\newcommand*{\@old@slash}{}\let\@old@slash\slash
\def\slash{\relax\ifmmode\delimiter"502F30E\mathopen{}\else\@old@slash\fi}
\def\bign#1{\mathclose{\hbox{$\left#1\vbox to8.5\p@{}\right.\n@space$}}\mathopen{}}
\def\Bign#1{\mathclose{\hbox{$\left#1\vbox to11.5\p@{}\right.\n@space$}}\mathopen{}}
\def\biggn#1{\mathclose{\hbox{$\left#1\vbox to14.5\p@{}\right.\n@space$}}\mathopen{}}
\def\Biggn#1{\mathclose{\hbox{$\left#1\vbox to17.5\p@{}\right.\n@space$}}\mathopen{}}
\def\acts{\mathrel{\reflectbox{$\righttoleftarrow$}}}
\DeclareMathOperator*{\nablaop}{\nabla}
\newcommand{\nW} {\ensuremath{\overset{\textnormal{can}}{\nablaop}}}
\newcommand{\ncan} {\ensuremath{\overset{\textnormal{can}}{\nablaop}}}
\newcommand{\nN} {\ensuremath{\overset{N}{\nablaop}}}
\newcommand{\C} {\ensuremath{\mathbb{C}}}
\newcommand{\R} {\ensuremath{\mathbb{R}}}
\newcommand{\M} {\ensuremath{\mathbb{M}}}
\newcommand{\V} {\ensuremath{\mathbb{V}}}
\newcommand{\G} {\ensuremath{\mathbb{G}}}
\newcommand{\MB} {\ensuremath{\mathbb{M}_{\textnormal{B}}}}
\newcommand{\MDol} {\ensuremath{\mathbb{M}_{\textnormal{Dol}}}}
\newcommand{\RB} {\ensuremath{\mathbb{R}_{\textnormal{B}}}}
\newcommand{\F} {\ensuremath{\mathbf{F}}}
\newcommand{\scrH} {\ensuremath{\mathscr{H}}}
\newcommand{\scrR} {\ensuremath{\mathscr{R}}}
\newcommand{\h} {\ensuremath{\mathfrak{h}}}
\newcommand{\g} {\ensuremath{\mathfrak{g}}}
\renewcommand{\k} {\ensuremath{\mathfrak{k}}}
\newcommand{\p} {\ensuremath{\mathfrak{p}}}
\renewcommand{\v} {\ensuremath{\mathfrak{v}}}
\newcommand{\w} {\ensuremath{\mathfrak{w}}}
\newcommand{\hk} {\ensuremath{\mathfrak{h^k}}}
\newcommand{\hp} {\ensuremath{\mathfrak{h^p}}}
\newcommand{\ku} {\ensuremath{\mathfrak{k_u}}}
\newcommand{\pu} {\ensuremath{\mathfrak{p_u}}}
\renewcommand{\a} {\ensuremath{\mathfrak{a}}}
\renewcommand{\u} {\ensuremath{\mathfrak{u}}}
\newcommand{\gl} {\ensuremath{\mathfrak{gl}}}
\newcommand{\calE} {\ensuremath{\mathcal{E}}}
\newcommand{\calJ} {\ensuremath{\mathcal{J}}}
\newcommand{\calM} {\ensuremath{\mathcal{M}}}
\newcommand{\calA} {\ensuremath{\mathcal{A}}}
\newcommand{\calC} {\ensuremath{\mathcal{C}}}
\newcommand{\calW} {\ensuremath{\mathcal{W}}}
\newcommand{\calH} {\ensuremath{\mathcal{H}}}
\newcommand{\calV} {\ensuremath{\mathcal{V}}}
\newcommand{\into} {\ensuremath{\hookrightarrow}}
\newcommand{\de} {\ensuremath{\textnormal{d}}}
\newcommand{\dist} {\ensuremath{\textnormal{dist}}}
\newcommand{\Dcan} {\ensuremath{{D^{\textnormal{can}}}}}
\newcommand{\Dpb} {\ensuremath{{D^{\textnormal{pb}}}}}
\newcommand{\Dalpha} {\ensuremath{{D^{\alpha}}}}
\newcommand{\dcan} {\ensuremath{{\textnormal{d}^{\textnormal{can}}}}}
\newcommand{\Ad} {\ensuremath{\textnormal{Ad}}}
\newcommand{\ad} {\ensuremath{\textnormal{ad}}}
\newcommand{\Image} {\ensuremath{\textnormal{Image}}}
\newcommand{\SL} {\ensuremath{\textnormal{SL}}}
\newcommand{\GL} {\ensuremath{\textnormal{GL}}}
\newcommand{\trace} {\ensuremath{\textnormal{trace}}}
\newcommand{\End} {\ensuremath{\textnormal{End}}}
\newcommand{\Hom} {\ensuremath{\textnormal{Hom}}}
\newcommand{\Pic} {\ensuremath{\textnormal{Pic}}}
\newcommand{\can} {\ensuremath{\textnormal{can}}}
\newcommand{\std} {\ensuremath{\textnormal{std}}}
\newcommand{\PSL} {\ensuremath{\mathbb{P}\SL}}
\renewcommand{\Re} {\ensuremath{\mathcal{R}e}}
\renewcommand{\Im} {\ensuremath{\mathcal{I}m}}
\newcommand{\Vol} {\ensuremath{\textnormal{Vol}}}
\newcommand{\tM} {\ensuremath{\tilde{M}}}
\newcommand{\tX} {\ensuremath{\tilde{X}}}
\newcommand{\tx} {\ensuremath{\tilde{x}}}
\newcommand{\talpha} {\ensuremath{{\tilde\alpha}}}
\newcommand{\tbeta} {\ensuremath{{\tilde\beta}}}
\newcommand{\tomega} {\ensuremath{{\tilde\omega}}}
\newcommand{\nt} {\ensuremath{{\tilde\nabla}}}
\newcommand{\piTN} {\ensuremath{{\vartheta_{TN}}}}
\newcommand{\rhotfirst} {\ensuremath{{\rho_t^{(1)}}}}
\newcommand{\rhotsecond} {\ensuremath{\rho_t^{(2)}}}
\newcommand{\tilderhotfirst} {\ensuremath{\tilde\rho_t^{(1)}}}
\newcommand{\tilderhotsecond} {\ensuremath{\tilde\rho_t^{(2)}}}
\newcommand{\piJN} {\ensuremath{{\vartheta_{J^2N}}}}
\newcommand{\alphap} {\ensuremath{{\alpha^{[\p]}}}}
\newcommand{\alphak} {\ensuremath{{\alpha^{[\k]}}}}
\newcommand{\omegap} {\ensuremath{{\omega^{[\p]}}}}
\newcommand{\omegak} {\ensuremath{{\omega^{[\k]}}}}
\newcommand{\Fp} {\ensuremath{{F^{[\p]}}}}
\newcommand{\Fk} {\ensuremath{{F^{[\k]}}}}
\newcommand{\xip} {\ensuremath{{\xi^{\p}}}}
\newcommand{\xik} {\ensuremath{{\xi^{\k}}}}
\newcommand{\Cinfty} {\ensuremath{\calC^\infty}}
\newcommand{\covt} {\ensuremath{\frac{D}{\partial t}}}
\newcommand{\dev}[2] {\ensuremath{\frac{\partial{#1}}{\partial{#2}}}}
\newcommand{\devd}[2] {\ensuremath{\frac{\partial^2{#1}}{\partial{#2}^2}}}
\newcommand{\scal}[2] {\ensuremath{\langle #1, #2 \rangle}}
\newcommand{\bigscal}[2] {\ensuremath{\big\langle #1, #2 \big\rangle}}
\newcommand{\Bigscal}[2] {\ensuremath{\Big\langle #1, #2 \Big\rangle}}
\newcommand{\bigscalC}[2] {\ensuremath{\big\langle #1, #2 \big\rangle_{\C}}}
\newcommand{\BigscalC}[2] {\ensuremath{\Big\langle #1, #2 \big\rangle_{\C}}}
\newcommand{\normC}[1] {\ensuremath{\|#1\|_{\C}^2}}
\newcommand{\bignormC}[1] {\ensuremath{\big\|#1\big\|_{\C}^2}}
\newcommand{\xiup} {\ensuremath{\xi_1^p}}
\newcommand{\xidp} {\ensuremath{\xi_2^p}}
\newcommand{\etaup} {\ensuremath{\eta_1^p}}
\newcommand{\etadp} {\ensuremath{\eta_2^p}}
\newcommand{\GIT} {\ensuremath{/\!/}}
\newcommand{\cont}{\mathbin{\raisebox{\depth}{\scalebox{1}[-1]{$\lnot$}}}}
\newcommand*{\textrightarrow}[1]{\xrightarrow{\mathmakebox[1.5em]{#1}}}
\theoremstyle{plain}
\newtheorem{theorem}{Theorem}[section]
\newtheorem{lemma}[theorem]{Lemma}
\newtheorem{prop}[theorem]{Proposition}
\newtheorem{cor}[theorem]{Corollary}
\newtheorem{theoremintro}{Theorem}
\newtheorem*{theorem*}{Theorem}
\newtheorem*{lemma*}{Lemma}
\newtheorem*{prop*}{Proposition}
\newtheorem*{cor*}{Corollary}
\theoremstyle{remark}
\newtheorem{remark}[theorem]{Remark}
\newtheorem*{remark*}{Remark}
\theoremstyle{definition}
\newtheorem{example}[theorem]{Example}
\newtheorem{defn}[theorem]{Definition}
\newtheorem{notation}[theorem]{Notation}
\newtheorem*{defn*}{Definition}
\title[Deformations of twisted harmonic maps]{Deformations of twisted harmonic maps and variation of the energy}
\author{Marco ~Spinaci}
\address{Institut Fourier, Grenoble.}
\subjclass[2010]{53C43; 14D07; 32G13}
\date{\today}
\begin{document}

\begin{abstract}
  We study the deformations of twisted harmonic maps $f$ with respect to the representation $\rho$. After constructing a continuous ``universal'' twisted harmonic map, we give a construction of every first order deformation of $f$ in terms of Hodge theory; we apply this result to the moduli space of reductive representations of a K\"ahler group, to show that the critical points of the energy functional $E$ coincide with the monodromy representations of polarized complex variations of Hodge structure. We then proceed to second order deformations, where obstructions arise; we investigate the existence of such deformations, and give a method for constructing them, as well. Applying this to the energy functional as above, we prove (for every finitely presented group) that the energy functional is a potential for the K\"ahler form of the ``Betti'' moduli space; assuming furthermore that the group is K\"ahler, we study the eigenvalues of the Hessian of $E$ at critical points.
\end{abstract}

\maketitle

\section*{Introduction}

Harmonic maps have a long history which dates back at least to 1964, when Eells and Sampson \cite{EeSa64} proved the existence of a harmonic representative in every homotopy class of maps between compact manifolds of appropriate curvature. Precise results about uniqueness and the variation of the energy have followed, in \cite{Ha67} and \cite{Ma73}, respectively. It became evident that harmonic maps enjoy especially good properties if one supposes in addition the starting manifold $X$ to be K\"ahler; this is resumed in the Siu-Sampson Bochner's formula, \cite{Si80,Sa86}, which implies that the harmonic map is in fact pluriharmonic.

While these concepts were perfectioned, Hitchin and Donaldson \cite{Hi87,Do87} constructed the moduli space of Higgs bundles over a Riemann surface $\Sigma$ and proved it to be homeomorphic to the moduli space of representations of (a central extension of) the fundamental group of $\Sigma$. Thanks to the existence theorem for twisted harmonic maps proved by Corlette \cite{Co88}, Simpson \cite{Si92,Si94} was able to extend the results to higher dimensional projective manifolds $X$: The harmonic metric constructed by Corlette gives a homeomorphism between the moduli space $\MB(X, G)$ of reductive representations of $\Gamma = \pi_1(X, x_0)$ into $G$ and the moduli space $\MDol(X, G)$ of $G$-Higgs bundles (which are assumed polystable and with some vanishing of the Chern classes). Our purpose in this paper is to study the infinitesimal behavior of the harmonic mapping with respect to the parameter $\rho \in \Hom(\Gamma, G)$; we apply this analysis to the infinitesimal study of the energy functional, which is defined on $\MDol(X, G)$ as the squared $L^2$-norm of the Higgs field $\theta$, and has so far been used intensively to study the topology of the moduli spaces in the case of a Riemann surface $\Sigma$ (cfr. \cite{Hi87}, \cite{BrGPGo06} and the references therein).

Let $M$ be a closed, orientable Riemannian manifold and $\Gamma = \pi_1(M, x_0)$; the manifold will be denoted by $X$ if we further suppose it to be K\"ahler. Let $G = \G(\R)$ be a reductive linear group, $K < G$ be maximal compact, and $N = G/K$. If $\rho_0 \colon \Gamma \to G$ is a representation, we shall identify metrics on $(\calV, D) = \big((\tM \times \R^r)/\Gamma, \de\big)$ (where $\de$ denotes the usual flat derivation) with $\rho_0$-equivariant maps $f \colon \tM \to N$, where $\tM\to M$ is the universal cover. Then, $\de f$ naturally identifies with a $\g$-valued 1-form $\beta$, such that $D = \Dcan + \beta$ is the decomposition into metric and self-adjoint parts (cfr. Proposition \ref{prop:phls}). Recall that $f$ is harmonic if and only if $\Dcan^*\beta = 0$, and in this case, if $M = X$ is K\"ahler, then $\beta = \theta+\theta^*$ and $(\calV, \theta)$ gives a Higgs bundle (with the holomorphic structure associated to the $(1,0)$-part of $\Dcan$).

We start by proving the existence of a continuous family of harmonic metrics, and the continuity of the energy functional (which, at reductive representations, where a harmonic metric $f$ exists, is half the squared norm of $\de f$). Fix a point $\tx_0 \in \tM$. Then Corlette's theorem \cite{Co88} grants the existence of a well defined map
$$
\scrH \colon Y \times \tM \to N,
$$
where $Y \subseteq N \times \Hom(\Gamma, G)$, such that $\scrH(n, \rho, \cdot)$ is the unique $\rho$-equivariant harmonic map with $\scrH(n, \rho, \tx_0) = n$. We then prove (cfr. Proposition \ref{prop:Hcontinuous}) that $Y$ is closed and $\scrH$ is a continuous map. Then, showing that the energy of a representation equals the energy of its semisimplification, we conclude that the energy functional is continuous on the whole of $\Hom(\Gamma, G)$ (Proposition \ref{prop:Econtinuous}).

The local study we carry through goes as follows: There are natural definitions of infinitesimal deformations of a representation $\rho_0$, induced by the group structures of $TG = \G(\R[t]/(t^2))$ and $J^2G = \G(\R[t]/(t^3))$, which can be rephrased as 1-cocycles in group cohomology. Analogously, deformations of a map $f \colon \tM \to N$ are naturally sections of the pull-back bundle $f^*TN$. We introduce the concepts of harmonic and equivariant deformations $(v, w)$ (with respect to a deformation of the representation $\rho_0$) in Definitions \ref{defn:vequivariant}, \ref{defn:vharmonic}, \ref{defn:wequivariant} and \ref{defn:wharmonic}. We investigate the existence of such deformations, aiming to give a way to construct them. This is completely done in the first order case, and we prove:
\begin{theoremintro}\label{thmintro:A}
 Denote by $c$ the 1-cocycle corresponding to a first order deformation of $\rho_0$, and by $\{c\} \in H^1(\Gamma, \g) \cong H^1(M, \Ad(\rho_0))$ the corresponding cohomology class (where $\Ad(\rho_0)$ is the local system on $M$ of fiber $\g$). Let $\omega \in \calH^1(M, \Ad(\rho_0))$ be its harmonic representative. Take any $F \colon \tM \to \g$ such that $\de F = \omega$ and that $F(\gamma \tx) = \Ad_{\rho_0(\gamma)}F(\tx) + c(\gamma)$ and project it via the natural map $N \times \g \to TN$. Then, we obtain a first order deformation $v$, which is harmonic and $(\rho_0, c)$-equivariant; all such deformations are obtained in this way.
\end{theoremintro}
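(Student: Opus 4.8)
The plan is to realise every such $v$ as generated by a single $\g$-valued potential $F$ through the infinitesimal action of $G$ on $N$. Write $\mu\colon N\times\g\to TN$, $(n,\xi)\mapsto\mu(n,\xi)\in T_nN$, for the map sending $\xi$ to the value at $n$ of the Killing field it generates; this is precisely ``the natural map $N\times\g\to TN$'' of the statement. It is surjective with kernel the isotropy subalgebra $\k_n$ at $n$, it is additive in $\xi$, and it is $G$-equivariant, $\mu(\rho_0(\gamma)\cdot n,\Ad_{\rho_0(\gamma)}\xi)=\rho_0(\gamma)_*\,\mu(n,\xi)$. The deformation produced by the construction is then $v(\tx)=\mu\big(f(\tx),F(\tx)\big)$. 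First I would record that $F$ exists: since $\omega\in\calH^1(M,\Ad(\rho_0))$ is in particular $\de^{\Ad}$-closed, its pull-back to the simply connected $\tM$ is $\de$-exact, so $\de F=\omega$ for some $F\colon\tM\to\g$; the de Rham--group cohomology dictionary identifies the monodromy defect $F(\gamma\tx)-\Ad_{\rho_0(\gamma)}F(\tx)$ with a cocycle in the class $\{c\}$, and after correcting $F$ by a suitable constant in $\g$ (i.e.\ by a coboundary) this cocycle becomes $c$ on the nose.

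Equivariance of $v$ is then a direct algebraic check. Applying $\mu$ to the hypothesis $F(\gamma\tx)=\Ad_{\rho_0(\gamma)}F(\tx)+c(\gamma)$ at the point $f(\gamma\tx)=\rho_0(\gamma)\cdot f(\tx)$, and using additivity of $\mu$ in the second variable together with its $G$-equivariance, one obtains $v(\gamma\tx)=\rho_0(\gamma)_*\,v(\tx)+\mu\big(f(\gamma\tx),c(\gamma)\big)$, which is exactly the relation of Definition~\ref{defn:vequivariant}. Thus the construction automatically lands among the $(\rho_0,c)$-equivariant deformations, independently of any harmonicity.

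The heart of the matter is harmonicity. Geometrically, $g_t:=\exp(tF)$ is a family of gauge transformations of the flat bundle $(\calV,\de)$, and since the decomposition $\de=\Dcan+\beta$ into metric and self-adjoint parts and the codifferential $(\Dcan)^*$ all transform covariantly, the maps $f_t:=g_t\cdot f$ are harmonic for the deformed flat connections $\de_t:=g_t\,\de\,g_t^{-1}$, whose monodromy is a path $\rho_t$ with $\dot\rho_0=c$; so $v=\dot f_t$ is a genuine infinitesimal deformation through harmonic maps. What remains — and what I expect to be the main obstacle — is to match this with the precise differential condition of Definition~\ref{defn:vharmonic}, which must be a Hodge (Coulomb) gauge-fixing rather than merely the Jacobi equation, since otherwise the exact part of $\omega$ (the infinitesimal gauge directions) would survive and the space of harmonic deformations would not be finite dimensional. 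Concretely I would linearise $(\Dcan_t)^*\beta_t=0$ simultaneously in the metric direction ($v$) and the connection direction ($\omega=\dot{\de_t}$), express $\dot{\Dcan}$ and $\dot\beta$ through the $\k$- and $\p$-components of $\omega$ relative to the Cartan involution $\theta_f$ attached to the metric, and, using the symmetric-space formula for the covariant derivative of a Killing field, collapse Definition~\ref{defn:vharmonic} to the single equation $(\de^{\Ad})^*\omega=0$. Since $\omega$ is the \emph{harmonic} representative this holds, and $v$ is harmonic; this step is where the geometry of $N$ and the choice of harmonic (not merely closed) representative genuinely enter.

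For the converse, that every harmonic $(\rho_0,c)$-equivariant $v$ arises in this way, I would invert the construction. Using that $\mu_{f(\tx)}$ restricts to an isomorphism $\p_{f(\tx)}\cong T_{f(\tx)}N$, lift $v$ canonically to a $\p_f$-valued function; the equivariance of $v$ then forces this lift to satisfy the required transformation law modulo the isotropy $\k_f$, and correcting the $\k$-component so that $F$ becomes genuinely $(\rho_0,c)$-equivariant yields a closed $\Ad(\rho_0)$-valued $1$-form $\omega:=\de F$ in the class $\{c\}$. Finally the harmonicity hypothesis on $v$, fed back through the computation of the previous paragraph, forces $(\de^{\Ad})^*\omega=0$; hence in the Hodge decomposition of $\omega$ the exact part vanishes, $\omega$ is the harmonic representative, and $v$ coincides with the deformation built from it. The delicate bookkeeping here is the existence and uniqueness of the $\k$-correction and the fact that the harmonicity of $v$ rules out the exact (infinitesimal gauge) part of $\omega$ — precisely the orthogonality of harmonic forms to exact forms.
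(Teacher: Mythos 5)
Your skeleton (produce an equivariant primitive $F$ of $\omega$ via simple connectivity of $\tM$ plus a constant cocycle correction, check equivariance of $v=\piTN(f,F)$ algebraically, reduce harmonicity of $v$ to the vanishing of $\de^*\omega$, and invert the construction for surjectivity) matches the paper's (Lemma \ref{lemma:existence} and Theorem \ref{thm:firstorder}). However, two of your steps have genuine gaps. The ``geometric'' justification of harmonicity is false: the maps $f_t=\exp(tF)\cdot f$ are \emph{not} harmonic maps $\tM\to N$, because conjugating by $g_t$ replaces the trivial flat connection on $\tM\times\R^n$ by the gauge-equivalent but different connection $\de-(\de g_t)g_t^{-1}$, and harmonicity of a metric is covariant only for the pair (metric, connection). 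If your claim were true, \emph{any} equivariant primitive of \emph{any} closed representative of $\{c\}$ would yield a harmonic $v$, contradicting the theorem, which singles out the harmonic representative and makes the fibre of the construction the finite-dimensional $\hp$. Relatedly, Definition \ref{defn:vharmonic} \emph{is} just the Jacobi equation $\calJ(v)=0$, not a Coulomb gauge condition; what kills the exact part is Proposition \ref{prop:phls}: $\beta_N\circ\calJ=J\circ\beta_N$ with $J=\de^*\de$ on the adjoint phls, $J$ preserves the splitting $[\k]\oplus[\p]$, hence $\beta_N(\calJ(v))=J(F)^{[\p]}=(\de^*\de F)^{[\p]}$, which vanishes when $\de F=\omega$ is the harmonic representative. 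Your proposed linearisation, if actually carried out, amounts to rederiving this identity, so the forward direction is reparable.

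The converse, as you state it, does not close. Harmonicity of $v$ controls only the $[\p]$-component of $\de^*\de F$: for an \emph{arbitrary} equivariant $[\k]$-valued correction $Q$ of the lift $P=\beta_N(v)$, you get $\big(\de^*\de(P+Q)\big)^{[\p]}=J(P)=\beta_N(\calJ(v))=0$, but $J(Q)$ need not vanish, so $\de(P+Q)$ is in general \emph{not} the harmonic representative; your assertion that harmonicity of $v$ ``forces $(\de^{\Ad})^*\omega=0$'' therefore fails for a generic choice of correction. One must either first invoke the existence of a good $F_0$ with $\de F_0=\omega$ and compare (the paper's route: $\xi=\beta_N(v)-F_0^{[\p]}$ is a global section of $[\p]\subset\calV$ over $M$ with $J(\xi)=0$, and integration by parts over the \emph{compact} $M$ gives $\de\xi=0$, i.e.\ $\xi\in\hp$, so $F=F_0+\xi$ realises $v$), or run the same integration by parts on the $[\p]$-part of the discrepancy $h$ in $\de F=\omega+\de h$ to show $h^{[\p]}\in\hp$ and then correct $F$ by $h-h^{[\p]}$. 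Either way, compactness of $M$ and the identity $J=\de^*\de$ enter essentially, and your sketch omits both.
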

Thanks to this result, we are able to express the first variation of the energy at $\rho_0$: Along the deformation determined by $\omega$, it becomes (Proposition \ref{prop:firstvariation}):
\begin{equation}\label{eqn:firstvariationintro}
\dev{E_t}{t}\Big|_{t=0} = \int_M \bigscal{\omega}{\beta} \de \Vol.
\end{equation}

In his paper \cite{Hi87}, Hitchin investigated the case of rank $n$ degree $d$ vector bundles on a Riemann surface $M = \Sigma$ with $(n, d) = 1$. This forces the moduli space to be smooth and projective; he then proves the energy functional $E$ to be a moment map for the $S^1$-action $t \cdot (\calE, \theta) = (\calE, t \theta)$, which gives a matching between critical points of $E$ and fixed points of the action (these are in turn the so-called ``complex variations of Hodge structures'', and their ubiquity makes them intensively studied - cfr. \cite{Si92} for a definition and proof of the ubiquity). Using \eqref{eqn:firstvariationintro}, we can prove the following:
\begin{theoremintro}\label{thmintro:B}
 The critical points of $E$ are exactly the representations induced by polarized complex variations of Hodge structure.
\end{theoremintro}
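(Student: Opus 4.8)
The plan is to use the first-variation formula \eqref{eqn:firstvariationintro} to characterize critical points, and then to match that vanishing condition with the defining condition for a complex variation of Hodge structure (VHS). Recall that, at a reductive $\rho_0$, the energy equals $\tfrac12\|\de f\|^2_{L^2} = \tfrac12\|\beta\|^2_{L^2}$, and by Theorem~\ref{thmintro:A} every first order deformation of $\rho_0$ is realized by a harmonic representative $\omega \in \calH^1(M, \Ad(\rho_0))$. Thus $\rho_0$ is a critical point of $E$ if and only if

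$$
\int_M \bigscal{\omega}{\beta} \de \Vol = 0 \qquad \text{for every } \omega \in \calH^1(M, \Ad(\rho_0)).
$$

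First I would reinterpret this pairing Hodge-theoretically. Since $M = X$ is Kähler and $f$ is harmonic, $\beta = \theta + \theta^*$ and $\beta$ itself is a harmonic $1$-form with values in $\Ad(\rho_0)$, so it lies in $\calH^1(X, \Ad(\rho_0))$. The integral above is then the $L^2$-inner product of $\omega$ against $\beta$ on the space of harmonic forms. Since $\omega$ ranges over \emph{all} of $\calH^1$, the condition that the pairing vanish identically forces $\beta$ to be $L^2$-orthogonal to every harmonic representative; but $\beta$ is itself harmonic, so this is equivalent to $\beta = 0$ in cohomology, i.e. the class $[\beta] \in H^1(X, \Ad(\rho_0))$ vanishes. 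The subtlety I must handle carefully is that the pairing $\scal{\cdot}{\cdot}$ is the real (symmetric) inner product on $\g$-valued forms, and $\beta$ is the \emph{self-adjoint} part of $D$; I would verify that the decomposition $\g = \k \oplus \p$ respects the pairing so that $\beta \in \calH^1(X, \Ad(\rho_0))$ is genuinely a harmonic class whose vanishing is detected by the full space of test forms $\omega$.

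The remaining, and most substantial, step is to translate ``$[\beta] = 0$ in $H^1$'' into the VHS condition. A polarized complex VHS corresponds on the Higgs side to a fixed point of the $\C^*$-action $t\cdot(\calE, \theta) = (\calE, t\theta)$, equivalently to a system of Hodge bundles $\calE = \bigoplus_p \calE^p$ with $\theta \colon \calE^p \to \calE^{p-1}\otimes \Omega^1_X$ strictly shifting the grading. I would argue that the vanishing of the cohomology class of $\beta = \theta + \theta^*$ is, by the standard Higgs-bundle deformation complex, equivalent to $\rho_0$ admitting such a grading: the $\C^*$-action infinitesimally moves $\rho_0$ in a direction governed by a class built from $\theta$, and this class vanishes precisely when $\rho_0$ is fixed by the action, i.e. is a VHS. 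The hard part will be making this last identification precise without re-deriving Simpson's correspondence: I expect to invoke the description of the $\C^*$-fixed points as complex VHS (Simpson \cite{Si92}) and to match the infinitesimal generator of the $\C^*$-action at $\rho_0$ with the deformation class pairing against $\beta$, so that criticality of $E$, fixedness under $\C^*$, and the VHS structure all coincide. I would close by noting the analogy with Hitchin's moment-map picture on a Riemann surface, of which \eqref{eqn:firstvariationintro} is the higher-dimensional infinitesimal counterpart.
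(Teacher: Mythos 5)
There is a genuine gap, and it sits at the very first step of your argument. You claim that $\beta$ is itself a harmonic $1$-form with values in $\Ad(\rho_0)$, hence lies in $\calH^1(X, \Ad(\rho_0))$, so that orthogonality to all harmonic test forms forces $[\beta]=0$. But $\beta$ is \emph{not} closed for the flat differential: by Proposition \ref{prop:phls}\eqref{item:beta} it satisfies the Maurer--Cartan equation $\de\beta = [\beta,\beta]$, which is nonzero whenever $\theta$ does not commute with $\theta^*$. Harmonicity of $f$ gives only $\de^*\beta = 0$, not $\de\beta = 0$, so $\beta \notin \calH^1(X,\Ad(\rho_0))$ and the orthogonality argument does not apply. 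Moreover, even if it did, your conclusion would prove far too much: a harmonic form orthogonal to all of $\calH^1$ vanishes identically, so you would deduce $\beta = \theta + \theta^* = 0$, i.e.\ that the critical points are exactly the unitary representations. That contradicts the statement being proved, since polarized $\C$-VHS with nonzero Higgs field (e.g.\ uniformizing representations of Riemann surfaces) are critical points with $\beta \neq 0$. The subsequent step, translating ``$[\beta]=0$'' into the VHS condition, is therefore built on a false premise, and as written it is also only a sketch (``I expect to invoke\dots'').

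The paper's proof avoids this by never treating $\beta$ as a cohomology class. For the ``only if'' direction it tests criticality against one specific direction: the tangent to the $\C^*$-action $(\calV, t\theta)$, whose class in the $D''$-complex is represented by $\theta$ itself, with harmonic representative $\omega = \theta + D''\eta$. The pairing $\int_X\scal{\omega}{\theta+\theta^*}_\C$ then computes (using harmonicity of $\omega$, Stokes, and $\partial\theta^*=0$) to $\|\omega\|_{L^2}^2$, so criticality forces $\omega = 0$, i.e.\ $\theta = -[\theta,\eta]$; the one-parameter group $g_t=\exp(t\eta)$ then gives $(\calV,\theta)\cong(\calV,e^t\theta)$, exhibiting $\rho_0$ as a fixed point of the $\C^*$-action, hence a $\C$-VHS. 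For the ``if'' direction --- essentially absent from your proposal --- one writes $\beta = D^c\gamma$ where $\gamma$ is the infinitesimal generator of the circle action, and integrates by parts: $\int_X\bigscal{\omega}{D^c\gamma}\de\Vol = \int_X\bigscal{D^{c,*}\omega}{\gamma}\de\Vol = 0$ for every harmonic $\omega$ by the Kähler identities. You would need to adopt both of these mechanisms; the moment-map analogy you gesture at is the right intuition, but the actual identification goes through the $D''$-cohomology class of $\theta$, not through a flat cohomology class of $\beta$.
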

Here for critical points we mean those such that $\dev{E_t}{t}\big|_{t=0} = 0$ along all directions $c \in Z^1(\Gamma, \g)$; at smooth points, this coincides with the usual definition. Actually, the proof of Theorem \ref{thmintro:B} gives that the vanishing of the first derivative of the energy of $(\calV, t\theta)$ in $t = 1$ is sufficient.

The study of second order deformation is made harder by the presence of obstructions. It is well known (see \cite{GoMi87}) that the obstruction for a first order deformation of $\rho_0$ to be extended to the second order lies in the cohomology class of $[\omega, \omega]$, i.e. for unobstructed $\omega$ there must exist an $\Ad(\rho_0)$-valued 1-form $\psi$ on $M$ such that
\begin{equation}\label{eqn:obstructionintro}
 \de \psi = - [\omega, \omega].
\end{equation}
We have to ask such a $\psi$ to satisfy one more equation in order to assure the existence of a deformation of the harmonic metric to the second order, but this actually grants a little more:
\begin{theoremintro}\label{thmintro:C}
 Suppose that $G$ is a complex group. Fix a reductive representation $\rho_0 \colon \Gamma \to G$, a harmonic metric $f \colon \tM \to G/K$ and a harmonic 1-form $\omega \in \calH^1(M, \Ad(\rho_0))$ such that $\{[\omega, \omega]\} = 0 \in H^2(M, \Ad(\rho_0))$. Denote by $(\rho_0, c)$ the first order deformation of $\rho_0$ this defines. Then the following are equivalent:
 \begin{enumerate}
  \item There exists an $\Ad(\rho_0)$-valued 1-form $\psi$ satisfying both \eqref{eqn:obstructionintro} and $\de^*\psi = - \sum_j [\omega(E_j)^*, \omega(E_j)]$, where $\{E_j\}$ is a local orthonormal frame and $*$ denotes adjunction with respect to the harmonic metric;
  \item The harmonic 1-form $\omega$ is a minimum of the $L^2$-norm in its own orbit in $\calH^1(M, \Ad(\rho_0))$ under the adjoint action of $H = Z_G(\Image(\rho_0))$;
  \item There exists a map $(F, F_2) \colon \tM \to \g \times \g$ which is both equivariant and of harmonic type (cfr. Definitions \ref{defn:wequivariant}, \ref{defn:wharmonic});
  \item There exist two second order deformations $(v, w)$ and $(v', w')$ of $f$, both harmonic, one equivariant along (some second order extension of) $(\rho_0, c)$ and the other along $(\rho_0, ic)$.
 \end{enumerate}
 Furthermore, any of the points above is true for \emph{every} harmonic metric $f$ if and only if $H^0(M, \Ad(\rho_0, c))$ is a flat $\R[t]/(t^2)$-module (here, $\Ad(\rho_0, c) = \Ad(\rho_0 + tc)$ is the adjoint local system with fiber $\g \otimes \R[t]/(t^2)$).
\end{theoremintro}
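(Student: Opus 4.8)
The plan is to run the chain of equivalences by first isolating the purely Hodge-theoretic content, which ties (1) to (2), and then linking these analytic statements to the geometric deformations in (3) and (4) through the second order expansion of the equivariance and harmonic-map equations. I would begin with \emph{Step (1) $\Leftrightarrow$ (2)}, which I expect to be the cleanest. Decomposing a hypothetical solution as $\psi = \de\alpha + \de^*\gamma + \psi_h$ with $\psi_h$ harmonic, the two equations decouple into $\de\de^*\gamma = -[\omega,\omega]$ and $\de^*\de\alpha = g$, where $g := -\sum_j[\omega(E_j)^*,\omega(E_j)]$. The first is solvable exactly because the hypothesis $\{[\omega,\omega]\}=0$ makes the (automatically closed) form $[\omega,\omega]$ exact, hence in the image of $\de\de^*$; the second is solvable if and only if $g$ is $L^2$-orthogonal to the harmonic $0$-forms $\calH^0(M,\Ad(\rho_0)) = \h$, where $\h = \Lie(H)$. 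Using invariance of the pairing and the identity $\ad_\xi^* = \ad_{\xi^*}$ for the metric induced by $f$, one checks that the $L^2$ pairing satisfies $\scal{g}{\xi} = -\scal{[\xi,\omega]}{\omega}$ for every $\xi\in\h$; thus $g\perp\h$ says precisely that $\omega$ is a critical point of the $L^2$-norm along its $H$-orbit. Since $\rho_0$ is reductive, $H = Z_G(\Image(\rho_0))$ is reductive and the norm square is the Kempf--Ness function of the linear $H$-action on $\calH^1$, for which critical points are automatically global minima in the orbit; this upgrades criticality to the minimum statement in (2).

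\emph{Step (1) $\Leftrightarrow$ (3).} Here I would expand the equivariance and harmonic-type conditions of Definitions \ref{defn:wequivariant} and \ref{defn:wharmonic} to second order. The first order datum is fixed by Theorem \ref{thmintro:A}: $\de F = \omega$ and $F$ transforms by $(\rho_0,c)$. Writing the jet as $(F,F_2)$ and extracting the coefficient of $t^2$, the equivariance relation (governed by the group law of $J^2G$) forces the differential of the $\g$-valued field built from $F_2$ to absorb the Goldman--Millson quadratic term $[\omega,\omega]$ \cite{GoMi87}, while the harmonic-type condition contributes the codifferential equation with right hand side $g$. Identifying $\psi$ with the self adjoint part of the second order field determined by $F_2$, the simultaneous solvability of $\de\psi = -[\omega,\omega]$ and $\de^*\psi = g$ is exactly the solvability of the second order system, i.e. condition (1); conversely, a $\psi$ as in (1) integrates to an $F_2$ as $\omega$ integrated to $F$. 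This step is the main obstacle: one must verify that the field determined by $F_2$ reproduces \emph{precisely} the two equations for $\psi$, with the correct right hand side $g$ rather than merely up to a harmonic ambiguity.

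\emph{Step (3) $\Leftrightarrow$ (4).} The passage from the $\g\times\g$-valued jet $(F,F_2)$ to honest second order deformations of $f$ into $N = G/K$ is via the second order analogue of the projection $N\times\g\to TN$ from Theorem \ref{thmintro:A}: a map that is equivariant and of harmonic type projects to a harmonic equivariant deformation, reversibly on the relevant components. The hypothesis that $G$ is complex enters because $\g$ is a complex vector space, so both $c$ and $ic$ are cocycles, with harmonic representatives $\omega$ and $i\omega$. Since the adjunction $*$ is conjugate linear, one has $[(i\omega)^*,i\omega]=[\omega^*,\omega]$ while $[i\omega,i\omega]=-[\omega,\omega]$; hence the system attached to $ic$ has the \emph{same} right hand side $g$ in the codifferential equation and only the opposite sign in the $[\omega,\omega]$ equation. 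As exactness is insensitive to this sign, the solvability criterion of (1) is identical for $c$ and for $ic$, which is exactly why the single complex object $(F,F_2)$ of (3) is equivalent to the simultaneous existence of the two real deformations in (4), one along an extension of $(\rho_0,c)$ and one along $(\rho_0,ic)$.

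\emph{The uniformity clause.} By uniqueness the harmonic metrics for $\rho_0$ form a single $H$-orbit, and changing $f$ by $h\in H$ transports the minimisation problem of (2) to the point $\Ad_h\omega$. Hence the equivalent conditions hold for \emph{every} $f$ if and only if the $L^2$-norm is constant on the $H$-orbit of $\omega$, which for a linear action of the reductive group $H$ forces $[\xi,\omega]=0$ for all $\xi\in\h$. On the other hand, the short exact sequence of local systems $0\to\Ad(\rho_0)\xrightarrow{\cdot t}\Ad(\rho_0,c)\to\Ad(\rho_0)\to0$ gives a connecting homomorphism $\partial\colon\h = H^0(M,\Ad(\rho_0))\to H^1(M,\Ad(\rho_0))$ which is cup product with the class of $c$, represented by $\xi\mapsto[\omega,\xi]$; since $[\omega,\xi]$ is harmonic whenever $\xi$ is parallel, $\partial = 0$ if and only if $[\omega,\xi]=0$ for all $\xi\in\h$. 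The reduction $H^0(M,\Ad(\rho_0,c))\to\h$ is therefore surjective, i.e. $H^0(M,\Ad(\rho_0,c))$ is free (equivalently flat) over $\R[t]/(t^2)$, exactly when $[\xi,\omega]=0$ for all $\xi\in\h$, matching the condition extracted from (2). This closes the argument, the genuinely delicate point remaining the precise second order computation underlying Step (1) $\Leftrightarrow$ (3).
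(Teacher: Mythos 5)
Your steps (1)$\Leftrightarrow$(2) and the uniformity clause are essentially sound and track the paper closely: the pairing computation and the critical-point reformulation are Proposition \ref{prop:obstructionF2} (items \eqref{item:soldepsi}--\eqref{item:criticalpoint}), the Kempf--Ness upgrade from ``critical'' to ``minimum'' is the paper's moment-map remark after that proposition, and your connecting-homomorphism analysis of $H^0(M,\Ad(\rhotfirst))$ is Lemma \ref{lemma:hp} plus the flatness computation in the final Proposition of Section \ref{sec:secondorder}. (Your direct Hodge decomposition of $\psi$, decoupling the two equations, is in fact a slight streamlining of the paper, which only solves the system through the construction of $(F,F_2)$.) One caveat there: ``constant $L^2$-norm on the orbit of a reductive group forces $[\xi,\omega]=0$ for all $\xi\in\h$'' is false for a merely real reductive group --- compact directions act by isometries and impose nothing; the paper needs the extra step $H=(H\cap K)\cdot H'$ together with the fact that $\h'$ is a \emph{complex} reductive subalgebra (so $\hp=i\hk$) to conclude $\h=\h'$.

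The genuine gap is your step (3)$\Leftrightarrow$(4), specifically (4)$\Rightarrow$(3). You assert that the second order projection $\piJN$ is invertible ``on the relevant components,'' so that each of the two harmonic equivariant deformations lifts to a jet $(F,F_2)$ of harmonic type; but that lifting is precisely what is not available. The paper carefully distinguishes ``deformable'' from ``$\C$-deformable'' (Definition \ref{defn:deformability}) and explicitly states that it does not know whether a single deformable $f$ must be $\C$-deformable --- if your reversibility held, one deformation alone would yield (3), and the two-direction hypothesis in (4) would be redundant. Your sign computation ($[i\omega,i\omega]=-[\omega,\omega]$, $(i\omega)^*\cont(i\omega)=\omega^*\cont\omega$) only proves that condition (1) for $c$ and for $ic$ are \emph{equivalent to each other}; it does not extract (1) from the pair $(v,w)$, $(\tilde v,\tilde w)$. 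The missing idea is the explicit combination in Proposition \ref{prop:complexobstructions}: setting $-\eta = \beta_N(w) + 2i\big[\beta_N(\tilde v),\beta_N(v)\big] + \beta_N(\tilde w)$ one verifies $J(\eta) = 2\,\omega^*\cont\omega$, so that $\omega^*\cont\omega \in \Image(J) = \h^\perp$, which is condition \eqref{item:orthogonality} of Proposition \ref{prop:obstructionF2}; no lifting of either deformation is needed. Secondarily, your (1)$\Rightarrow$(3) ``integration'' of $\psi$ to $F_2$ is only a sketch and omits the cohomological normalization that makes it work: the paper does not integrate an arbitrary solution $\psi$, but starts from $\omega_2^0$ with $\omega+t\omega_2^0$ a closed $\Ad(\rhotfirst)$-valued form representing an actual second order cocycle $c+tk$ (whose existence uses Goldman--Millson), sets $\psi^0=\omega_2^0-[F^0,\omega]$, corrects by $\de\eta$ with $J(\eta)=-\omega^*\cont\omega-\de^*\psi^0$, and only then invokes Lemma \ref{lemma:existence}; without matching the class in $H^1(M,\Ad(\rhotfirst))$ to a genuine extension $c+tk$, the required equivariance of $F_2$ is not guaranteed.
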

If any of the conditions of the theorem is satisfied, then every second order harmonic map is obtained by projection of $(F, F_2)$, similarly to the first order picture. As in the first order case, we obtain a formula for the variation of the energy along deformations constructed by the means of the theorem:
\begin{equation}\label{eqn:secondvariationintro}
\devd{E_t}{t}\Big|_{t=0} = \int_M \bigscal{\psi}{\beta} + \big\|\omegap \big\|^2 \de \Vol,
\end{equation}
where $\omegap_{\tx}$ is the projection of $\omega_{\tx}$ on the subspace $[\p]_{\tx}$ of $\g$ consisting of selfadjoint elements.

Again, we want to make use of this result to prove the analog second order statements as those Hitchin proved on a Riemann surface. Namely, Hitchin \cite{Hi87} proves the energy functional $E$ to be a K\"ahler potential with respect to the complex structure given by $\MB(\Sigma, G)$, and also that it is a perfect Bott-Morse function. In \cite{Hi92}, then, he gives a formula for the eigenvalues of the Hessian of $E$ at a fixed point as a function of the eigenvalues of the infinitesimal generator of the action of $S^1$ which allow for example \cite{BrGPGo03} and \cite{GPGoMR13} to study the topology of $\MB(\Sigma, G)$ for some classes of group $G$. In the lines of the former result, we prove:
\begin{theoremintro}\label{thmintro:D}
 Let $G$ be a complex group. At the smooth points of $\MB(M, G)$, the energy functional $E$ is a Kähler potential for the Betti complex structure on $\MB(M, G)$.
\end{theoremintro}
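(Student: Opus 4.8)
The plan is to verify the defining property of a Kähler potential in the form
\[
\de\de^c E = \omega_I \qquad (\text{up to a universal constant}),
\]
where $\omega_I$ is the Kähler form of the Betti complex structure $I$ on the smooth locus of $\MB(M,G)$. Since $G$ is complex, at a smooth point $[\rho_0]$ the structure $I$ acts on $T_{[\rho_0]}\MB(M,G)\cong\calH^1(M,\Ad(\rho_0))$ simply as multiplication by $i$ on the coefficients $\g$, and the natural Kähler metric is the $L^2$-metric $g_I(\omega,\omega')=\int_M\bigscal{\omega}{\omega'}\de\Vol$ on harmonic representatives. Being a Kähler potential is then equivalent to the pointwise identity that the $I$-invariant part $\tfrac12\big(\mathrm{Hess}_E(\cdot,\cdot)+\mathrm{Hess}_E(I\cdot,I\cdot)\big)$ of the real Hessian of $E$ equals $\tfrac12\,g_I$; this is what I shall establish.

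First I would fix two harmonic classes $\omega,\omega'\in\calH^1(M,\Ad(\rho_0))$ representing tangent vectors $\xi,\xi'$ and polarize the second variation formula \eqref{eqn:secondvariationintro} into a symmetric bilinear expression
\[
\mathrm{Hess}_E(\xi,\xi') = \int_M \bigscal{\psi(\xi,\xi')}{\beta}\,\de\Vol + \int_M \bigscal{\omega^{[\p]}}{(\omega')^{[\p]}}\,\de\Vol ,
\]
where $\psi(\xi,\xi')$ is the polarization of the form $\psi$ of Theorem \ref{thmintro:C}, determined by $\de\psi(\xi,\xi')=-[\omega,\omega']-[\omega',\omega]$ and the corresponding $\de^*$-equation, normalized to have vanishing harmonic part. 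Because smoothness of $[\rho_0]$ forces every first order deformation to be unobstructed, the same formula is available for the rotated directions $i\xi,i\xi'$, whose harmonic representatives are $i\omega,i\omega'$.

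Then I would assemble the $I$-invariant combination and treat its two pieces separately. For the algebraic piece, the key point is that multiplication by $i$ is an isometry of $\g$ interchanging the skew part $[\k]$ and the self-adjoint part $[\p]$; hence $(i\omega)^{[\p]}=i\,\omega^{[\k]}$ and $\int_M\bigscal{(i\omega)^{[\p]}}{(i\omega')^{[\p]}}=\int_M\bigscal{\omega^{[\k]}}{(\omega')^{[\k]}}$, so adding the two contributions and using $[\k]\perp[\p]$ collapses the algebraic term precisely to $\tfrac12\int_M\bigscal{\omega}{\omega'}\de\Vol=\tfrac12\,g_I(\xi,\xi')$, the Kähler metric. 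For the obstruction piece, I would exploit that the bracket is complex bilinear while adjunction is conjugate linear: thus $[i\omega,i\omega']=-[\omega,\omega']$ flips the sign of the coexact part of $\psi(i\xi,i\xi')$ relative to $\psi(\xi,\xi')$, whereas the $\de^*$-equation is built from the combinations $\omega(E_j)^*$ and is therefore insensitive to the rotation, leaving the exact parts equal. With harmonic parts normalized away, $\psi(\xi,\xi')+\psi(i\xi,i\xi')$ is exact, and since $\beta=\theta+\theta^*$ is coclosed at a harmonic metric, integration by parts gives $\int_M\bigscal{\psi(\xi,\xi')+\psi(i\xi,i\xi')}{\beta}=0$. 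The obstruction piece drops out and the $I$-invariant Hessian equals $\tfrac12\,g_I$, which is the desired potential identity.

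I expect the main obstacle to be exactly the control of the obstruction term $\int_M\bigscal{\psi}{\beta}$: one must check that the canonical second order deformations underlying \eqref{eqn:secondvariationintro} are compatible with $I$, so that replacing $\xi$ by $i\xi$ genuinely replaces $\psi(\xi,\xi)$ by $\psi(i\xi,i\xi)$ with the sign behaviour above, and that the harmonic-part normalization is the same for both directions. A related subtlety is the passage from the second variation along these canonical paths to the true complex Hessian $i\partial\bar\partial E$: the acceleration of the paths contributes a first order term $\de E$ evaluated on the second order data, which is again bundled into $\int_M\bigscal{\psi}{\beta}$ via \eqref{eqn:obstructionintro}, so the cancellation above simultaneously removes this path-dependence. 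Finally I would record the universal constant relating $\de\de^c E$ to $\omega_I$ and observe that positivity of $\tfrac12\,g_I$ recovers the plurisubharmonicity of $E$ of Proposition \ref{prop:psh}.
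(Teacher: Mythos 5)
Your proposal is correct and follows essentially the same route as the paper's proof of Theorem \ref{thm:kahlerpotential}: reduce to showing $\big(\partial_t^2+(J_B\partial_t)^2\big)E=\int_M\|\omega\|^2$, observe that $(i\omega)^{[\p]}=i\omega^{[\k]}$ so the algebraic terms assemble to the full $L^2$-norm, and show that the two obstruction forms sum to an exact form (the paper realizes this concretely via the paired data $(F',F_2')=(iF,-F_2-\eta)$, giving $\psi'=-\psi-\de\eta$), which pairs to zero against the coclosed $\beta$. Your Hodge-decomposition phrasing of the cancellation and your polarized Hessian are only cosmetic variants of the paper's argument, and you correctly identify the compatibility of the two second-order extensions as the point requiring care.
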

A similar plurisubharmonicity result for the energy functional on the Teichmüller space has been recently proved by Toledo (see \cite{To12}). This work has been the original source of our interest in the question.

In order to extend Hitchin's result regarding the eigenvalues of the Hessian of $E$ (on a K\"ahler manifold $X$), we introduce the following notation, which is akin to that in e.g. \cite{Hi87}: We set $\dot{A} = (\omegak)''$ and $\dot{\Phi} = (\omegap)'$ (where $\alpha = \alpha' + \alpha''$ is the decomposition into $(1,0)$ and $(0,1)$ parts). Furthermore, at a point corresponding to a polarized variation of Hodge structure $\rho_0$, for each $\xi \in \g$, the Lie algebra of $G$, write $\xi = \sum_p \xi^{-p,p}$ for the decomposition according to the induced variation of Hodge structures of weight 0 on $\tX \times \g$, so that the infinitesimal generator of the circle action acts on $\xi^{-p,p}$ with weight $ip$.

\begin{theoremintro}\label{thmintro:E}
 Suppose that $\rho_0$ is induced by a polarized complex variation of Hodge structure. Then, with the above notations, the second derivative of the energy along a direction $\omega$ can be written as
 $$
  \devd{E_t}{t}\Big|_{t=0} = 2\int_X \sum_p \Big(-p \big\| \dot{A}^{-p,p} \big\|^2 + (1-p) \big\| \dot{\Phi}^{-p,p} \big\|^2\Big)\de \Vol.
 $$
\end{theoremintro}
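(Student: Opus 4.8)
The plan is to start from the second-variation formula \eqref{eqn:secondvariationintro} and to decompose every object appearing in it according to the weight grading furnished by the variation of Hodge structure. Recall that at a polarized complex VHS the local system $\Ad(\rho_0)$ carries a weight-zero Hodge structure $\g = \bigoplus_p \g^{-p,p}$, that $\Dcan$ is of weight $0$ while $\beta = \theta + \theta^*$ shifts the weight (with $\theta$ a $(1,0)$-form valued in $\g^{-1,1}$ and $\theta^*$ a $(0,1)$-form valued in $\g^{1,-1}$), and that the infinitesimal generator of the circle action is the grading endomorphism $H$, a $\Dcan$-parallel section of $\Ad(\rho_0)$ with $\ad_H = ip$ on $\g^{-p,p}$. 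I will also record the bookkeeping imposed by reality: since $\omegap$ is self-adjoint and $\omegak$ is anti-self-adjoint, their $(0,1)$- and $(1,0)$-parts are, up to sign, the adjoints of $\dot{\Phi} = (\omegap)'$ and $\dot{A} = (\omegak)''$, while the Hodge metric pairs $\g^{-p,p}$ with its conjugate $\g^{p,-p}$.

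The term $\int_X \|\omegap\|^2$ is the easy one: self-adjointness of $\omegap$ gives $(\omegap)'' = (\dot{\Phi})^*$, whence $\int_X \|\omegap\|^2 = 2\int_X \|\dot{\Phi}\|^2 = 2\int_X \sum_p \|\dot{\Phi}^{-p,p}\|^2$. This already accounts for the summand $+1$ in the coefficient $(1-p)$ attached to $\dot{\Phi}$, and contributes nothing to $\dot{A}$.

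The core of the argument is the term $\int_X \langle \psi, \beta\rangle$, which I claim equals $-2\int_X \sum_p p\,(\|\dot{A}^{-p,p}\|^2 + \|\dot{\Phi}^{-p,p}\|^2)$; here the VHS structure is essential. Because $\beta$ is of pure weight $\mp 1$, the pairing $\langle\psi,\beta\rangle$ only sees the weight-$(\pm 1)$ components of $\psi$, and these I would extract by solving the two defining equations $\de\psi = -[\omega,\omega]$ and $\de^*\psi = -\sum_j[\omega(E_j)^*, \omega(E_j)]$ one weight at a time, where $\de$ is the flat connection $\Dcan + \ad_\beta$ on $\Ad(\rho_0)$. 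On each Hodge component the Kähler identities at the VHS (relating $\partial, \bar\partial$ and $\theta, \theta^*$ with their adjoints), together with harmonicity $\de\omega = \de^*\omega = 0$, turn the inversion of $\de$ and $\de^*$ into an algebraic operation whose output carries the eigenvalue of $\ad_H$. Concretely I would use the identity $\de H = [\beta, H] = i(\theta - \theta^*)$ to integrate by parts against the parallel section $H$, converting $\int_X\langle\psi,\beta\rangle$ into the weighted $L^2$-norms above, the factor $p$ being exactly the eigenvalue by which $H$ acts on the weight-$p$ piece.

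Combining the two contributions yields
\[
\devd{E_t}{t}\Big|_{t=0}
= 2\int_X \sum_p \Big((1-p)\,\|\dot{\Phi}^{-p,p}\|^2 - p\,\|\dot{A}^{-p,p}\|^2\Big)\de\Vol,
\]
which is the asserted formula. The main obstacle is the explicit determination of $\psi$ in the previous step: one must verify that solving the coupled equations weight-by-weight and then pairing with $\beta$ reproduces \emph{exactly} the factor $p$, uniformly for both $\dot{A}$ and $\dot{\Phi}$, with no surviving cross-terms between distinct Hodge components. This is where the compatibility of the grading $H$ with the harmonic metric and the Kähler identities must be used with care; as a consistency check, the same coefficients are predicted by regarding $E$ as a moment map for the circle action (Theorem \ref{thmintro:D}), whose Hessian at the fixed point $\rho_0$ is governed by the weights $ip$ of the linearized action on $H^1(X, \Ad(\rho_0))$.
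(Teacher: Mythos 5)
Your opening moves are the right ones, and they coincide with the paper's: you start from \eqref{eqn:secondvariationintro}, you note that $\int_X\|\omegap\|^2 = 2\int_X\sum_p\|\dot{\Phi}^{-p,p}\|^2\de\Vol$ by self-adjointness of $\omegap$ and orthogonality of the Hodge decomposition, and your idea of pairing $\beta$ with the parallel grading section is exactly Lemma \ref{lemma:secondvariationenergyVHS}: since $\beta = D^c\gamma$ and $\Lambda$ annihilates $1$-forms, the K\"ahler identities give
$$
\int_X\bigscal{\psi}{\beta}\de\Vol = \int_X\bigscal{D^{c,*}\psi}{\gamma}\de\Vol = -\int_X\bigscal{\Lambda\,\de\psi}{\gamma}\de\Vol = \int_X\bigscal{\Lambda[\omega,\omega]}{\gamma}\de\Vol.
$$
Both of the intermediate identities you assert are in fact true, and they are arithmetically consistent with the target formula.

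The gap is in how you propose to extract the weighted norms from $\int_X\scal{\psi}{\beta}$. You say you would ``solve the two defining equations for $\psi$ one weight at a time'' and that the K\"ahler identities ``turn the inversion of $\de$ and $\de^*$ into an algebraic operation.'' This is not a workable plan as stated ($\psi$ is determined only up to the ambiguity of Lemma \ref{lemma:uniquenessFF2}, and inverting $\de$ is an elliptic, not algebraic, problem), and --- more importantly --- it is unnecessary: after the integration by parts above, $\psi$ has disappeared and you are left with the purely algebraic quantity $\int_X\scal{\Lambda[\omega,\omega]}{\gamma}$. What actually remains to be proved is
$$
\int_X\bigscal{\Lambda[\omega,\omega]}{\gamma}\de\Vol = -2\int_X\sum_p p\Big(\big\|\dot{A}^{-p,p}\big\|^2 + \big\|\dot{\Phi}^{-p,p}\big\|^2\Big)\de\Vol,
$$
and this is where essentially all of the work in the paper's proof lies: one writes $\omega(\dev{}{x_j}) = \xi_1+i\xi_2$, $\omega(\dev{}{y_j}) = \eta_1+i\eta_2$ with $\xi_k,\eta_k\in\u$, decomposes into the $[\g^{-p,p}]$, and checks that both sides reduce to $\int_X\sum_p(-1)^p4p\,\Im\big(\bigscalC{\xidp}{\etadp}-\bigscalC{\xiup}{\etaup}\big)\de\Vol$. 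The delicate point is precisely the one you defer: the pure-square terms carrying the factor $p$ cancel between $p$ and $-p$ by reality of the $\u$-components, while the cross terms $\Im\bigscalC{\cdot}{\cdot}$ survive only where multiplied by $p$. Since the coefficients $-p$ and $1-p$ are produced exactly in this verification, which you explicitly label ``the main obstacle'' and do not carry out, the proposal is a correct reduction with an acknowledged hole rather than a proof. (Note also that the theorem, as proved in the body, holds along $\C$-deformable directions, so that $\psi$ exists and \eqref{eqn:secondvariationintro} applies; this hypothesis should be stated.)
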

\begin{cor*}
 If we assume further that $\omega$ takes values in $\g_0$, then the expression simplifies in terms of the weight 1 $(P, Q)$ Deligne-Hodge structure on $H^1(M, \Ad(\rho_0))$ as:
 $$
  \devd{E_t}{t}\Big|_{t=0} = 2 \int_X \sum_{P \textnormal{ even}} P \big\| \omega^{(P,Q)} \big\|^2.
 $$
 In particular, if $\rho_0$ is of Hermitian symmetric type, then the Hessian is semi-positive definite, and the vanishing directions are exactly those that remain complex variations of Hodge structure to the first order.
\end{cor*}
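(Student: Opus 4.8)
The plan is to obtain the displayed identity by nothing more than rewriting the formula of Theorem~\ref{thmintro:E} in terms of the Hodge decomposition of $H^1(M,\Ad(\rho_0))$, and then to read off the two assertions about Hermitian symmetric type. First I would record the bookkeeping that governs the Hodge bigrading on harmonic $\g$-valued $1$-forms. Since $\rho_0$ underlies a polarized complex variation of Hodge structure, the coefficient system $\Ad(\rho_0)$ is a weight-$0$ variation $\g=\bigoplus_p\g^{-p,p}$, with the Cartan decomposition recovered as $\k_{\C}=\bigoplus_{p\ \textnormal{even}}\g^{-p,p}$ and $\p_{\C}=\bigoplus_{p\ \textnormal{odd}}\g^{-p,p}$; correspondingly $H^1$ carries the weight-$1$ Deligne--Hodge structure, and a pure-type harmonic representative of $H^{P,Q}$ (with $P+Q=1$) is the sum of a $(1,0)$-form valued in $\g^{P-1,1-P}$ and a $(0,1)$-form valued in $\g^{P,-P}$. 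I expect the hypothesis $\omega\in\g_0$ to be precisely what guarantees that the pointwise (non-flat) Hodge grading of the harmonic form $\omega$ splits it into pieces that are again harmonic and that realize this $(P,Q)$ decomposition as the genuine weight-$1$ Hodge structure, with its reality constraint $\overline{H^{P,Q}}=H^{Q,P}$, so that I may compute norms component by component.

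With this dictionary the key observation is that the two quantities appearing in Theorem~\ref{thmintro:E} are exactly the $P$-even Hodge pieces of $\omega$. Indeed $\dot A=(\omegak)''$ contributes only for $p$ even, where $\dot A^{-p,p}$ is the $(0,1)$-part valued in $\g^{-p,p}$ and hence has total type $(P,Q)=(-p,1+p)$ with $P=-p$ even; likewise $\dot\Phi=(\omegap)'$ contributes only for $p$ odd, where $\dot\Phi^{-p,p}$ is the $(1,0)$-part valued in $\g^{-p,p}$ and has type $(1-p,p)$ with $P=1-p$ even. Conversely, for each even $P$ the component $\omega^{(P,Q)}$ is the sum of $\dot\Phi^{P-1,1-P}$ and $\dot A^{P,-P}$, and these are orthogonal (different form types, and $\k\perp\p$), so $\|\omega^{(P,Q)}\|^2=\|\dot\Phi^{P-1,1-P}\|^2+\|\dot A^{P,-P}\|^2$. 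Substituting $P=1-p$ in the first summand and $P=-p$ in the second turns Theorem~\ref{thmintro:E} into
\[
 \devd{E_t}{t}\Big|_{t=0}=2\int_X\Big(\sum_{p\ \textnormal{odd}}(1-p)\|\dot\Phi^{-p,p}\|^2+\sum_{p\ \textnormal{even}}(-p)\|\dot A^{-p,p}\|^2\Big)\de\Vol=2\int_X\sum_{P\ \textnormal{even}}P\,\|\omega^{(P,Q)}\|^2,
\]
which is the claimed formula.

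For the Hermitian symmetric case I would use that this hypothesis is equivalent to $\g^{-p,p}=0$ for $|p|\geq2$, so that $\p_{\C}=\g^{-1,1}\oplus\g^{1,-1}$ and $\k_{\C}=\g^{0,0}$. Then the only Hodge types occurring in $H^1$ are $(2,-1),(1,0),(0,1),(-1,2)$, of which the even-$P$ ones are $(0,1)$, with coefficient $P=0$, and $(2,-1)$, with coefficient $P=2$. Hence the second variation reduces to $4\int_X\|\omega^{(2,-1)}\|^2\de\Vol\geq0$, proving semipositivity, and it vanishes exactly when the $(2,-1)$-component of $\omega$ vanishes. Finally I would identify this vanishing with remaining a variation of Hodge structure to first order: the $(2,-1)$-part is the unique Hodge component of $H^1$ lying in $F^2$, i.e. the one measuring the failure of Griffiths transversality of the infinitesimal period map, so its vanishing is exactly the condition that the deformation class be tangent to the locus of complex variations of Hodge structure.

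The main obstacle I anticipate is not the reindexing, which is essentially combinatorial, but the two Hodge-theoretic inputs. First, I must justify that under the hypothesis $\omega\in\g_0$ the pointwise Hodge grading of the harmonic form is compatible with harmonicity (a principle-of-two-types statement for polarized variations of Hodge structure) and genuinely realizes the Deligne--Hodge structure on $H^1$, so that the componentwise norm computation is legitimate. Second, I must pin down the precise characterization of ``remaining a complex variation of Hodge structure to first order'' as the vanishing of the $(2,-1)$-component, which rests on the infinitesimal deformation theory of variations of Hodge structure and on Griffiths transversality.
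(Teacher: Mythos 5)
Your proposal is correct and follows essentially the same route as the paper: its Corollary \ref{cor:variationG0} is likewise obtained by reindexing the formula of Theorem \ref{thm:hessian} through the weight-1 Deligne--Hodge $(P,Q)$-bigrading, using the reality of $\omega$ (i.e.\ $\omega \in \calA^1(\g_0)$ with $\g_0 \subseteq \u$) to place $\dot{A}$ in the even and $\dot{\Phi}$ in the odd Hodge components, and the Hermitian symmetric case likewise collapses to $4\int_X \big\|(\omega')^{1,-1}\big\|^2\de\Vol = 4\int_X\big\|\omega^{(2,-1)}\big\|^2\de\Vol$. The one point where the paper is more concrete than your sketch is the identification of the vanishing directions: rather than appealing to Griffiths transversality in general, it writes $\omega(\partial_j)=\partial_j F+[\theta(\partial_j),F]$, notes that the bracket term cannot reach $[\g^{1,-1}]$, and concludes that $\omega^{(2,-1)}=\partial F^{1,-1}$, whose vanishing is exactly its stated definition of being a $\C$-VHS to first order.
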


\subsection*{Organization of the paper}
 In Section \ref{sec:phls}, we introduce the notion of ``polarized harmonic local systems'' as local systems underlying harmonic bundles with a compatible involution. We prove a number of results about them, which will be needed in the following sections. Although we only apply such results in a specific class of examples (the pull-back of the ``adjoint ones'' on symmetric spaces), we state them in general. In Section \ref{sec:universal} we construct the universal twisted harmonic mapping $\scrH$, then prove its continuity and the one of the energy functional $E$.
 
 The infinitesimal study begins in Section \ref{sec:firstorder}, where the concepts of first order deformations are introduced, and Theorem \ref{thmintro:A} is proved. These results are then applied in Section \ref{sec:firstvariation}, in which we prove the formula for the first variation of the energy and use it to obtain Theorem \ref{thmintro:B}. In Section \ref{sec:secondorder} we introduce all the necessary definitions regarding second order deformations, discuss the action of $H$ on $H^1(M, \Ad(\rho_0))$ and relate it with the existence of a pair $(F, F_2)$. This existence implies that of a second order deformations; in Example \ref{ex:obstruction} we present some instances where a second order harmonic and equivariant deformation cannot exist. The best part of the section is devoted to the proof of Theorem \ref{thmintro:C}. Finally, in Section \ref{sec:secondvariation}, we prove \eqref{eqn:secondvariationintro} which we then exploit to give proofs of Theorems \ref{thmintro:D} and \ref{thmintro:E}.

\subsection*{Acknowledgements}
This work is based on the author's Ph.D. thesis at Université Joseph Fourier (Grenoble), which is publicly available at \cite{thesis}. The author would like to thank his advisor, Philippe Eyssidieux, for introducing him to the subject and for his invaluable support; the referees of my Ph.D. thesis, Olivier Biquard and Domingo Toledo, for their useful comments; Pierre Py and Alessandro Carlotto for useful comments and discussions.

\section{Polarized harmonic local systems}\label{sec:phls}
 \begin{defn}[\cite{Si92}]
  Let $M$ be a connected Riemannian manifold, $\tM \to M$ its universal cover. A \emph{harmonic bundle} is a real flat vector bundle $(\calV, D)$ of rank $n$ with a metric $h$ such that the associated map $f \colon \tM \to \GL(n, \R)/O(n)$ is harmonic.
 \end{defn}
 Here, the map $f$ is defined from $h$ by choosing a base point $x_0 \in M$ and an isomorphism $\calV \cong \tM \times_\Gamma \R^n$, with $\Gamma = \pi_1(M, x_0)$, so as to  identify sections $v, w$ with $\Gamma$-equivariant maps $\tM \to \R^n$. Then let $s \colon \tM \to G$ be such that:
 \begin{equation}\label{eqn:defnmetric}
 h(v, w)_{\tx} = \bigscal{s(\tx)^{-1} \cdot v}{s(\tx)^{-1} \cdot w}_{\std}
 \end{equation}
 where the scalar product $\scal{\cdot}{\cdot}_{\std}$ is the standard scalar product on $\R^n$. Although the definition on $s$ involves choices, the composition $f \colon \tM \xrightarrow{s} G \to G/K$ is well defined, and its harmonicity is independent of the choices of $x_0$ and of the isomorphism.
 \begin{defn}
  A (real, even) \emph{polarized harmonic local system} (phls for short) is a triple $(\V, \sigma, S)$ such that $\V$ is the local system of parallel sections of a flat bundle $(\calV, D)$, $\sigma \colon \calV \to \calV$ is an $\R$-linear involution, $S$ is a flat symmetric non-degenerate quadratic form, which is positive definite on the $+1$-eigenspace $\calV^+$ of $\sigma$ and negative definite on the $-1$-eigenspace $\calV^-$ and such that the positive-definite metric defined by
  $$
  h(v, w) = S\big(v, \sigma(w)\big)
  $$
  makes $(\calV, D)$ a harmonic bundle.
 \end{defn}

 \begin{remark}
  In both definitions, one can define complex objects by considering hermitian quadratic forms, $\C$-vector bundles, $\C$-linear involutions, etc.; also, we can consider real odd polarized harmonic local systems by considering a symplectic form $Q$ instead of a symmetric one (then it will induce a hermitian form on the complexification, hence, together with $\sigma$, a metric therein, which may be asked to be harmonic). In \cite{Si92}, only the complex setting is analyzed, hence the definitions are different from the ones above. In this section, we will always develop the theory for a real even phls, the straightforward adaptations to the remaining cases are left to the reader.
 \end{remark}
 Given a real polarized harmonic local system, its complexification bears a corresponding complex structure; furthermore, tensor products and duals (hence, endomorphisms) are defined naturally.
 
 \begin{defn}[Cfr. e.g. \cite{BuRa90}]
  Let $N = G/K$ be a Riemannian symmetric space of the non-compact type, denote by $\g$ the Lie algebra of $G$, by $\k$ that of $K$ and write $\g = \k \oplus \p$ for the symmetric decomposition. The Maurer-Cartan form $\beta_N \in \calA^1_N(\g)$ is the right inverse of $\piTN \colon N \times \g \to TN$ defined by
  $$
  \piTN(n, \xi) = \dev{}{t} \Big( \exp(t \xi) \cdot n\Big)\Big|_{t=0}.
  $$
 \end{defn}
 This 1-form gives an isomorphism at every point $\beta_{N,n} \colon T_nN \cong [\p]_n = \Ad_n(\p)$.

 \begin{defn}
  Let $(\calV, \sigma, S)$ be a polarized harmonic local system on $M$. We define the \emph{canonical connection} $\Dcan$ as the metric part of the flat connection $D$, and write
  $$
  D = \Dcan + \beta,
  $$
  so that $\beta$ is a 1-form on $M$ taking values in the selfadjoint part of $\End(\calV)$. We denote by $\de$, $\dcan$ the exterior differential operators determined by the connections $D$, $\Dcan$, respectively and by $\ncan_X$ the covariant derivation by $\Dcan$ along a vector field $X$.
 \end{defn}
 
 The rest of this section is devoted to proving the following facts, which we regroup in a proposition:
 \begin{prop}\label{prop:phls}
  Let $(\V, \sigma, S)$ be a polarized harmonic local system on the flat bundle $(\calV, D)$. Then:
  \begin{enumerate}
   \item \label{item:beta} The pull-back of $\beta$ to $\tM$ coincides with the pull-back of $\beta_N$ through the metric $f \colon \tM \to N = G/K$, where $G/K$ is any totally geodesic subspace of $\GL(n,\R)/O(n)$ in which $f$ takes values. In particular, it satisfies the ``Maurer-Cartan equation'':
   \begin{equation}\label{eqn:maurercartan}
   \de \beta = [\beta, \beta].
   \end{equation}
   \item \label{item:Dcan} The canonical connection $\Dcan$ commutes with $\sigma$, so that, for every section $v$ of $\calV$, writing $v^+$ and $v^-$ for its projections on $\calV^+$ and $\calV^-$, respectively,
   $$
   \Dcan(v^+) = \Dcan(v)^+ \quad \text{and} \quad \Dcan(v^-) = \Dcan(v)^-.
   $$
   In particular, since $\beta$ excanges $\calV^+$ and $\calV^-$,
   $$
   \Dcan(v) = \big(D v^+\big)^+ + \big(D v^-\big)^-.
   $$
   \item \label{item:codifferential} Let $\alpha$ be a $\calV$-valued 1-form. Suppose that $M$ be compact and orientable. Then the codifferential $\de^*\alpha$ may be computed (in terms of a local orthonormal frame $\{E_j\}$ of $M$) as
   \begin{equation}\label{eqn:codifferential}
   \de^*\alpha = \dcan^* \alpha + \sum_{j} \beta(E_j) \cdot \alpha(E_j) = -\sum_j \ncan_{E_j} \alpha(E_j) - \beta(E_j) \cdot \alpha(E_j).
   \end{equation}
   \item \label{item:laplacian} Let $v$ be a section of $\calV$. Then the Laplacian $\Delta v = \de^*\de v$ can be computed in terms of $\Delta^\can v = \dcan^* \dcan v = -\sum_j(\ncan_{E_j} \ncan_{E_j} v)$ and a local orthonormal frame $\{E_j\}$ as
   \begin{equation}\label{eqn:laplacian}
   \Delta v = J(v) \overset{\textnormal{def}}{=} \Delta^{\can} v + \sum_{j} \beta(E_j) \cdot \big(\beta(E_j) \cdot v\big)
   \end{equation}
   (the operator $J$ will be called the \emph{Jacobi operator}).
   \item \label{item:globalsections} Denote by $V$ the vector space of global sections of $\V$ (i.e. flat global sections of $\calV$). Then $\sigma$ leaves $V$ invariant, so that we can write $V = V^+ \oplus V^-$.
  \end{enumerate}
 \end{prop}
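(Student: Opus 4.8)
The plan is to avoid a direct pointwise computation (which in fact fails) and instead characterize the global flat sections spectrally, as the kernel of the Laplacian, and then show that $\sigma$ commutes with that Laplacian. Let me first record why the naive approach does not work: for a flat section $v$ we have $\Dcan v = -\beta\cdot v$, and using \ref{item:Dcan} together with the fact (from \ref{item:beta}) that $\beta$ exchanges $\calV^+$ and $\calV^-$ — equivalently $\sigma\,\beta(E) = -\beta(E)\,\sigma$ for every $E$ — one computes
$D(\sigma v) = \sigma(\Dcan v) + \beta\cdot(\sigma v) = -\sigma(\beta\cdot v) - \sigma(\beta\cdot v) = -2\,\sigma(\beta\cdot v)$,
which need not vanish. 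So a genuinely global (integral) input is required, and this is exactly where the standing hypothesis that $M$ is compact and orientable (as in \ref{item:codifferential}) enters; indeed the statement is false on non-compact $M$, where the harmonic map, hence $\sigma$, may vary nontrivially even for a trivial local system.

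The first step is the identification $V = \Ker\Delta$ on smooth sections of $\calV$. Since $\de v = Dv$ for a section, every flat section is harmonic; conversely, as $\de^*$ is the $h$-adjoint of $\de$, integration by parts over the compact orientable $M$ gives $\scal{\Delta v}{v}_{L^2} = \|\de v\|_{L^2}^2$, so $\Delta v = 0$ forces $Dv = \de v = 0$. Thus the global flat sections are precisely the harmonic ones.

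The second, and main, step is to show that the Jacobi operator $J = \Delta$ of \ref{item:laplacian} commutes with $\sigma$. The zeroth-order term is easy: from $\sigma\,\beta(E_j) = -\beta(E_j)\,\sigma$ each $\beta(E_j)^2$ commutes with $\sigma$, hence so does $\sum_j \beta(E_j)^2$. For $\Delta^{\can} = \dcan^*\dcan$ I would use \ref{item:Dcan}: $\sigma$ is $\Dcan$-parallel, i.e. $\dcan\,\sigma = \sigma\,\dcan$. Because $h$ is symmetric, $\sigma$ is $S$-self-adjoint, hence an isometry of $h$ and in particular $h$-self-adjoint; taking $h$-adjoints of $\dcan\,\sigma = \sigma\,\dcan$ then yields $\dcan^*\,\sigma = \sigma\,\dcan^*$, and therefore $\Delta^{\can}\sigma = \sigma\Delta^{\can}$. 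Combining the two contributions gives $J\sigma = \sigma J$.

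The conclusion is then immediate: since $\sigma$ commutes with $J = \Delta$, it preserves $\Ker\Delta = V$; being an involution, $\sigma$ splits $V$ into its $(\pm 1)$-eigenspaces $V^{\pm} = V \cap \calV^{\pm}$, so $V = V^+ \oplus V^-$. I expect the only delicate point to be the second step — specifically transferring the commutation relation $\dcan\,\sigma = \sigma\,\dcan$ to the adjoint $\dcan^*$ — which rests essentially on $\sigma$ being an isometry of $S$ (equivalently of $h$); the remaining verifications are formal.
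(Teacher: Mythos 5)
Your proposal proves only point \eqref{item:globalsections} of the proposition. Points \eqref{item:beta}--\eqref{item:laplacian} are invoked as known facts (you use \eqref{item:beta} for $\sigma\beta(E)=-\beta(E)\sigma$, \eqref{item:Dcan} for $\dcan\sigma=\sigma\dcan$, and \eqref{item:laplacian} for the formula $J=\Delta^{\can}+\sum_j\beta(E_j)^2$), but they are themselves the bulk of the statement and of the paper's proof: establishing \eqref{item:beta} and \eqref{item:Dcan} requires identifying $\Dcan$ with the pull-back connection $\Dpb=D-f^*\beta_N$ via the auxiliary connection $\Dalpha$ and the identification of $\End(\calV)^{\pm}$ with the (anti-)selfadjoint endomorphisms (Lemmas \ref{lemma:keyfact}, \ref{lemma:propphls1}, \ref{lemma:propphls2}), and \eqref{item:laplacian} requires the harmonicity of $f$ to kill the first-order term $\sum_j\ncan_{E_j}(\beta(E_j))\cdot v$. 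As a proof of the full proposition the proposal is therefore incomplete; as a proof of \eqref{item:globalsections} conditional on the earlier points, it is correct.

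For point \eqref{item:globalsections} itself your route is genuinely different from the paper's and worth comparing. The paper integrates $\scal{Jv}{v}$ for a flat section $v$ and, using selfadjointness of $\beta(E_j)$, obtains $0=\int_M\|\Dcan v\|^2+\|\beta\cdot v\|^2\,\de\Vol$; positivity forces the \emph{pointwise} vanishings $\Dcan v=0$ and $\beta\cdot v=0$, whence $D(v^+)=\Dcan(v)^++(\beta\cdot v)^-=0$. You instead identify $V=\Ker\Delta$ (integration by parts, so compactness is equally essential) and check $[\sigma,J]=0$ term by term: $\sigma\beta(E_j)^2=\beta(E_j)^2\sigma$ from the anticommutation, and $\sigma\Delta^{\can}=\Delta^{\can}\sigma$ because $\sigma$ is $\Dcan$-parallel and $h$-selfadjoint (your verification that symmetry of $h$ forces $\sigma$ to be an $S$- and $h$-isometry is the right justification, and one can equally argue pointwise from $\Delta^{\can}v=-\sum_j\ncan_{E_j}\ncan_{E_j}v$ without passing through adjoints). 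Both arguments are sound. The one thing your version does not deliver is the pair of pointwise identities $\Dcan v=0$ and $\beta\cdot v=0$ for $v\in\h=H^0(M,\Ad(\rho_0))$, which the paper extracts for free from its proof and reuses later (e.g.\ in Section \ref{sec:secondorder}, to show that the adjoint action of $\h$ preserves harmonic forms); if you adopt the spectral argument you should still record those identities separately. Your opening remark that the naive computation gives $D(\sigma v)=-2\sigma(\beta\cdot v)$ is accurate, but note that the paper's proof shows this quantity does in fact vanish -- the pointwise approach does not so much ``fail'' as require exactly the global input you then supply.
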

 \begin{notation}\label{not:G0}
  In the following, we will fix a base point $x_0 \in M$ and an isomorphism $\calV \cong \tM \times_\Gamma \R^n$, so that we also have a monodromy representation $\rho \colon \Gamma = \pi_1(M, x_0) \to \GL(n, \C)$. The map $f$ is then $\Gamma$-equivariant, where $\Gamma$ acts on $N$ through $\rho$. We shall denote by $G_0$ the Zariski closure of $\Image(\rho)$ ($G_0$ is called the monodromy group); by Corlette's theorem \cite{Co88}, $G_0$ is reductive. Because of the same theorem, there is at least one harmonic metric $f_0 \colon \tM \to G_0/K_0 \subseteq \GL(n,\R)/O(n)$. We will always denote by $s_0\colon \tM \to G_0$ one of its lifts. We denote by $H = Z_G(G_0)$ the centralizer of $G_0$ in $G$. Then, by the uniqueness part of Corlette's theorem, if $f \colon \tM \to G/K$ is any harmonic metric, $f = h \cdot f_0$, for some $h \in H$. Of course, changing the metric the phls structure changes accordingly (with obvious notations, $S$ remains the same, but the involutions are related by $\calV^+ = h \cdot \calV^{+_0}$).
 \end{notation}
 
 \begin{example}\label{ex:adjoint}
  The main example we are interested in is the \emph{adjoint polarized harmonic local system}. Let $G$ be the group of real points of a reductive algebraic group, $N = G/K$ the associated symmetric space, and consider $\calV_{\ad} = N \times \g$ as a bundle on $N$, with the trivial flat connection. Put on it the following structure: Over a point $n \in N$, the involution is the Cartan involution having as $+1$-eigenspace $[\k]_n = \Ad_n(\k)$ and as $-1$-eigenspace $[\p]_n = \Ad_n(\p)$; writing $\g = \g^{ss} \oplus \a$ for a decomposition into a semisimple ideal and the center, the metric on $\a$ is just any fixed positive definite metric while on $\g^{ss}$ it is induced by taking as symmetric form $S$ the Killing form. The resulting metric $h$ corresponds to a harmonic map $N \into \GL(\g)/O(\g) \times \a^\p$. This is in fact the totally geodesic embedding corresponding to the adjoint action $G \to \GL(\g)$. In this case, $\beta = \ad(\beta_N)$ and the ``canonical connection'' corresponds, via $\piTN$, to the usual one (i.e. the Levi-Civita connection associated to any invariant metric on $N$): This will follow from Proposition \ref{prop:phls}, as the Maurer-Cartan equation \eqref{eqn:maurercartan} and the Jacobi identity for dgla's imply, for $\xi \colon N \to \g$,
  $$
  R^{\can}\xi = \big(\de - \ad(\beta)\big)^2 \xi = -\big[[\beta, \beta], \xi\big] + \big[\beta, [\beta, \xi]\big] = \frac{1}{2} \big[[\beta,\beta], \xi\big].
  $$
  Furthermore, if one writes $\tilde D = \Dcan - \ad(\beta)$, so that from \eqref{eqn:codifferential} one has $\de^* \alpha = -\trace(\tilde{D}\alpha)$, in this case the connection $\tilde{D}$ is flat, too.
  
  The main class of examples is constructed as follows: Taking any harmonic mapping to a symmetric space $f \colon \tM \to N = G/K$, we can pull-back the structure on $\tM$. If we start with a representation $\rho \colon \Gamma = \pi_1(M, x_0) \to G$ and $f$ is $\Gamma$-equivariant, we can quotient the structure to obtain a (real, even) phls on $\tM \times_\Gamma \g \to M$.
 \end{example}

\begin{example}
 The other main class of examples, when $M = X$ is a compact K\"ahler manifold, is provided by variations of Hodge structure (VHS for short, see \cite{Si92} \S 4). The complex ones give complex phls, while the real ones give even or odd real phls depending on the parity of the weight. To this aim, one simply disregards the Hodge decomposition, only considering as (the complexification of) $\calV^+$ (resp. $\calV^-$) the direct sum of $\calV^{p,q}$ for even (resp. odd) $p$. The harmonic metric, then, is induced by the period mapping.
\end{example}

 \begin{lemma}\label{lemma:keyfact}
  Let $(\V, \sigma, S)$ be a polarized local system and denote by $\g_0$ the Lie algebra of $G_0$. Consider the flat vector bundle $\calW_0 = \tM \times_\Gamma \g_0$. Then, the restrictions to $\calW_0$ of the two polarized local systems induced on $\tM \times_\Gamma \gl_n(\R)$ by $\End(\V)$ and by $f^*(\V_\ad)$ coincide.
 \end{lemma}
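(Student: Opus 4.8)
The plan is to unwind what ``coincide'' means for two polarized harmonic local systems carried by the same flat bundle $\calW := \tM \times_\Gamma \gl_n(\R)$: on the sub-bundle $\calW_0 = \tM \times_\Gamma \g_0$ one must match three pieces of data, the underlying local system (the flat connection), the involution, and the flat form $S$. First I would dispose of the local systems, which agree tautologically. In the flat trivialization $\calV|_{\tM} = \tM \times \R^n$, both $\End(\V)$ and $f^*(\V_\ad)$ become $\tM \times \gl_n(\R)$ with the $\Gamma$-action $\Ad_{\rho(\gamma)}$ and the trivial product connection: for $\End(\V)$ this is the connection induced by $D$, while for $f^*(\V_\ad)$ it is the pull-back along $f$ of the trivial connection on $N \times \g$ of Example \ref{ex:adjoint}, and $f$ is built from the \emph{same} trivialization (Notation \ref{not:G0}). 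So the whole content reduces to comparing the involution and the form after restriction to $\g_0$.

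The crux — and the reason the restriction to $\g_0$ is forced — is that $\g_0$ consists of $S$-skew endomorphisms. Indeed, $S$ is flat, so the monodromy preserves it, $\Image(\rho) \subseteq \O(S)$; passing to Zariski closures gives $G_0 \subseteq \O(S)$, hence $\g_0 \subseteq \mathfrak{o}(S)$, i.e.\ $\Phi^{\ast_S} = -\Phi$ for every $\Phi \in \g_0$, where $\ast_S$ denotes the $S$-adjoint. On the $\End(\V)$ side the involution is conjugation by $\sigma$, namely $\Phi \mapsto \sigma \Phi \sigma^{-1}$; on the $f^*(\V_\ad)$ side it is the pulled-back adjoint Cartan involution, which at $\tx$ is $+1$ on the $h_{\tx}$-skew endomorphisms $[\k]_{f(\tx)}$ and $-1$ on the $h_{\tx}$-self-adjoint ones $[\p]_{f(\tx)}$, i.e.\ $\Phi \mapsto -\Phi^{\ast_h}$ with $\ast_h$ the $h$-adjoint. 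I would then invoke the elementary bridge identity $\Phi^{\ast_h} = \sigma\, \Phi^{\ast_S}\, \sigma^{-1}$, immediate from $h = S(\cdot, \sigma(\cdot))$ and $\sigma^2 = \Id$. For $\Phi \in \g_0$ this yields $-\Phi^{\ast_h} = -\sigma \Phi^{\ast_S} \sigma^{-1} = \sigma \Phi \sigma^{-1}$, so the two involutions agree on $\g_0$; the same computation shows they \emph{disagree} off $\mathfrak{o}(S)$, which is exactly why $\calW_0$ appears. (Taking $f$ valued in $G_0/K_0$ as in Notation \ref{not:G0} also guarantees that this common involution preserves $\g_0$, so that the restricted data is a genuine sub-phls.)

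Finally I would match the forms. The form on $\End(\V) = \V \otimes \V^*$ is $S \otimes S^{\vee}$, and a direct computation shows that on $S$-skew elements it collapses to $-\trace(\Phi \Psi)$, the negative trace form of the standard representation; one checks its signature (positive on $[\k]$, negative on $[\p]$) is the one dictated by the common involution above, so no sign ambiguity remains. On the $f^*(\V_\ad)$ side the form is the pull-back of the invariant form of Example \ref{ex:adjoint}, which restricts on $\g_0$ to the same negative trace form, once the invariant form on $\V_\ad$ is taken in the normalization induced by the standard representation. The step I expect to be the main obstacle is the involution comparison: recognizing that the two operators ``conjugation by $\sigma$'' and ``negative $h$-adjoint'', which are genuinely different on $\gl_n(\R)$, are forced to coincide precisely on the $S$-skew subalgebra $\g_0$, via the identity $\Phi^{\ast_h} = \sigma \Phi^{\ast_S} \sigma^{-1}$ together with $G_0 \subseteq \O(S)$. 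Everything else is bookkeeping of the identifications.
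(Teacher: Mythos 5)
Your identification of the underlying local systems and your treatment of the involutions are correct, and the latter is a genuinely different route from the paper's. The paper never compares the two involutions directly: it first checks that the two \emph{metrics} agree on $\calW_0$ (reducing to $f_0$), then compares the flat forms via the uniqueness of invariant bilinear forms, and lets the involutions follow from the pair (form, metric). You instead prove the involution statement head-on, from $G_0\subseteq \O(S)$ (hence $\g_0\subseteq\mathfrak{o}(S)$, since $S$ is flat and therefore monodromy-invariant) together with the bridge identity $\Phi^{\ast_h}=\sigma\,\Phi^{\ast_S}\,\sigma$. This is clean, it correctly identifies the two involutions as ``conjugation by $\sigma$'' and ``negative $h$-adjoint'', and it isolates transparently why the restriction to $\calW_0$ is forced. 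Note also that once you know both $\sigma$ and $S$ agree on $\g_0$, the metrics agree automatically, so your package would be complete.

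The gap is in the comparison of the flat forms. You assert that the form of $f^*(\V_{\ad})$ restricts on $\g_0$ to $-\trace(\Phi\Psi)$ ``once the invariant form on $\V_{\ad}$ is taken in the normalization induced by the standard representation'' --- but Example \ref{ex:adjoint} normalizes that form by the \emph{Killing form} of $\gl_n(\R)$ on the semisimple part, and on a simple summand $\g_i\subseteq\g_0$ the Killing form of $\gl_n$ and the trace form of the standard representation differ by a nontrivial positive constant (a Dynkin index). Your sentence therefore quietly replaces the paper's definition by the one that makes the claim true; this normalization constant is precisely what the paper's proof is designed to eliminate. The paper writes $\g_0=\bigoplus\g_i\oplus\a_0$, invokes uniqueness of invariant symmetric forms on each simple $\g_i$ to get $S_{\End}=\lambda_i S_{\Ad}$ there, and then pins $\lambda_i=1$: since both triples polarize the \emph{same} metric $h=S(\cdot,\sigma\cdot)$, the involutions would differ by the factor $\lambda_i^{-1}$, forcing $\lambda_i=\pm1$, and the sign $+1$ is selected because both involutions have a compact Lie algebra as $+1$-eigenspace but not as $-1$-eigenspace. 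To close your argument you should either run this normalization step, or prove directly that the adjoint form as actually defined restricts to $-\trace(\Phi\Psi)$ on $\g_0$; you should also say a word about the abelian summand $\a_0$ of $\g_0$, where uniqueness of invariant forms is unavailable and some separate (if easy) check is needed.
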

 \begin{proof}
  Since $f$ and $f_0$ induce the same metric on $\calW_0$, we can work with the latter. Write $\g_0 = \bigoplus \g_i \oplus \a_0$, with $\g_i$ simple Lie algebras and $\a_0$ abelian. A straightforward verification proves that the two metrics, one obtained by tensoring $f$ with its dual metric and the other one by composing $f$ with the adjoint action, coincide. Working with the metric $f_0$, we can exploit the usual uniqueness argument for the Killing form to deduce that the flat symmetric forms, which we temporarily denote by $S_\End$ and $S_\Ad$, coincide, up to some constant multiple, on each $\g_i$. This implies equality on each $\g_i$ since both involutions have as $+1$-eigenspace a compact Lie algebra, but not as $-1$-eigenspace.
 \end{proof}
 
 To prove points \eqref{item:beta} and \eqref{item:Dcan} of proposition \ref{prop:phls}, we introduce the connection $\Dpb = D - f^*\beta_N$, which by equivariance of $f$ and of $\beta_N$ descends to a connection on the bundle $\calV \to M$. We want to prove that $\Dcan = \Dpb$. Define $\alpha = \de s \cdot s^{-1} \in \calA^1_M(\g)$ (the pull-back through $s$ of the right Maurer-Cartan form $\theta_r$ on $G$) and introduce another auxiliary connection $\Dalpha$ by $\Dalpha v = Dv - \alpha \cdot v$.
 \begin{notation}\label{not:projection}
  Let $\phi \in \calA_{\tM}^p(\g)$ be any $\g$-valued $p$-form. We define $\phi^{[\p]}$ as the composition of $\phi$ with the projection to the subbundle $f^*[\p]$ of $\tM \times \g$. When necessary, we will write explicitly $\phi_{\tx}^{[\p]}$ to denote that both $\phi$ and $[\p]$ are to be considered at $\tx$ (i.e. the projection is to $[\p]_{\tx} = \Ad_{f(\tx)}\p$). The form $\phi^{[\k]}$ is defined analogously.
 \end{notation}

 \begin{lemma}\label{lemma:propphls1}
  We have $f^*\beta_N = \alphap$. Furthermore, $\Dalpha v = s \cdot (D (s^{-1}v))$, and $\Dalpha$ is a metric connection which commutes with $\sigma$.
 \end{lemma}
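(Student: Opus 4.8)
The plan is to reduce all three assertions to statements on the trivial bundle $\tM\times\R^n$ pulled back through the section $s$, the only genuinely geometric input being a direct computation of $\de f$ in terms of the infinitesimal action $\piTN$.

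For the first identity I would write $f=\pi\circ s$, with $\pi\colon G\to N$ the projection, and differentiate. Since $\alpha=s^{*}\theta_r$, one has $\de s(X)=\de R_{s(\tx)}\big(\alpha(X)\big)$, and using $\frac{\de}{\de t}\pi\big(\exp(t\xi)g\big)\big|_{0}=\piTN(g\cdot o,\xi)$ (with $o=eK$ the base point, so $s(\tx)\cdot o=f(\tx)$) this gives $\de f(X)=\de\pi_{s(\tx)}\big(\de R_{s(\tx)}\alpha(X)\big)=\piTN\big(f(\tx),\alpha(X)\big)$. Now the kernel of $\piTN(n,\cdot)$ is exactly the isotropy subalgebra $[\k]_n=\Ad_n\k$, while its restriction to $[\p]_n$ is precisely the isomorphism inverted by $\beta_{N,n}$. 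Hence the $[\k]$-component of $\alpha(X)$ is killed and $\de f(X)=\piTN\big(f(\tx),\alpha(X)^{[\p]}\big)$; applying $\beta_{N,f(\tx)}$ yields $(f^{*}\beta_N)(X)=\alpha(X)^{[\p]}=\alphap(X)$. This discarding of the $[\k]$-part is the heart of the matter, and I expect it to be the main obstacle, in the sense that it is the one step that is not pure bookkeeping.

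The identity $\Dalpha v=s\cdot\big(D(s^{-1}v)\big)$ is a one-line Leibniz computation: since $D=\de$ on $\tM$ and $\de(s^{-1})=-s^{-1}\,\de s\,s^{-1}$, expanding $s\,\de(s^{-1}v)$ gives $\de v-(\de s\,s^{-1})v=Dv-\alpha\cdot v=\Dalpha v$. Finally, to see that $\Dalpha$ is metric and commutes with $\sigma$, I read this identity as saying that the bundle isomorphism $v\mapsto s^{-1}v$ carries $\Dalpha$ to the flat connection $\de$ on $\tM\times\R^n$. Under it the metric $h$ becomes the \emph{constant} form $\scal{\cdot}{\cdot}_{\std}$, by the very definition \eqref{eqn:defnmetric}, so $\de$ (hence $\Dalpha$) is metric; and because $s$ takes values in $G$, which preserves the flat form $S$, the form $S$ is carried to a constant form $S_0$, whence the involution $\sigma$ determined by $h(\cdot,\cdot)=S(\cdot,\sigma\,\cdot)$ is carried to a constant involution $\sigma_0$. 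A constant tensor is $\de$-parallel, so $\Dalpha$ commutes with $\sigma$. Equivalently, $\alpha=\de s\,s^{-1}$ is $S$-skew because it is valued in $\g=\Lie(G)$, so $\Dalpha=D-\alpha$ preserves $S$ as well as $h$, and therefore preserves $\sigma$. The subtle structural point in this last step is that one must use that $s$ is valued in the $S$-preserving group $G$ (equivalently that $\alpha$ is $S$-skew); without it $\Dalpha$ would be metric but would not respect the polarization.
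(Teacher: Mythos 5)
Your argument reproduces the paper's proof step for step: the first identity via $\piTN\big(\theta_r(X)\big) = p_*X$ (checked at $e$ and propagated by equivariance) followed by projection to $[\p]$, the second identity via the one-line Leibniz computation, metricity via the gauge transformation $v \mapsto s^{-1}v$ that turns $h$ into the constant standard product, and compatibility with $\sigma$ via the identity $S\big(v, \sigma_f(w)\big) = \scal{v}{w}_{\std}$ for $\sigma_f = s^{-1}\circ\sigma\circ s$. The one place where you are weaker than the paper is exactly the step you yourself flag as ``the subtle structural point'': you \emph{assert} that $s$ takes values in a group preserving $S$ (equivalently, that $\alpha$ is $S$-skew because it is $\g$-valued), but for the ambient group $G = \GL(n,\R)$ of Proposition \ref{prop:phls}\eqref{item:beta} this is false, and nothing in the definition \eqref{eqn:defnmetric} of a lift $s$ of the metric $f$ forces it — $s$ is only determined up to right multiplication by $O(n)$, which need not preserve $S$ either. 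The paper closes this by first reducing without loss of generality to the distinguished lift $s = s_0$ valued in $G_0$, the Zariski closure of the monodromy (Notation \ref{not:G0}), and then invoking $G_0$-invariance of $S$: since $S$ is flat it is invariant under $\Image(\rho)$, hence under its Zariski closure, preserving a bilinear form being an algebraic condition. Without some such reduction your final step does not go through, because $s^*S$ need not be a constant form and so the identity $S\big(sv,\sigma(sw)\big) = \scal{v}{w}_{\std}$ does not by itself show that $\sigma_f$ is constant. Everything else in your write-up is correct and coincides with the paper's route.
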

 \begin{proof}
  The first assertion is a consequence of the identity $\theta_r^{[\p]} = p^*\beta_N$, where $p \colon G \to G/K = N$ and $\theta_r^{[\p]}$ is the projection of $\theta_r$ onto $p^*[\p]$, as in Notation \ref{not:projection}. This identity comes from $\piTN(\theta_r(X)) = p_*X$ for all $X \in T_gG$, which by equivariance can be proved only at $g = e$, where it is obvious. The expression for $\Dalpha$ is a straightforward computation:
  $$
   s \cdot D(s^{-1} \cdot v) = Dv - s \cdot s^{-1} \cdot \de(s) \cdot s^{-1} \cdot v = Dv - \alpha\cdot v.
  $$
  By virtue of this formula, to prove that $\Dalpha$ respects $\sigma$ is equivalent to prove that $\sigma_f = s^{-1} \circ \sigma \circ s$ is flat, i.e. $D\sigma_f = 0$. To that aim, first reduce without loss of generality to $s = s_0$. Then, by $G_0$-invariance of $S$,
  $$
  S\big(v, \sigma_f(w)\big) = S\big(s v, \sigma(s w)\big) = h(s v, sw) = \bigscal{v}{w}_{\std}.
  $$
  Since both $S$ and the standard scalar product are flat, $\sigma_f$ must be, too. Finally, the fact that $\Dalpha$ is metric is immediate computing $\de h(v, w) = \de \scal{s^{-1}v}{s^{-1}w}_{\std}$.
 \end{proof}

 \begin{lemma}\label{lemma:propphls2}
  Let $\calV$ underlie a polarized harmonic local system $(\V, \sigma, S)$. Then the induced decomposition $\End(\calV) = \End(\calV)^+ \oplus \End(\calV)^-$ coincides with the decomposition in anti-selfadjoint and selfadjoint endomorphisms, respectively.
 \end{lemma}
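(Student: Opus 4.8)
The plan is to prove the sharper statement that the involution $\sigma_{\End}$ induced by the phls structure on $\End(\calV)$ is $A \mapsto -A^*$, where $*$ denotes the adjoint with respect to the harmonic metric $h$. Once this is in hand the assertion is immediate, since the $+1$-eigenspace of $\sigma_{\End}$ is then $\{A : A^* = -A\}$ and its $-1$-eigenspace is $\{A : A^* = A\}$, i.e.\ exactly the anti-selfadjoint and the selfadjoint endomorphisms.

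To identify $\sigma_{\End}$ I would first record the induced data on $\End(\calV) = \calV \otimes \calV^*$: the positive-definite metric $h_{\End}(A,B) = \trace(A^* B)$, and the flat (Killing-type) symmetric form $S_{\End}(A,B) = -\trace(AB)$, which is $D$-flat because the trace of a product of flat sections is locally constant. The phls involution is the unique one satisfying $h_{\End}(A,B) = S_{\End}(A, \sigma_{\End}B)$ and normalized so that $S_{\End}$ is positive on the $+1$-eigenspace. The identification is then immediate from cyclicity of the trace together with the real identity $\trace(A^* B) = \trace(A B^*)$: one gets $S_{\End}(A, -B^*) = \trace(AB^*) = \trace(A^*B) = h_{\End}(A,B)$, hence $\sigma_{\End}(B) = -B^*$. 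This is consistent with the positivity normalization, since for $A^* = -A$ one has $S_{\End}(A,A) = -\trace(A^2) = \trace(A^*A) = \|A\|^2 > 0$.

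As a cross-check, and to tie the computation to the set-up of Example \ref{ex:adjoint}, I would re-derive the same conclusion through the lift $s_0$ of Notation \ref{not:G0}: by Lemma \ref{lemma:propphls1} it trivializes $(\calV,h)$ as $(\tM \times \R^n, \std)$, turning $*$ into the ordinary transpose, $\sigma$ into the constant involution $\epsilon = \mathrm{diag}(I_p,-I_q)$, and the induced involution on $\gl_n$ into the Cartan involution $X \mapsto -X^t$ of $\GL(n,\R)/\O(n)$ at the base point, whose eigenspaces are precisely the skew and the symmetric matrices. The one delicate point—the only place a sign can slip—is to insist that the flat form on $\End(\calV)$ be the trace form, so that $\sigma_{\End}$ comes out as the Cartan involution $-(\cdot)^*$ rather than as the conjugation $A \mapsto \sigma A \sigma$; the two agree exactly on the $S$-skew subbundle $\mathfrak{so}(S)$, where $\beta$ and the adjoint bundle $\calW_0$ take their values, which is why anchoring the normalization to the adjoint model of Example \ref{ex:adjoint} is what fixes the correct sign.
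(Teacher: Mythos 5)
Your central step --- identifying the induced involution on $\End(\calV)$ with $A \mapsto -A^*$ --- begs the question. The phls structure that $\End(\calV)=\calV\otimes\calV^*$ inherits functorially from $(\V,\sigma,S)$ has involution $A\mapsto \sigma A\sigma$ and flat form $S_{\End}(A,B)=\trace(AB^{t})$, where $B^{t}$ denotes the $S$-transpose; your form $-\trace(AB)$ agrees with this only on $S$-skew endomorphisms. Since the involution is already pinned down by the functorial construction, you are not free to ``normalize'' $S_{\End}$ to the trace form: choosing $S_{\End}(A,B)=-\trace(AB)$ is exactly equivalent to choosing $\sigma_{\End}=-(\cdot)^*$, i.e.\ to the conclusion. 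Moreover the sharper statement you aim for is false on all of $\End(\calV)$: take $\calV$ trivial of rank $2$ with $\sigma=S=\mathrm{diag}(1,-1)$ and $h=\mathrm{Id}$; then $A=\mathrm{diag}(1,0)$ commutes with $\sigma$, hence lies in $\End(\calV)^+$, yet it is selfadjoint. The same issue infects your cross-check: in the $s_0$-trivialization the functorially induced involution on $\gl_n(\R)$ is $X\mapsto \epsilon X\epsilon$ with $\epsilon=\mathrm{diag}(I_p,-I_q)$, not the Cartan involution $X\mapsto -X^{t}$, so asserting the latter is again assuming what is to be proved.

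That said, your closing sentence contains the fact that actually carries the proof. From $h(v,w)=S(v,\sigma w)$ and the symmetry of $h$ and $S$ one gets $\sigma^{t}=\sigma$ and $A^*=\sigma A^{t}\sigma$; hence $A^*=-\sigma A\sigma$ precisely when $A^{t}=-A$, and the two decompositions coincide on the subbundle $\mathfrak{so}(\calV,S)$ of $S$-skew endomorphisms. Since the monodromy preserves the flat form $S$, one has $\g_0\subseteq\mathfrak{so}(S)$, so $\calW_0$ --- where $\alpha$, $\beta$, and everything the lemma is subsequently applied to take their values --- sits inside this subbundle. This is exactly the scope of the paper's own argument, which proves the statement on $\calW_0$ via Lemma~\ref{lemma:keyfact} and the uniqueness of invariant forms on the simple factors of $\g_0$; your identity $A^*=-\sigma A\sigma$ on $S$-skew endomorphisms is in fact a more elementary route to that restricted statement. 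To repair the write-up, discard the first two paragraphs, restrict the claim to $\mathfrak{so}(\calV,S)\supseteq\calW_0$, and promote the observation at the end from a remark about signs to the proof itself.
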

 \begin{proof}
  On the subbundle $\calW_0$ this follows from Lemma \ref{lemma:keyfact} and \eqref{eqn:defnmetric}, since $\calW^{+_0} = f_0^*[\gl_n(\R)^+]$, and $\gl_n(\R)^+ \oplus \gl_n(\R)^-$ is the decomposition of anti-symmetric and symmetric matrices. Now if for example $A \in \End(\calV)^+$, then as in Notation \ref{not:G0}, $h^{-1}Ah \in \End(\calV)^{+_0}$, and one reduces to the previous case.
 \end{proof}

 Thanks to Lemma \ref{lemma:propphls2}, an endomorphism commutes with $\sigma$ if and only if it is anti-selfadjoint; since by Lemma \ref{lemma:propphls1} we have $\Dpb = \Dalpha + \alphak$, then, $\Dpb$ both commutes with $\sigma$ and is metric. Furthermore, since $f^*\beta_N$ takes values in $f^*[\p]$, that is, the selfadjoint part, the decompositions $D = \Dcan + \beta = \Dpb + f^*\beta_N$ must coincide. This gives point \eqref{item:beta} of Proposition \ref{prop:phls}. To prove point \eqref{item:Dcan}, since $\beta$ anti-commutes with $\sigma$, we only need to prove that $\Dcan = \Dpb$ commutes with $\sigma$. But again, $\Dpb = \Dalpha + \alphak$, and both commute with $\sigma$.
 
 Remark also that the Maurer-Cartan equation \eqref{eqn:maurercartan} (which is proved for example in \cite{BuRa90}, Chapter 1), follows easily form the usual Maurer-Cartan equation for Lie groups, which in our notations implies $\de \alpha = \frac{1}{2}[\alpha, \alpha]$:
 $$
 \dcan \alpha = \de \alpha - [\alphap, \alpha] = \frac{1}{2} [\alpha, \alpha] - [\alphap, \alpha] = \frac{1}{2}[\alphak, \alphak] - \frac{1}{2} [\alphap, \alphap].
 $$
 This implies that $\de \beta - [\beta, \beta] = \dcan \beta = \dcan(\alphap) = 0$, since all terms on the right hand side take values in $[\k]$.
 
 The formula for the codifferential (point \eqref{item:codifferential} of Proposition \ref{prop:phls}) follows easily from the formula for the codifferential of a metric connection (see \cite{EeLe83}, (1.20)) and selfadjointness of $\beta$: Locally around a point, let $\{E_j\}$ be an orthonormal frame. Then:
 \begin{align*}
 \int_M \scal{\de^*\alpha}{v} \de \Vol &= \int_M \scal{\alpha}{\dcan v + \beta\cdot v} \de \Vol\\
  &= \int_M \sum_{j} \Big(\bigscal{-\ncan_{E_j} \alpha(E_j)}{v} + \bigscal{\beta(E_j) \cdot \alpha(E_j)}{v} \Big)\de \Vol
 \end{align*}
 (here we have abused notation since $E_j$ is only locally defined; it is to be meant that one integrates the function that is given locally around every point in such a way). The formula for $\de^*$ follows.
 
 To obtain the formula for the Laplacian (point \eqref{item:laplacian} of Proposition \ref{prop:phls}), a straightforward computation using \eqref{eqn:codifferential} and the functoriality of $\Dcan$ with respect to tensor products and duals gives:
 $$
 \Delta v = \Delta^{\can} v + \sum_{j} \beta(E_j) \cdot \big(\beta(E_j) \cdot v\big) - \sum_j \ncan_{E_j} \big(\beta(E_j)\big) \cdot v.
 $$
 We claim that the vanishing of the last term is equivalent to $f$ being harmonic: Indeed, recall that a map $f \colon \tM \to N$ is harmonic if and only if its tension field $\tau(f) = \sum_j \nN_{E_j} \de f(E_j)$ vanishes, where $\nN$ is the pull-back of the Levi-Civita connection on $N$ and $E_j$ is an orthonormal frame of $\tM$. 
 Now Example \ref{ex:adjoint} implies that $\beta_N \circ \nN = \ncan \circ \beta_N$, so that, since $\beta_N$ is injective and $\beta_N \circ \de f$ is the pull-back of $\beta$ to $\tM$, $f$ is harmonic if and only if $\sum_j \ncan_{E_j} \beta(E_j) = 0$.
 
 Finally, point \eqref{item:globalsections} of Proposition \ref{prop:phls} is an integration by parts: Taking a $v \in V$, and denoting by $v^+$ its projection on $\calV^+$, it suffices to prove that $D(v^+) = 0$. By \eqref{eqn:laplacian}, since $Dv = 0$, we have:
 $$
 0 = \int_M \bigscal{Jv}{v} \de \Vol = \int_M \big\| \Dcan v\big\|^2 \de \Vol + \int_M \big\| \beta \cdot v \big\|^2 \de \Vol
 $$
 Now both terms are non-negative, hence they must vanish. But since $\Dcan(v^+) = \Dcan(v)^+$ and $\beta \cdot (v^+) = (\beta \cdot v)^-$, these quantities must vanish, as well, and so must their sum $D(v^+)$. This concludes the proof of Proposition \ref{prop:phls}.
 
 Let us now introduce the main object to which our techniques will be applied in the next sections. Recall that if $\Gamma$ is a finitely generated group and $G$ an algebraic group, one has the representation space
 $$
 \scrR(\Gamma, G) = \Hom(\Gamma, G).
 $$
 This is actually an algebraic variety (a subvariety of $G^r$, where $r$ is the cardinality of a set of generators of $\Gamma$); the group $G$ acts on it by conjugation, and one can construct the moduli space of representations as the GIT quotient $\M(\Gamma, G) = \scrR(\Gamma, G) \GIT G$. When $\Gamma = \pi_1(M, x_0)$ this goes under the name of the ``Betti'' moduli space (cfr. \cite{Si94} \S 6), $\MB(M, G)$. We can define the ``energy functional'' $E \colon \scrR(\Gamma, G) \to \R$ by:
 \begin{equation}\label{eqn:defnenergy}
 E(\rho) = \inf \bigg\{ E(f) = \frac{1}{2} \int_M \big\| \de f\big\|^2 \de \Vol \ \bigg| \ f \colon \tM \to N \text{ is smooth and } \rho\text{-equivariant} \bigg\}.
 \end{equation}
 This is actually invariant under the conjugation action, so it descends to a functional on $\MB(M, G)$. We conclude the section by analyzing two special cases for $M$.
 
 \begin{example}
  If $M = S^1$, then harmonic mappings from $\tM$ are geodesics. Letting $g = \rho(1)$, the existence of $g$-equivariant geodesics (i.e. elements realizing the minimum in \eqref{eqn:defnenergy}) is then equivalent to $g$ being semisimple. One can see easily that in this case the energy is simply the square of the ``translation length'' (see \cite{BrHa99}):
  \begin{equation}\label{eqn:translationlength}
  E(g) = L(g)^2 = \inf_{y \in N} \dist\big( y, g \cdot y\big)^2
  \end{equation}
  (the $\geq$ inequality is given by Cauchy-Schwarz, the other one is an approximation argument starting by considering the unique geodesic arc connecting $y$ and $g \cdot y$). The proof of this fact also implies that the infimum in \eqref{eqn:translationlength} exists if, and only if, $g$ is semisimple (this fact is true in a much more general setting, see e.g. \cite{Pa11}).
 \end{example}

 \begin{example}\label{ex:kahler}
  Suppose that $M = X$ is a K\"ahler manifold. Then, there is a correspondence between harmonic bundles and some polystable Higgs bundles (\cite{Si92}, Theorem 1), which in our notations is as follows: The Higgs bundle $(\calV, \theta)$ is such that $\beta = \theta + \theta^*$ is the decomposition into $(1,0)$ and $(0,1)$ parts, and the holomorphic structure $(\calV, \bar\partial)$ is given by $\dcan = \partial + \bar\partial$. Furthermore, for harmonic bundles (hence, for phls), we have the generalized K\"ahler identities (see \cite{Si92}). Two main consequences we will be interested in are that a form is harmonic if and only if it is a zero of $\Delta' = D' D'^* + D'^*D'$, and that the pull-back of a harmonic $\calV$-valued 1-form is again harmonic. If we suppose further that $X$ be a smooth projective variety, then this correspondence gives a homeomorphism of moduli spaces between $\MB(X, G)$ and $\MDol(X, G)$, which is the moduli space of appropriate Higgs bundles; on the latter, the energy functional is the $L^2$-norm of $\theta$, hence it is continuous. In Section \ref{sec:universal} we will prove $E$ to be continuous on the whole of $\RB(M, G)$ for every Riemannian manifold $M$.
 \end{example}

 \section{The universal twisted harmonic map}\label{sec:universal}
 
 \begin{defn}
  Fix a base point $x_0$ of $M$, and let $\tx_0 \in \tM$ be a preimage. Let $\Gamma = \pi_1(M, x_0)$ and denote by $Y$ the subset of $N \times \scrR(\Gamma, G)$ given by the points $(n, \rho)$ such that there exists a $\rho$-equivariant harmonic map $f$ satisfying $f(\tx_0) = n$. Define $\scrH \colon Y \times \tM \to N$ the universal map obtained by gluing the (unique) maps above, so that $\scrH(n, \rho, \cdot)$ is $\rho$-equivariant, harmonic and $\scrH(n, \rho, \tx_0) = n$.
 \end{defn}
 By Corlette's theorem \cite{Co88}, the projection of $Y$ on the second coordinate is $\scrR(\Gamma, G)^{ss}$, the set of reductive (also called semisimple) representations. Since harmonic maps all have the same energy and are minimizers (of the expression in \eqref{eqn:defnenergy} defining $E$), for $\rho$ in $\scrR(\Gamma, G)^{ss}$ we have $E(\rho) = E(\scrH(n, \rho, \cdot))$, for any $(n, \rho) \in Y$.
 \begin{lemma}\label{lemma:semicontinuity}
  Let $\rho_t \colon \Gamma \to G$ be a smooth family of representations, for $t$ in some smooth parameter space $T \ni 0$, and $f \colon \tM \to N$ a $\rho_0$-equivariant map. Then we can always find a smooth family $f_t \colon \tM \to N$ of $\rho_t$-equivariant maps such that $f_0 = f$. In particular, the energy functional is upper semi-continuous on the whole of $\scrR(\Gamma, G)$.
 \end{lemma}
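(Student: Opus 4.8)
The plan is to sidestep any PDE and produce $f_t$ directly by an \emph{equivariant center-of-mass} construction, exploiting that $N = G/K$ is a Hadamard manifold (complete, simply connected, of non-positive curvature). On such a manifold $\dist^2$ is smooth and every compactly supported probability measure $\mu$ has a unique barycenter $\mathrm{bar}(\mu)$, the minimizer of $y \mapsto \int_N \dist(y,z)^2\,\de\mu(z)$, and this minimizer is isometry-equivariant, i.e. $\mathrm{bar}(g_*\mu) = g\cdot \mathrm{bar}(\mu)$ for every $g \in G$. First I would fix a smooth cut-off $\chi \colon \tM \to [0,1]$ with compact support satisfying $\sum_{\gamma\in\Gamma}\chi(\gamma\tx)=1$ for all $\tx$; such a $\chi$ is obtained by taking any compactly supported $\psi \geq 0$ that is positive on a compact set surjecting onto $M$ and normalizing by the $\Gamma$-invariant, locally finite sum $\sum_{\gamma}\psi(\gamma\,\cdot\,)$.

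Next, for each $t \in T$ and $\tx \in \tM$ I would form the probability measure
$$
\mu_t^{\tx} = \sum_{\gamma \in \Gamma} \chi(\gamma^{-1}\tx)\,\delta_{\rho_t(\gamma)\,f(\gamma^{-1}\tx)},
$$
a finite sum near any point by proper discontinuity, and set $f_t(\tx) = \mathrm{bar}(\mu_t^{\tx})$. Three verifications then close the construction. \emph{Equivariance:} the substitution $\gamma = \delta\eta$, together with $\rho_t$ being a homomorphism, gives $\mu_t^{\delta\tx} = (\rho_t(\delta))_*\mu_t^{\tx}$, whence $f_t(\delta\tx)=\rho_t(\delta)\cdot f_t(\tx)$ by equivariance of the barycenter. \emph{Initial value:} since $f$ is $\rho_0$-equivariant, $\rho_0(\gamma)f(\gamma^{-1}\tx)=f(\tx)$ for every $\gamma$, so $\mu_0^{\tx}=\delta_{f(\tx)}$ and $f_0=f$. \emph{Smoothness:} the atoms and weights of $\mu_t^{\tx}$ vary smoothly with $(t,\tx)$.

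The main obstacle is this last point: I must know that the barycenter of a finitely supported measure on a Hadamard manifold depends smoothly on its data. This follows from the implicit function theorem applied to the variational equation defining $\mathrm{bar}$, using that $\dist^2$ is smooth (the cut locus is empty) and that strict convexity of the averaged function makes its Hessian nondegenerate, so the unique minimizer is a smooth function of the parameters. Once $(t,\tx)\mapsto f_t(\tx)$ is known to be smooth, the energy $E(f_t)=\tfrac12\int_M \|\de f_t\|^2\,\de\Vol$ is continuous in $t$, since $M$ is compact.

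Finally, for the upper semi-continuity clause: given $\rho_0$ and $\varepsilon>0$, choose a $\rho_0$-equivariant $f$ with $E(f)\leq E(\rho_0)+\varepsilon$ (such finite-energy maps exist because $N$ is contractible and $M$ compact). Running the \emph{same} construction with a fixed $f$ and $\chi$ but a varying representation $\rho$ produces $\rho$-equivariant maps $f_\rho$ depending continuously on $\rho$ near $\rho_0$, with $f_{\rho_0}=f$. Since $E(\rho)\leq E(f_\rho)$ by definition of the infimum and $E(f_\rho)\to E(f)$ as $\rho\to\rho_0$, we obtain $\limsup_{\rho\to\rho_0}E(\rho)\leq E(f)\leq E(\rho_0)+\varepsilon$; letting $\varepsilon\to 0$ shows $E$ is upper semi-continuous at $\rho_0$, hence on all of $\scrR(\Gamma,G)$.
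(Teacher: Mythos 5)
Your proof is correct, but it takes a genuinely different route from the paper's. The paper identifies $\rho_t$-equivariant maps with metrics on the family of flat bundles $\calV_t = \tM\times_{\rho_t}\R^n$, fixes local trivializations $\varphi_t^U$ over a covering chosen independently of $t$, and transports the metric corresponding to $f_0$ through the change of trivialization; upper semi-continuity is then obtained by applying this deformation to a minimizing sequence and invoking $W^{1,2}_{\textnormal{loc}}$-convergence as $t\to 0$. Your equivariant barycenter construction is instead intrinsic to the Hadamard manifold $N=G/K$: it produces one global formula for $f_t$, makes the $\rho_t$-equivariance and the initial condition $f_0=f$ one-line verifications, and --- via the implicit function theorem applied to the strictly convex functional $y\mapsto\sum_i w_i\,\dist(y,z_i)^2$, whose Hessian is bounded below by twice the metric in non-positive curvature --- yields locally uniform $C^1$-dependence of $f_\rho$ on the representation, which is exactly the input the $\limsup$ estimate needs and which the paper only sketches. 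The trade-off is that your argument genuinely uses the non-positive curvature and completeness of $N$ (existence, uniqueness, isometry-equivariance and smooth dependence of the center of mass, smoothness of $\dist^2$ from the empty cut locus), whereas the paper's gauge-theoretic argument uses only local triviality of the family of flat bundles and the convexity of the space of fiber metrics; both use compactness of $M$, yours for the existence of the $\Gamma$-partition of unity $\chi$ and of finite-energy competitors. Your treatment of the semi-continuity clause, quantified over an $\varepsilon$-approximate minimizer rather than a full minimizing sequence, is also slightly cleaner, since it avoids the interchange of the limits in $n$ and $t$ that is implicit in the paper's displayed inequality.
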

 \begin{proof}
  Maps $f_t$ correspond to metrics on the family of bundles $\calV_t = \tilde M \times_{\rho_t} \R^n$. These bundles are trivialized over common open subsets $\{U\}$, chosen independently of $t$; fix a family of local trivializations $\varphi_t^U$, smooth in $t$. The metric $f_0$ induces metrics on $\R^n$ on any local chart through $\varphi_0^U$, with a compatibility relation between charts. Composing them with $\varphi_t^U$ gives the desired family of metrics on $\calV_t$. The semi-continuity follows easily: If $f^n \colon \tM \to N$ is a minimizing sequence for $E(\rho_0)$, we deform each $f^n$ to $f_t^n$ as above. Then $f_t^n$ converges to $f^n$ in $W_{\textnormal{loc}}^{1,2}$ as $t \to 0$, thus $E(f_t^n)$ converges to $E(f^n)$. Hence
  $$
   E(\rho_0) = \lim_n E(f^n) \geq \lim_n E(f_t^n) - \varepsilon(t) \geq E(\rho_t) - \varepsilon(t), \quad \varepsilon(t) \xrightarrow{t \to 0} 0.
  $$
 \end{proof}

 \begin{prop}\label{prop:Hcontinuous}
  The subset $Y \subseteq N \times \scrR(\Gamma, G)$ is closed. The universal harmonic mapping $\scrH \colon Y \times \tM \to N$ is continuous.
 \end{prop}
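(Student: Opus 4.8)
The plan is to deduce both assertions from a single compactness statement: if $(n_k,\rho_k)\in Y$ converges to $(n_\infty,\rho_\infty)$ in $N\times\scrR(\Gamma,G)$, then the harmonic maps $f_k:=\scrH(n_k,\rho_k,\cdot)$ subconverge, uniformly on compact subsets of $\tM$ together with all derivatives, to a $\rho_\infty$-equivariant harmonic map $f_\infty$ with $f_\infty(\tx_0)=n_\infty$. Granting this, $(n_\infty,\rho_\infty)\in Y$, so $Y$ is closed; and, since by uniqueness every subsequential limit must coincide with $\scrH(n_\infty,\rho_\infty,\cdot)$, the whole sequence converges, which together with the smoothness (hence continuity) of each $\scrH(n,\rho,\cdot)$ yields the joint continuity of $\scrH$ on $Y\times\tM$.

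First I would produce a uniform energy bound. As $f_k$ is harmonic and $N$ is nonpositively curved, $f_k$ is an energy minimizer, so $E(f_k)=E(\rho_k)$; by the upper semicontinuity of Lemma \ref{lemma:semicontinuity}, $\limsup_k E(\rho_k)\le E(\rho_\infty)<\infty$, whence $\sup_k E(f_k)=:C_0<\infty$. The key step is to upgrade this $L^1$ bound to a uniform pointwise bound on the energy densities $e(f_k)$. Here I would invoke the Bochner formula of \cite{EeSa64}: since the metric on $\tM$ lifted from $M$ has Ricci curvature bounded below and $N$ has nonpositive sectional curvature, each $e(f_k)$ is a subsolution, $\Delta e(f_k)\ge -C\,e(f_k)$, with $C$ independent of $k$. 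Because $f_k$ is $\rho_k$-equivariant and $\rho_k(\gamma)$ acts by isometries, $e(f_k)$ is $\Gamma$-invariant and descends to a nonnegative function $\bar e_k$ on the compact manifold $M$ with $\int_M \bar e_k\,\de\Vol=E(f_k)\le C_0$. A Moser iteration / mean-value inequality for subsolutions on $M$ then gives $\sup_M \bar e_k\le C_1\|\bar e_k\|_{L^1(M)}\le C_1 C_0$, hence a uniform bound $|\de f_k|\le C_2$ on all of $\tM$.

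With the $f_k$ uniformly Lipschitz and normalized by $f_k(\tx_0)=n_k\to n_\infty$, Arzelà--Ascoli extracts a subsequence converging uniformly on compact subsets of $\tM$ to a Lipschitz map $f_\infty$. Passing the relation $f_k(\gamma\tx)=\rho_k(\gamma)\cdot f_k(\tx)$ to the limit (using $\rho_k\to\rho_\infty$) shows $f_\infty$ is $\rho_\infty$-equivariant, and clearly $f_\infty(\tx_0)=n_\infty$. To see that $f_\infty$ is harmonic I would bootstrap: in local charts the harmonic map system is semilinear elliptic with a gradient-quadratic nonlinearity, so the uniform bound on $|\de f_k|$ yields, via elliptic $L^p$ and Schauder estimates, uniform $C^\infty_{\mathrm{loc}}$ bounds; thus the convergence is in $C^\infty_{\mathrm{loc}}$ and $f_\infty$ solves the harmonic map equation. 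This already gives $(n_\infty,\rho_\infty)\in Y$, hence closedness. For continuity, I would use the uniqueness built into the definition of $\scrH$: given two $\rho_\infty$-equivariant harmonic maps into the nonpositively curved $N$ agreeing at $\tx_0$, the function $\tx\mapsto\dist\big(f_\infty(\tx),\scrH(n_\infty,\rho_\infty,\tx)\big)$ is $\Gamma$-invariant and subharmonic (\cite{Ha67}), hence descends to $M$ and is constant by the maximum principle, so it vanishes identically; thus every subsequential limit equals $\scrH(n_\infty,\rho_\infty,\cdot)$, the whole sequence $f_k$ converges to it in $C^\infty_{\mathrm{loc}}$, and $\scrH$ is continuous.

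The main obstacle is the uniform energy-density estimate of the second paragraph, namely converting the global $L^1$ bound coming from upper semicontinuity into a uniform supremum bound. This is exactly where the nonpositive curvature of the target (to fix the sign in the Bochner inequality) and the compactness of $M$ (to run Moser iteration on a closed manifold, via the descent of the $\Gamma$-invariant energy density) are both essential; the equivariant normalization $f_k(\tx_0)=n_k$ then prevents the images from escaping to infinity. I note that reductivity of $\rho_\infty$ is not needed as an input---it follows a posteriori from the existence of the equivariant harmonic map $f_\infty$---so the argument produces closedness of $Y$ without a separate analysis of the non-reductive locus.
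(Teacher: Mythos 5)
Your proof is correct and follows essentially the same route as the paper: a uniform energy bound from the upper semicontinuity of Lemma \ref{lemma:semicontinuity}, upgraded to a uniform Lipschitz bound, then Arzel\`a--Ascoli plus elliptic bootstrap to extract a smooth equivariant harmonic limit, and finally uniqueness (given the normalization at $\tx_0$) to promote subsequential convergence to full convergence and continuity of $\scrH$. The only differences are cosmetic: where you derive the gradient bound from the Eells--Sampson Bochner inequality and Moser iteration on the compact quotient, the paper simply cites the Lipschitz estimate of \cite{Li99}, and where you spell out uniqueness via the subharmonicity of the distance between two equivariant harmonic maps, the paper invokes it implicitly from the definition of $\scrH$.
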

 \begin{proof}
  Start from a converging sequence $Y \ni (n_t, \rho_t) \to (n_\infty, \rho_\infty)$, and fix $f_t \colon \tM \to N$, which are $\rho_t$-equivariant and such that $f_t(\tx_0) = n_t$. By Lemma \ref{lemma:semicontinuity}, the energy of $\{f_t\}$ is bounded. We can apply \cite{Li99}, Theorem A and Section 5, to deduce that on any compact subset $K \subseteq \tM$ the restrictions $f_t|_K$ are Lipschitz maps, with a uniform Lipschitz constant $L$. Together with the convergence of $f_t(\tx_0)$, we obtain a uniform bound for $f_t$ on $K$. We can then apply the $W^{2,p}$-estimates (cfr. \cite{GiTr77}, Theorem 9.11) to the semi-linear second order elliptic equation of the harmonic maps (cfr. \cite{EeSa64}, (5)), to get a uniform bound on the $W^{2,p}$-norm of $f_t$. A ``bootstrap'' argument then gives uniform bounds in every $W^{k,p}$-norm. Then, Sobolev embedding and Arzelà-Ascoli theorem give a subsequence converging in $\calC^2$ to some limit smooth map $f_\infty$. This is automatically $\rho_\infty$-invariant, and satisfies $f_\infty(\tx_0) = n_\infty$; furthermore, it must satisfy the harmonic map equation, hence it is harmonic, and $(n_\infty, \rho_\infty) \in Y$. Indeed, by uniqueness of such an harmonic map, the whole sequence $f_t$ converges to $f_\infty$; this allows us to conclude that $\scrH$ is continuous.
 \end{proof}

 Recall that, given any representation $\rho \colon \Gamma \to G$, there is a ``semisimplification'' $\rho^{ss} \colon \Gamma \to G$ defined as the graded associated to any composition series. Slightly abusing terms, we will call semisimplification of $\rho$ any point in the unique closed orbit inside the closure of the orbit $G \cdot \rho$ for the action of $G$ on $\scrR(\Gamma, G)$ by conjugation.
 
 \begin{lemma}\label{lemma:energysemisimplification}
  Let $\rho \colon \Gamma \to G$ be any representation. Then $E(\rho) = E(\rho^{ss})$.
 \end{lemma}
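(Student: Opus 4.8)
The plan is to prove the two inequalities $E(\rho)\le E(\rho^{ss})$ and $E(\rho)\ge E(\rho^{ss})$ separately. The first is the ``soft'' direction. Recall that $E$ is invariant under the conjugation action of $G$ and, by definition, $\rho^{ss}$ lies in the closure $\overline{G\cdot\rho}$; choosing $g_k\in G$ with $g_k\rho g_k^{-1}\to\rho^{ss}$, the upper semi-continuity established in Lemma \ref{lemma:semicontinuity} gives
$$
E(\rho^{ss})\ge\limsup_k E\big(g_k\rho g_k^{-1}\big) = E(\rho),
$$
the last equality because $E$ is constant along the conjugation orbit. Hence $E(\rho)\le E(\rho^{ss})$.

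For the reverse inequality I would argue by a direct construction, reducing to the case of a single extension and inducting on the length of a composition series. So suppose first that $\rho$ preserves a flat subbundle, giving an exact sequence of flat bundles $0\to\calV_1\to\calV\to\calV_2\to0$ with monodromies $\rho_1$ and $\rho_2$. Any $\rho$-equivariant map $f$ is the same datum as a metric $h$ on $\calV$; let $\calV_1^{\perp}$ be the $h$-orthogonal complement of $\calV_1$, so that $\calV\cong\calV_1\oplus\calV_1^{\perp}$ as smooth bundles and $h = h_1\oplus h_2$, where $h_1 = h|_{\calV_1}$ and $h_2$ is the transport of $h|_{\calV_1^{\perp}}$ under the smooth isomorphism $\calV_1^{\perp}\cong\calV_2$. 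In this splitting the flat connection is block upper-triangular,
$$
D = \begin{pmatrix} D_1 & B \\ 0 & \tilde D_2\end{pmatrix},
$$
and from $D^2 = 0$ one reads off that $\tilde D_2$ is flat; thus $h_2$ is a genuine $\rho_2$-equivariant metric. Decomposing $D = \Dcan + \beta$ with respect to $h$ and computing the adjoint block by block, the self-adjoint part $\beta$ has diagonal blocks equal to the $\beta$'s of $(\calV_1, h_1)$ and $(\calV_2, h_2)$ and off-diagonal blocks built from $B$ and its adjoint; consequently
$$
\|\beta\|^2 = \|\beta_1\|^2 + \|\beta_2\|^2 + \tfrac12\|B\|^2 \ge \|\beta_1\|^2 + \|\beta_2\|^2
$$
pointwise. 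Integrating gives $E(f)\ge E(h_1) + E(h_2)\ge E(\rho_1) + E(\rho_2)$, and taking the infimum over $f$ yields $E(\rho)\ge E(\rho_1) + E(\rho_2)$. Iterating along a composition series with (reductive, irreducible) graded pieces $\rho_1,\dots,\rho_k$ produces $E(\rho)\ge\sum_i E(\rho_i)$; since the block-diagonal metric is admissible for $\rho^{ss}\cong\bigoplus_i\rho_i$ we also have $E(\rho^{ss})\le\sum_i E(\rho_i)$, whence $E(\rho)\ge E(\rho^{ss})$.

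Combining the two inequalities forces $E(\rho) = E(\rho^{ss}) = \sum_i E(\rho_i)$. To pass from $G = \GL(n,\R)$, where the filtration argument is transparent, to a general reductive $G$, I would use the totally geodesic embedding $N = G/K\hookrightarrow\GL(n,\R)/O(n)$ (Proposition \ref{prop:phls}, point \eqref{item:beta}): minimizing over the smaller class of maps into $N$ gives $E_G(\rho)\ge E_{\GL}(\rho)$, while for the reductive representation $\rho^{ss}$ Corlette's harmonic map already takes values in $N$ and remains harmonic in the ambient, so $E_{\GL}(\rho^{ss}) = E_G(\rho^{ss})$; chaining these with the $\GL$-case just proved closes the argument (the $G$-semisimplification agreeing, up to conjugacy, with the linear associated graded).

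The main obstacle I anticipate is the reverse inequality, and within it two points require care: verifying that the connection $\tilde D_2$ induced on the orthogonal complement really is flat (so that $h_2$ is an honest $\rho_2$-equivariant competitor, not merely a metric on a smooth bundle), and the precise bookkeeping showing the off-diagonal ``second fundamental form'' $B$ enters $\|\beta\|^2$ with a non-negative sign. The soft direction, by contrast, is immediate once upper semi-continuity and conjugation invariance are in hand.
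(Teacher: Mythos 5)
Your proof is correct, but it takes a genuinely different route from the paper's. The paper argues analytically: it runs Corlette's heat flow on a minimizing sequence to get $\rho$-equivariant maps $f_n$ with $E(f_n)\to E(\rho)$ and uniform $W^{1,p}$ bounds on $\beta_n$, recenters by $g_n$ with $f_n(\tx_0)=g_nK$, extracts (via the Lipschitz and elliptic estimates of Proposition \ref{prop:Hcontinuous}) a limit representation $\tilde\rho_\infty$ in the orbit closure together with a limiting harmonic $\tilde\rho_\infty$-equivariant map, and then identifies $\tilde\rho_\infty$ with $\rho^{ss}$ and $E(\rho)$ with $E(\tilde\rho_\infty)$ through a chain of inequalities using Lemma \ref{lemma:semicontinuity}. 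You replace all of this by (i) the same soft inequality $E(\rho)\le E(\rho^{ss})$ from semicontinuity plus conjugation invariance, and (ii) a block-triangular ``second fundamental form'' computation giving superadditivity of the energy along extensions, hence $E(\rho)\ge\sum_i E(\rho_i)\ge E(\rho^{ss})$. Your computation is right: the $h$-adjoint of the upper-triangular $D$ is lower-triangular, so the off-diagonal blocks of $\beta=\tfrac12(D-D^h)$ are $B/2$ and $B^*/2$, which yields the coefficient $\tfrac12$ on $\|B\|^2$, and $\tilde D_2$ is just the quotient connection transported to $\calV_1^\perp$, hence flat. Two points deserve a word of care: Lemma \ref{lemma:semicontinuity} is stated for smooth families, so in step (i) you should approach $\rho^{ss}$ along a one-parameter subgroup $\rho_t=\lambda(t)\rho\lambda(t)^{-1}$ adapted to the filtration (Kempf/Hilbert--Mumford for general reductive $G$) rather than an arbitrary sequence $g_k$; and in the reduction to $\GL(n,\R)$ you should note that reductivity of the Zariski closure of $\Image(\rho)$ does not depend on the ambient group, so the $G$-semisimplification is indeed $\GL$-conjugate to the linear associated graded. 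As for what each approach buys: yours is more elementary, avoids the heat flow entirely, and gives as a bonus the quantitative defect $\tfrac12\int_M\|B\|^2$ and the additivity $E(\rho^{ss})=\sum_iE(\rho_i)$; the paper's argument produces the recentered representations and the limiting harmonic map explicitly, which is then reused verbatim in the proof of Proposition \ref{prop:Econtinuous}.
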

 \begin{proof}
  First remark that, thanks to the proof of Corlette's theorem \cite{Co88}, we can construct a family $f_n$ of $\rho$-equivariant maps such that $E(f_n)$ converges to $E(\rho)$ and also the $L^p$-norms of the first two derivatives of $f_n$ are bounded. Indeed, start by any minimizing sequence $\hat{f}^n$, and denote $\hat{f}_t^n$ the metric constructed via the heat flow starting from $\hat{f}^n$. In Corlette's notations, if we call $\Phi_t^n$ the moment map associated to $\hat{f}_t^n$ (i.e. its tension field), we have $\|\Phi_t^n\|_{L^\infty} \xrightarrow{t\to\infty}0$; thanks to the estimates in Corlette's paper, the $W^{1,p}$ norm of $\beta_t^n = (\de \hat{f}_t^n \cdot (\hat{f}_t^n)^{-1})^{[\p]}$ (in his notations: $\theta_t^n$) are, for $t = t(n)$ big enough, bounded by a constant depending only on $E(\hat{f}_t^n)$, hence by a constant, since both $\hat{f}^n$ and the heat flow are energy-decreasing. Defining $f_n = \hat{f}_{t(n)}^n$ gives the desired sequence.
  
  Secondly, let $g_n$ be such that $f_n(\tx_0) = g_nK$, and define $\tilde \rho_n = g_n^{-1}\rho g_n$. We want to prove that $\tilde \rho_n$ subconverges to some $\tilde \rho_\infty$. By properness of $G \to G/K$, it suffices to prove that $\tilde \rho_n(\gamma)K$ remains at bounded distance from $eK$ for all $\gamma \in \Gamma$. This follows from the Lipschitz estimates on $\tilde f_n$ (coming from those on $f_n$), since:
  $$
   \dist\big(eK, \tilde \rho_n(\gamma) K\big) = \dist\big(\tilde f_n(\tx_0), \tilde f_n(\gamma\tx_0)\big) \leq L(\gamma) \cdot \dist(\tx_0, \gamma \tx_0) = C(\gamma).
  $$
  
  Lastly, we want to prove that $\tilde \rho_\infty = \rho^{ss}$ and that $E(\rho) = E(\tilde \rho_\infty)$. It is clear that $\tilde \rho_\infty$ is in the closure of the orbit of $\rho$. Furthermore, arguing as in Proposition \ref{prop:Hcontinuous}, $\tilde{f_n}$ converges in $W^{1,p}$ to some $\tilde f_\infty$, which is at least $\calC^1$. In fact, it is harmonic, since it minimizes the energy: This follows from $E(f_n) = E(\tilde f_n) \xrightarrow{n \to \infty} E(\tilde f_\infty)$, together with the following chain of inequalities (here we use also Lemma \ref{lemma:semicontinuity} for the second inequality):
  $$
   E(\tilde f_\infty) \geq E(\tilde\rho_\infty) \geq \limsup E(\tilde \rho_n) = E(\rho) = \lim E(f_n) = E(\tilde f_\infty).
  $$
  Remark that this also shows that $E(\rho) = E(\tilde \rho_\infty)$, which concludes the proof.
 \end{proof}
 \begin{prop}\label{prop:Econtinuous}
  The energy functional is continuous on the whole of $\scrR(\Gamma, G)$.
 \end{prop}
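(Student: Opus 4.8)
The plan is to combine the upper semi-continuity already established in Lemma \ref{lemma:semicontinuity} with a matching lower semi-continuity statement; continuity at an arbitrary $\rho_\infty \in \scrR(\Gamma, G)$ is exactly the conjunction of the two. Thus fix a sequence $\rho_t \to \rho_\infty$. By Lemma \ref{lemma:semicontinuity} we have $\limsup_t E(\rho_t) \le E(\rho_\infty)$, and it remains to prove $\liminf_t E(\rho_t) \ge E(\rho_\infty)$. Passing to a subsequence, I may assume $E(\rho_t) \to \ell := \liminf_t E(\rho_t)$, and the goal becomes $\ell \ge E(\rho_\infty)$.

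First I would produce good approximants. For each $t$, exactly as in the first paragraph of the proof of Lemma \ref{lemma:energysemisimplification}, I run Corlette's heat flow from a near-minimizer for $E(\rho_t)$ and stop at a large time to obtain a $\rho_t$-equivariant map $f_t$ with $E(f_t) \to \ell$ and with a uniform bound on the $W^{1,p}$-norm of $\beta_t = (\de f_t \cdot f_t^{-1})^{[\p]}$ depending only on the (bounded) energy; taking $p > \dim M$ and using Sobolev embedding, this yields a uniform Lipschitz constant $L$ for all $f_t$ on compacta. Choosing $g_t \in G$ with $f_t(\tx_0) = g_t K$, I normalize $\tilde f_t = g_t^{-1} \cdot f_t$, so that $\tilde f_t(\tx_0) = eK$, and set $\tilde\rho_t = g_t^{-1} \rho_t g_t$; the maps $\tilde f_t$ are $\tilde\rho_t$-equivariant with the same energy and Lipschitz bounds. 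As in Lemma \ref{lemma:energysemisimplification}, the Lipschitz estimate gives $\dist(eK, \tilde\rho_t(\gamma) K) = \dist(\tilde f_t(\tx_0), \tilde f_t(\gamma \tx_0)) \le C(\gamma)$ for every $\gamma$, so by properness of $G \to G/K$ each $\tilde\rho_t(\gamma)$ stays in a fixed compact set; after a further subsequence $\tilde\rho_t \to \rho_\infty'$ for some representation $\rho_\infty'$. Simultaneously, Arzel\`a--Ascoli applied to the uniform Lipschitz bound gives $\tilde f_t \to \tilde f_\infty$ uniformly on compacta (and weakly in $W^{1,2}_{\mathrm{loc}}$), and passing to the limit in the equivariance relation shows that $\tilde f_\infty$ is $\rho_\infty'$-equivariant.

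It then remains to compare $\rho_\infty'$ with $\rho_\infty$ and to estimate energies. Weak lower semi-continuity of the Dirichlet energy under weak $W^{1,2}$-convergence gives $E(\tilde f_\infty) \le \liminf_t E(\tilde f_t) = \ell$, whence $E(\rho_\infty') \le E(\tilde f_\infty) \le \ell$ by the definition of $E$ as an infimum. Finally, since each $\tilde\rho_t$ is conjugate to $\rho_t$, the two have the same image in the affine GIT quotient $\M(\Gamma, G) = \scrR(\Gamma, G) \GIT G$; letting $t \to \infty$ and using continuity of the quotient map together with the Hausdorff property of the target, $\rho_\infty'$ and $\rho_\infty$ map to the same point, i.e.\ they share a common semisimplification. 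By Lemma \ref{lemma:energysemisimplification} and conjugation-invariance of $E$ this forces $E(\rho_\infty') = E((\rho_\infty')^{ss}) = E(\rho_\infty^{ss}) = E(\rho_\infty)$, and therefore $E(\rho_\infty) \le \ell$, completing the lower semi-continuity. The main obstacle is precisely that the $\rho_t$ need not be reductive, so no genuine harmonic map is available and the normalized limit $\rho_\infty'$ need not equal $\rho_\infty$; it is Lemma \ref{lemma:energysemisimplification} (equality of energies along a semisimplification) that bridges this gap, while the Corlette-flow approximants are what supply the uniform Lipschitz control needed for the compactness argument in the absence of honest harmonic maps.
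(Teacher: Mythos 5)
Your argument is correct, and it reorganizes the proof in a genuinely different way from the paper. The paper proceeds by a three-step reduction: first the case of semisimple representations whose harmonic maps have converging base points (where Proposition \ref{prop:Hcontinuous} gives $W^{1,2}$-convergence of the actual harmonic maps), then general semisimple sequences (finding conjugators), and finally the general case by semisimplifying, observing that the traces converge, and lifting the closed points of $\MB(M,G)$ via the Kempf--Ness theorem so as to land back in the semisimple case. You instead prove the missing lower semicontinuity directly on the original, possibly non-reductive, sequence: you never produce a harmonic map at all, only Corlette-flow near-minimizers with uniform $W^{1,p}$ control of $\beta_t$ (exactly the estimates the paper extracts in the first paragraph of Lemma \ref{lemma:energysemisimplification}), you normalize, extract a limit pair $(\rho_\infty', \tilde f_\infty)$, use only \emph{weak} lower semicontinuity of the Dirichlet energy rather than strong $W^{1,2}$-convergence, and identify $\rho_\infty'$ with $\rho_\infty$ only at the level of the GIT quotient before transferring the energy through Lemma \ref{lemma:energysemisimplification}. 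What your route buys is economy: no appeal to Kempf--Ness, no need for the continuity of $\scrH$, and a single compactness argument instead of three nested cases. What the paper's route buys is that it also yields convergence of the normalized harmonic maps themselves (reusing the machinery of Section \ref{sec:universal}), and it makes the Hausdorff-separation step concrete --- the reduction to $G = \GL(n,\C)$ and the trace functions are precisely what justify, for a real reductive $G$, your assertion that equal images in $\M(\Gamma,G)$ force conjugate semisimplifications with equal energies; if you keep your abstract phrasing you should at least note that this separation property for closed $G(\R)$-orbits is the point being invoked. One further small gap shared with the paper's own Lemma \ref{lemma:energysemisimplification}: the infimum in \eqref{eqn:defnenergy} runs over \emph{smooth} equivariant maps, so the step $E(\rho_\infty') \le E(\tilde f_\infty)$ tacitly requires smoothing the $C^{1,\alpha}$ limit within its equivariance class, which is harmless but worth a word.
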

 \begin{proof}
  Let $\rho_t \to \rho_\infty$ be a converging sequence. Firstly, if we assume that $\rho_t$ and $\rho_\infty$ are semisimple and that there exist a converging family $n_t \to n_\infty$ such that $(n_t, \rho_t) \in Y$ we can conclude at once by the proof of Proposition \ref{prop:Hcontinuous}, since $W^{1,2}$-convergence implies convergence of the energies.
  
  Secondly, suppose only that $\rho_t$ and $\rho_\infty$ are semisimple, and let $n_\infty$ be such that $(n_\infty, \rho_\infty) \in Y$. Then there are $g_t \in G$ such that $\tilde \rho_t = g_t \rho_t g_t^{-1}$ verify $(n_\infty, \tilde \rho_\infty) \in Y$. Proceeding as in Lemma \ref{lemma:energysemisimplification}, a subsequence of this converges to some $\tilde \rho_\infty$, which is semisimple. Thus $\tilde \rho_\infty$ is conjugated to $\rho_\infty$, since the quotient of semisimple representations by conjugation is a Hausdorff space. Then $E(\rho_t) \to E(\rho_\infty)$ follows from the first point.
  
  Now proceed to the general case. Without loss of generality, we may suppose $G = \GL(n, \C)$. Denote by $\rho_t^{ss}$ and $\rho_\infty^{ss}$ the corresponding semisimplifications, and by $\{\rho_t^{ss}\}$ and $\{\rho_\infty^{ss}\}$ the closed points of $\MB(M, G)$ they represent. Since the functions on $\MB(M, G)$ are generated by the traces, and $\trace(\rho_t) = \trace(\rho_t^{ss})$ converges to $\trace(\rho_\infty) = \trace(\rho_\infty^{ss})$, we have convergence of the closed points in $\MB(M, G)$. By the Kempf--Ness theorem \cite{KeNe78}, $\MB(M, G) \cong \mu^{-1}(0)/K$, with $\mu^{-1}(0) \subseteq \scrR(\Gamma, G)^{ss}$. Lifting the closed points to some $\tilde \rho_t^{ss}$, which must then be conjugated to $\rho_t^{ss}$, by properness of $\mu^{-1}(0) \to \mu^{-1}(0)/K$, there is a subsequence converging to some $\tilde \rho_\infty^{ss}$, which must be conjugated to $\rho_\infty^{ss}$, as well. Then we conclude thanks to the second part and Lemma \ref{lemma:energysemisimplification}.
 \end{proof}

 Remark that, on the locus of the Zariski dense representations, these results are trivially implied by the following:
 \begin{prop}[\cite{Co91}, Proposition 2.3]
  Let $R$ be an irreducible component of $\scrR(\Gamma, G)$, and give its smooth part $R^{sm}$ the $\Cinfty$ structure induced by the reduced structure on $R$. Denote by $U$ the (possibly empty) open subset of $R^{sm}$ such that $\Image(\rho) \subset G$ is Zariski dense. Then, the restriction of $\scrH$ to $Y \cap (N \times U)$ is smooth, hence the same is true for the energy functional on $U$.
 \end{prop}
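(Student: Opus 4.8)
The plan is to derive smoothness of $\scrH$ from the implicit function theorem applied to the equivariant harmonic-map operator, the decisive analytic input being the invertibility of the associated Jacobi operator precisely at Zariski-dense representations. On $R^{sm}$ the reduced structure makes $R$ a manifold, so $U \subseteq R^{sm}$ is an honest (finite-dimensional) smooth parameter space. First I would fix $p > \dim M$ and $k \geq 0$ and set up the problem in Sobolev completions: over $\rho \in U$ the $\rho$-equivariant maps $f \colon \tM \to N$ of class $W^{k+2,p}$ form a Banach manifold modelled on the $\Gamma$-equivariant $W^{k+2,p}$-sections of $f^*TN$, and since the local trivializations of Lemma \ref{lemma:semicontinuity} can be chosen smoothly in $\rho$ these assemble into a smooth Banach bundle over $U$. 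The tension field $(\rho, f) \mapsto \tau(f) = \sum_j \nN_{E_j}\de f(E_j)$ is then a smooth section of the Banach bundle of $W^{k,p}$-sections of $f^*TN$, and $Y \cap (N \times U)$, together with its tautological harmonic map, is cut out by the two equations $\tau(f) = 0$ and $f(\tx_0) = n$.

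The derivative of $\tau$ in the $f$-direction at a harmonic map is the second-variation (Jacobi) operator
\[
\calJ v = -\sum_j \nN_{E_j}\nN_{E_j} v - \sum_j R^N\big(v, \de f(E_j)\big)\de f(E_j),
\]
a self-adjoint, elliptic, second-order operator on the compact manifold $M$. The implicit function theorem will produce a smooth local solution $\rho \mapsto f_\rho$ as soon as $\calJ$ (suitably complemented by the base-point condition) is an isomorphism, so everything reduces to computing $\ker \calJ$ on equivariant sections. Every Killing field $\piTN(\,\cdot\,,\xi)$ of $N$ commuting with the $\rho$-action --- that is, $\xi \in \Lie H$ with $H = Z_G(\Image(\rho))$ --- restricts along $f$ to an equivariant Jacobi field, because ambient isometries carry harmonic maps to harmonic maps. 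Conversely, since $N = G/K$ has nonpositive curvature, the Bochner identity gives for any equivariant Jacobi field $v$
\[
0 = \int_M \langle \calJ v, v\rangle \de \Vol = \int_M \|\nN v\|^2 \de \Vol - \int_M \big\langle R^N(v,\de f)\de f, v\big\rangle \de \Vol \geq \int_M \|\nN v\|^2 \de \Vol,
\]
forcing $v$ to be parallel and the curvature term to vanish; one checks these are exhausted by the Killing fields above, so $\ker \calJ \cong \Lie H$.

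The main obstacle is exactly this kernel computation and, crucially, the use of Zariski-density to control it: when $\Image(\rho)$ is Zariski-dense one has $H = Z_G(\Image(\rho)) = Z(G)$, whence $\ker \calJ$ is only the infinitesimal action of the center, which is precisely the tangent space to the fibre of $Y \to U$ (the $H$-orbit of admissible base points $n$). At \emph{non}-Zariski-dense representations $H$ is strictly larger, extra Jacobi fields appear, and smoothness genuinely fails --- so the hypothesis is essential, and this is where the real work lies. Granting the identification, the combined linearization of $\big(\tau, f \mapsto f(\tx_0)\big)$ is an isomorphism transverse to the center directions, so the implicit function theorem shows that $Y \cap (N \times U)$ is a smooth submanifold and that the tautological harmonic map depends smoothly on $(n,\rho)$, the $n$-direction being covered by the smooth $Z(G)$-action; this is exactly the smoothness of $\scrH$. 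Finally, smoothness of the energy on $U$ is immediate, since $E(\rho) = \frac{1}{2}\int_M \|\de \scrH(n,\rho,\cdot)\|^2 \de \Vol$ is the integral of an expression depending smoothly on the smoothly varying map $\scrH$.
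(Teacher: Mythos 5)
The paper offers no proof of this statement: it is quoted directly from Corlette (\cite{Co91}, Proposition 2.3). Your implicit-function-theorem argument is essentially the proof given in that reference, so you have reconstructed the intended argument rather than found a different one; the overall strategy (Sobolev setup, linearization equal to the Jacobi operator, identification of its kernel with centralizer directions, Zariski density to shrink the centralizer to $Z(G)$) is correct. Two points deserve tightening. First, the kernel of $\calJ$ on equivariant sections is not $\Lie H = \h$ but its self-adjoint part $\hp = \h \cap \p$: under $\beta_N$ a field in the kernel corresponds to a flat section $\xi \in \h = H^0(M, \Ad(\rho_0))$ (your Bochner argument gives $\Dcan \xi = 0$ and $[\beta,\xi]=0$), but the $[\k]$-component of such a section yields the \emph{zero} vector field along $f$, so only $\hp$ survives. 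This is harmless, since for Zariski-dense $\rho$ one gets $\hp = \z(\g)\cap\p$, which is indeed the tangent space to the fibre of $Y \to U$. Second, the surjectivity needed for the implicit function theorem is glossed over: $\calJ$ is elliptic and formally self-adjoint, so its cokernel is isomorphic to its kernel, and when $Z(G)$ is noncompact this cokernel is nonzero; moreover the $\rho$-derivative of $\tau$ cannot cover it, since Theorem \ref{thm:firstorder} shows the linearized equation is always solvable, i.e.\ that derivative already lands in $\image(\calJ)$. One must therefore either observe that $\tau(f)$ is automatically $L^2$-orthogonal to the Killing fields coming from $\h$ (the energy is constant along those isometries), so the equation may be posed with target $(\ker\calJ)^{\perp}$ where $\calJ$ is surjective, or reduce to the semisimple part of $G$, where $\calJ$ is invertible and the IFT applies directly. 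Finally, your aside that smoothness ``genuinely fails'' at non-Zariski-dense representations is neither needed nor automatic: $Z_G(\Image(\rho))$ can equal $Z(G)$ without $\rho$ being Zariski dense, so the hypothesis is sufficient but not obviously sharp.
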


 \section{First order deformations}\label{sec:firstorder}
 \begin{defn}
  A \emph{first order deformation} $v$ of a map $f \colon \tM \to N$ is a smooth section of the bundle $f^*TN$.
 \end{defn}
 We will often denote $v$ by $\dev{f_t}{t}\big|_{t=0}$. Clearly, when $f_t$ is defined and smooth for a real parameter $t$, this gives a class of examples of first order deformations $v$; but interpreting $t$ as a formal parameter (seeing $N$ as the set of real points of an algebraic variety, and $TN$ as the set of $\R[t]/(t^2)$-points of the same variety) we can work in greater generality (e.g. allowing obstructions).
 \begin{defn}
  A first order deformation $\rhotfirst$ of $\rho_0 \colon \Gamma \to G$ is a representation $\rhotfirst \colon \Gamma \to TG$ projecting to $\rho_0$ via $TG \to G$.
 \end{defn}
 Here, $TG$ is given the group structure induced by $\G(\R[t]/(t^2))$, where $G = \G(\R)$. Explicitly (see \cite{BrSu72}), we can write elements of $TG$ as pairs $(g, \xi)$ with $g \in G$ and $\xi \in \g$, with the product structure $(g, \xi) \cdot (h, \eta) = (gh, \xi + \Ad_g\eta)$. Then we have $\rhotfirst = (\rho_0, c)$ with $c$ a 1-cocycle of the adjoint representation, i.e. $c \colon \Gamma \to \g$ satisfies
 $$
 c(\gamma \eta) = c(\gamma) + \Ad_{\rho_0(\gamma)} c(\eta).
 $$
 Again, starting from a family $\rho_t$ of representations, we obtain its first order deformation by defining $c(\gamma) = \dev{\rho_t(\gamma)}{t}\big|_{t=0} \rho_0(\gamma)^{-1}$.
 
 We are interested in the following problem: Given a harmonic, $\rho_0$-equivariant map $f \colon \tM \to N$ and a first order deformation $\rhotfirst$ of $\rho_0$, can we describe first order deformations which remain ``harmonic'' and ``$\rhotfirst$-equivariant'' to the first order? We first have to define such terms, introducing in passing auxiliary functions $F \colon \tM \to \g$.
 \begin{defn}\label{defn:vequivariant}
  A first order deformation $v$ of $f$ is $\rhotfirst$-equivariant if, and only if, it is for the action of $TG$ on $TN$ as in \cite{BrSu72}. Explicitly, this writes
  $$
  v(\gamma \tx) = \rho(\gamma)_* v(\tx) + \piTN\big(f(\gamma \tx), c(\gamma)\big).
  $$
 \end{defn}
 \begin{defn}
  A function $F \colon \tM \to \g$ is $\rhotfirst$-equivariant if $(f, F)$ is, under the left action $TG \cong G \times \g \acts G \times \g / K \cong N \times \g$. Explicitly, this means
  $$
  F(\gamma \tx) = \Ad_{\rho_0(\gamma)} F(\tx) + c(\gamma).
  $$
 \end{defn}
 \begin{lemma}\label{lemma:jacobi}
  If $f_t \colon \tM \to N$ is a family of $\rho_t$-equivariant maps, then $\dev{f_t}{t}\big|_{t=0}$ is $\rhotfirst$-equivariant. If $F$ is a $\rhotfirst$-equivariant function, defining $v = \piTN(f, F)$ gives a $\rhotfirst$-equivariant first order deformation of $f$.
 \end{lemma}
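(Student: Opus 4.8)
The plan is to prove the two assertions separately, both by direct computation, after isolating the one geometric fact that drives everything: the $G$-equivariance of $\piTN$. Concretely, for every $g \in G$, $n \in N$ and $\xi \in \g$ I claim
$$
\piTN(g \cdot n, \Ad_g \xi) = g_* \piTN(n, \xi),
$$
where $g_*$ is the differential of the action of $g$ on $N$. This follows at once from the definition $\piTN(n,\xi) = \dev{}{t}\big(\exp(t\xi)\cdot n\big)\big|_{t=0}$ together with $\exp(t\Ad_g\xi) = g\exp(t\xi)g^{-1}$: one has $\exp(t\Ad_g\xi)\cdot(g\cdot n) = g\cdot\big(\exp(t\xi)\cdot n\big)$, and differentiating in $t$ at $t=0$ gives the claim. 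I would also record that $\piTN(n,\cdot)$ is linear in its second variable, being the differential at the identity of the orbit map $g \mapsto g\cdot n$; this linearity is needed for the second assertion.

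For the first assertion I would differentiate the equivariance relation $f_t(\gamma\tx) = \rho_t(\gamma)\cdot f_t(\tx)$ at $t=0$. The left side yields $v(\gamma\tx)$. On the right I apply the product rule to the action map $G\times N \to N$, $(g,n)\mapsto g\cdot n$, splitting the derivative into the contribution from the $N$-slot, namely $\rho_0(\gamma)_* v(\tx)$ (the differential at $f(\tx)$ of the action of $\rho_0(\gamma)$ applied to $v(\tx) = \dev{f_t}{t}\big|_{t=0}(\tx)$), and the contribution from the $G$-slot with $f_t$ frozen at $f = f_0$. For the latter, the relation $\dev{\rho_t(\gamma)}{t}\big|_{t=0} = c(\gamma)\rho_0(\gamma)$ gives $\rho_t(\gamma) = \exp\big(tc(\gamma)\big)\rho_0(\gamma) + O(t^2)$, hence $\rho_t(\gamma)\cdot f(\tx) = \exp\big(tc(\gamma)\big)\cdot f(\gamma\tx) + O(t^2)$ by $\rho_0$-equivariance of $f$; differentiating and using the definition of $\piTN$ produces exactly $\piTN\big(f(\gamma\tx), c(\gamma)\big)$. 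Summing the two contributions gives the identity of Definition \ref{defn:vequivariant}.

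For the second assertion, set $v(\tx) = \piTN\big(f(\tx), F(\tx)\big)$ with $F$ satisfying $F(\gamma\tx) = \Ad_{\rho_0(\gamma)}F(\tx) + c(\gamma)$. I would compute
$$
v(\gamma\tx) = \piTN\big(f(\gamma\tx),\, \Ad_{\rho_0(\gamma)}F(\tx) + c(\gamma)\big),
$$
split off the $c(\gamma)$ term using linearity of $\piTN$ in the second slot, and apply the key identity above with $g = \rho_0(\gamma)$, $n = f(\tx)$, $\xi = F(\tx)$, together with $f(\gamma\tx) = \rho_0(\gamma)\cdot f(\tx)$. This yields $v(\gamma\tx) = \rho_0(\gamma)_* v(\tx) + \piTN\big(f(\gamma\tx), c(\gamma)\big)$, which is precisely the $\rhotfirst$-equivariance of $v$ in the sense of Definition \ref{defn:vequivariant}.

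No step is analytically deep; the only point requiring care is the bookkeeping in the first assertion, namely correctly separating the two directional derivatives of the action map and matching the left/right conventions in $c(\gamma) = \dev{\rho_t(\gamma)}{t}\big|_{t=0}\rho_0(\gamma)^{-1}$ with the sign and argument of $\piTN$. Once the equivariance identity for $\piTN$ is established, both halves of the lemma are immediate.
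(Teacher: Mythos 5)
Your proof is correct: the $G$-equivariance identity $\piTN(g \cdot n, \Ad_g \xi) = g_*\piTN(n,\xi)$ together with linearity of $\piTN(n,\cdot)$ is exactly the right mechanism, and both computations check out against Definition \ref{defn:vequivariant}. The paper simply declares the proof of both statements immediate and omits it, so your argument is a correct fleshing-out of the intended direct verification.
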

 The proof of both statements is immediate.
 \begin{defn}[See \cite{Ma73},\cite{EeLe83}]\label{defn:vharmonic}
  The \emph{Jacobi operator} $\calJ \colon \Cinfty(f^*TN) \to \Cinfty(f^*TN)$ is defined in terms of a orthonormal local frame $\{E_j\}$ as
 \begin{align*}
  \calJ(v) \overset{\textnormal{loc}}{=} -\sum_{j} \Big( \nN_{E_j} \nN_{E_j} v + R^N\big(\de(f)(E_j), v\big) \de(f)(E_j)\Big),
 \end{align*}
 where $R^N$ is the curvature of the Levi-Civita connection on $N$. A first order deformation $v$ is said to be harmonic if $\calJ(v) = 0$.
 \end{defn}
 
 \begin{notation}\label{not:pullbackadjoint}
  Given a representation $\rho_0 \colon \Gamma \to G$ and a harmonic $\rho_0$-equivariant map $f \colon \tM \to N$, we will denote by $\calV$ the vector bundle underlying the phls induced on $M$ as in Example \ref{ex:adjoint}. The corresponding local system will be denoted by $\V = \Ad(\rho_0)$. Recall that in this case $\calV = \tM \times_\Gamma \g$, and, slightly abusing notations, we will write $\beta \in \calA_M^1(\g)$ for the 1-form induced by $\tbeta = f^*\beta_N$ (so that what we called $\beta$ in Section \ref{sec:phls} would be, in present notations, $\ad(\beta)$). For the sake of brevity, we will write $\scal{\xi}{\eta}$ for the metric $h(\xi, \eta)$. Recall that $\beta_N \circ \nN = \ncan \circ \beta_N$, where $\nN$ is the pull-back connection of the Levi-Civita connection on $f^*TN$. It follows that $\beta_N \circ \calJ = J \circ \beta_N$, where $J$ is defined as in \eqref{eqn:laplacian}. Coherently with Notation \ref{not:projection}, we may speak of projections $\xi_{\tx}^{[\p]}$, etc.
 \end{notation}

 \begin{lemma}
  Let $f_t \colon \tM \to N$ be a family of harmonic maps, varying smoothly in $t$. Then $v = \dev{f_t}{t}\big|_{t=0}$ is a harmonic first order deformation.
 \end{lemma}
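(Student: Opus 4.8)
The plan is to recognize $v$ as the variation field of the family and to identify the $t$-derivative of the tension field with the Jacobi operator, so that the harmonicity $\tau(f_t)\equiv 0$ forces $\calJ(v)=0$. Concretely, I would assemble the family into a single smooth map $\Phi\colon \tM\times\ee\to N$, $\Phi(\tx,t)=f_t(\tx)$, and set $V=\de\Phi(\partial_t)$, a section of $\Phi^*TN$ whose restriction to the slice $t=0$ is $v$. I work with the pull-back connection $\nN$ of the Levi-Civita connection of $N$ on $\Phi^*TN$ over the product $\tM\times\ee$ (it restricts at $t=0$ to the connection of Notation \ref{not:pullbackadjoint}). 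The two structural properties I shall use are torsion-freeness, $\nN_X\de\Phi(Y)-\nN_Y\de\Phi(X)=\de\Phi([X,Y])$, and the commutation identity relating $\nN_X\nN_Y-\nN_Y\nN_X-\nN_{[X,Y]}$ to $R^N(\de\Phi(X),\de\Phi(Y))$, valid for vector fields $X,Y$ on $\tM\times\ee$ and sections of $\Phi^*TN$.

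Next I would fix a point of $\tM$ and choose a local orthonormal frame $\{E_j\}$ of $\tM$ that is geodesic at that point and independent of $t$, so that $[\partial_t,E_j]=0$ everywhere and $\nabla^{\tM}_{E_j}E_j=0$ at the chosen point. Since the $E_j$ are tangent to the slices, the tension field of $f_t$ is $\tau(f_t)=\sum_j\nN_{E_j}\de\Phi(E_j)$ at the point, and this is a genuine section over $\tM\times\ee$. Differentiating, $\covt\tau(f_t)=\sum_j\nN_{\partial_t}\nN_{E_j}\de\Phi(E_j)$; commuting $\nN_{\partial_t}$ past $\nN_{E_j}$ by the curvature identity (the bracket term drops since $[\partial_t,E_j]=0$) and then exchanging $\nN_{\partial_t}\de\Phi(E_j)=\nN_{E_j}\de\Phi(\partial_t)=\nN_{E_j}V$ by torsion-freeness yields
\begin{equation*}
\covt\tau(f_t)=\sum_j\Big(\nN_{E_j}\nN_{E_j}V+R^N\big(\de\Phi(E_j),V\big)\de\Phi(E_j)\Big).
\end{equation*}
Evaluated at $t=0$, where $V=v$ and $\de\Phi(E_j)=\de f(E_j)$, the right-hand side is exactly $-\calJ(v)$ for the curvature sign convention fixed in Definition \ref{defn:vharmonic}.

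Finally, harmonicity of every $f_t$ means $\tau(f_t)\equiv 0$ as a section over $\tM\times\ee$, so its covariant $t$-derivative vanishes identically; in particular $\covt\tau(f_t)\big|_{t=0}=0$, and comparison with the display gives $\calJ(v)=0$, i.e. $v$ is a harmonic first order deformation. I expect the only delicate point to be the bookkeeping with the pull-back connection over the product $\tM\times\ee$ rather than over $\tM$: one must be careful that choosing the frame $t$-independent is what makes the commutation of $\nN_{\partial_t}$ with $\nN_{E_j}$ produce exactly one curvature term and makes the swap $\nN_{\partial_t}\de\Phi(E_j)=\nN_{E_j}V$ hold, and one must match the convention for $R^N$ with the one used to define $\calJ$; since the conclusion only uses that $\covt\tau$ vanishes, the overall sign is in fact irrelevant to the statement.
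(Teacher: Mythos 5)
Your argument is correct and is essentially the paper's own proof: both differentiate the identity $\tau(f_t)=0$ covariantly in $t$ and use the symmetry (torsion-free) relation together with the curvature commutation identity to identify $\covt\tau(f_t)\big|_{t=0}$ with $-\calJ(v)$. The extra bookkeeping you supply (working over $\tM\times\ee$ with a $t$-independent frame) just makes explicit what the paper's citation of the standard symmetry lemmas covers.
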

 \begin{proof}
  We simply differentiate the identities $\tau(f_t) = 0$ covariantly along $t$. Write for short $\covt$ for $\nN_{\dev{}{t}}$; then, with respect to a fixed local orthonormal frame $\{E_s\}$, using the ``symmetry relations'' (cfr. \cite{doCa92}, Chap. 3, Lemma 3.4 and Chap. 4, Lemma 4.1), we have:
  \begin{align*}
    \frac{D}{\partial t} \sum_s (\nN_{E_s} \de f_t(E_s))\big|_{t=0} &=  \sum_s \frac{D}{\partial t} \nN_{E_s} \de f_t(E_s) \big|_{t=0}\\
  &= \sum_s \nN_{E_s} \frac{D}{\partial t} \de f_t(E_s) \big|_{t=0} + R^N(\de f(E_s), v) \de f(E_s)\\
  &= \sum_s \nN_{E_s} \nN_{E_s} v + R^N(\de f(E_s), v) \de f(E_s)= -\calJ(v).
  \end{align*}
 \end{proof}

 Remark that we can extend the definition of $J$ in \eqref{eqn:laplacian} to general function $\xi$ simply by \emph{defining} $\Delta^{\can}\xi = -\sum_j \ncan_{E_j} \ncan_{E_j} \xi$.
 \begin{defn}
  A function $F \colon \tM \to \g$ is called ``of harmonic type'' if $J(F) = 0$.
 \end{defn}
 Then, defining $v = \piTN(f, F)$ we can map functions of harmonic type to harmonic first order deformations: If $J(F) = 0$,
 $$
 \beta_N \big(\calJ(v)\big) = J\big(\beta_N(v)\big) = J\big(\Fp\big) = J(F)^{[\p]} = 0,
 $$
 since $J$ respects the decomposition $\tM \times \g = [\k] \oplus [\p] = \calV^+ \oplus \calV^-$.

 \begin{notation}\label{not:omega}
  Let $f$ be a $\rho_0$-equivariant harmonic map, $\rhotfirst$ a first order deformation of $\rho_0$. We denote by $\omega \in \calH^1(M, \Ad(\rho_0))$ the harmonic 1-form representing the 1-cohomology class given by $\{c\} \in H^1(\Gamma, \g) \cong H^1(M, \Ad(\rho_0))$. Keeping the same notation as in \ref{not:G0}, denote by $\h$ the Lie algebra of $H$, and remark that $\h = H^0(M, \Ad(\rho_0))$ is the space of global sections of $\Ad(\rho_0)$; hence by point \eqref{item:globalsections} of Proposition \ref{prop:phls}, it splits as a direct sum which we denote $\h = \hk \oplus \hp$, since $\h^+ = \hk = \h \cap \k$ and $\h^- = \hp = \h \cap \p$.
 \end{notation}

 The main result about first order deformations is then the following:
 \begin{theorem}\label{thm:firstorder}
  Let $M$ be a compact Riemannian manifold, $\G$ an algebraic reductive group, $G = \G(\R)$ the Lie group of its real points, $\rhotfirst = (\rho_0, c) \colon \pi_1(M) \to G$ a first-order deformation of $\rho_0$ and $f \colon \tM \to N$ a harmonic and $\rho_0$-equivariant map. Then the set of $\rhotfirst$-equivariant $F$ such that $\de F = \omega$ is non-empty; in fact, it is an affine space over $\h$, and every harmonic first order deformation $v$ is constructed as $\piTN(f, F)$. More precisely, the map:
  $$
  \left\{
  \begin{array}{r}
  F \colon \tM \to \g \ : \ \de F = \omega \text{ is harmonic }\\
  \text{and } F(\gamma \tx) = \Ad_{\rho_0(\gamma)}F(\tx) + c(\gamma)
  \end{array}
  \right\}
  \xrightarrow{\piTN}
  \left\{
  \begin{array}{c}
   v \in \Cinfty(f^*TN) \text{ harmonic}\\
   \text{and } \rhotfirst\text{-equivariant}.
  \end{array}
  \right\}
  $$
  is affine and surjective, and corresponds to the linear projection on the associated vector spaces:
  $$
  \h = H^0(M, \Ad(\rho_0)) \longrightarrow H^0(M, \Ad(\rho_0)) \cap \p = \hp.
  $$
 \end{theorem}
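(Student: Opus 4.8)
The plan is to establish the three assertions in turn: non-emptiness and the affine-over-$\h$ structure of the source, well-definedness of the arrow (that $\piTN(f,F)$ is indeed harmonic and $\rhotfirst$-equivariant), and finally surjectivity together with the identification of the underlying linear map.

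First I would produce \emph{some} affinely equivariant function. The condition $F(\gamma\tx) = \Ad_{\rho_0(\gamma)}F(\tx)+c(\gamma)$ exhibits $F$ as a section of the affine bundle on $M$ modelled on $\calV = \Ad(\rho_0)$ with translation part recorded by $c$; such a bundle always admits a smooth section by a partition-of-unity argument on the compact base $M$, so the affine space $P$ of such $F$ is non-empty. For any $F_0\in P$ the form $\de F_0$ is equivariant in the genuine (untwisted) sense, hence descends to a $D$-closed $\calV$-valued $1$-form on $M$ whose class in $H^1(M,\Ad(\rho_0))$ is exactly the image of $\{c\}$ under $H^1(\Gamma,\g)\cong H^1(M,\Ad(\rho_0))$; as $\omega$ is by definition the harmonic representative of the same class, $\omega - \de F_0 = Dg$ for a section $g$ of $\calV$, and replacing $F_0$ by $F_0 + \tilde g$ (with $\tilde g$ the equivariant lift of $g$) yields $\de F = \omega$. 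Two admissible $F,F'$ differ by a genuinely equivariant $g$ with $\de g = 0$, i.e. by a flat global section; since these form $H^0(M,\Ad(\rho_0)) = \h$, the admissible $F$ form an affine space over $\h$.

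Next I would check the arrow lands where claimed. Equivariance of $v = \piTN(f,F)$ is Lemma \ref{lemma:jacobi}. For harmonicity I use the identity $\beta_N\circ\calJ = J\circ\beta_N$ of Notation \ref{not:pullbackadjoint} together with $\beta_N(v) = \Fp$, which reduces $\calJ(v)=0$ to $J(\Fp) = J(F)^{[\p]} = 0$; here I invoke that $J$ preserves the splitting $[\k]\oplus[\p]$. To compute $J(F)$ I note that, since $f$ is harmonic, the derivation of point \eqref{item:laplacian} of Proposition \ref{prop:phls} gives $J(F) = \de^*\de F$ as a pointwise operator, and $\de F = \omega$ is harmonic, so $\de^*\de F = \de^*\omega = 0$. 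Thus $\calJ(v)=0$, i.e. $v$ is harmonic.

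Finally, for surjectivity and the linear part I would reduce to the homogeneous problem by taking the difference $u = v - v_1$ of an arbitrary harmonic $\rhotfirst$-equivariant $v$ with the image $v_1 = \piTN(f,F_1)$ of a fixed admissible $F_1$; then $u$ is a harmonic, genuinely $\rho_0$-equivariant section of $f^*TN$. Its image $\beta_N(u)$ is a section of $\calV^- = f^*[\p]$ with $J(\beta_N u) = 0$, hence $\Delta(\beta_N u) = 0$ by \eqref{eqn:laplacian}; the integration by parts used for point \eqref{item:globalsections} of Proposition \ref{prop:phls} then forces $\beta_N(u)$ to be flat, so $\beta_N(u)\in\h\cap[\p] = \hp$ and $u = \piTN(f,\xi)$ for a unique $\xi\in\hp$. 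Using linearity of $\piTN$ in the Lie-algebra slot and flatness of $\xi$, $v = \piTN(f, F_1+\xi)$ with $F_1+\xi$ admissible, which proves surjectivity; moreover the target is then an affine space over $\hp$, and the map on associated vector spaces sends $\xi\in\h$ to $\piTN(f,\xi)$, i.e. (via $\beta_N$) to $\xi^{[\p]}$, which is exactly the projection $\h = \hk\oplus\hp\to\hp$. The main obstacle is this last reduction: the crux is the rigidity statement that a harmonic, $\rho_0$-equivariant infinitesimal deformation must be parallel, which rests on identifying the Jacobi operator with the Laplacian $J$ (valid precisely because $f$ is harmonic) and on compactness of $M$ to turn $\Delta$-harmonicity into flatness; everything else is the affine bookkeeping of cocycles against harmonic representatives.
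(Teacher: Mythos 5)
Your proof is correct and follows essentially the same route as the paper's: the affine-space-over-$\h$ structure of the set of admissible $F$ (the paper's Lemma \ref{lemma:existence}, which you reprove by first sectioning the affine bundle and then correcting by an exact term), the transfer of harmonicity through $\beta_N\circ\calJ = J\circ\beta_N$ and the compatibility of $J$ with the splitting $[\k]\oplus[\p]$, and the rigidity step that a harmonic, genuinely $\rho_0$-equivariant deformation is flat by integration by parts on the compact $M$, hence lies in $\hp$. The identification of the linear part with the projection $\h\to\hp$ is likewise the paper's argument.
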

 
 Remark that, since the isomorphism $H^1(M, \Ad(\rho_0)) \cong H^1(\Gamma, \g)$ is induced by integration, if $F$ is of harmonic type and $\rhotfirst$-equivariant then $\int_{\tx_0}^{\gamma \tx_0} \de F = c(\gamma) + \delta(F(\tx_0))(\gamma)$, where $\delta$ denotes the codifferential of group cohomology, hence necessarily $\de F = \omega$. The fact that $\rhotfirst$-equivariant primitives $F$ of $\omega$ exist, and that they form an affine space over $\h$ is a consequence of the following lemma:
 \begin{lemma}\label{lemma:existence}
  Let $V$ be a fixed vector space of finite dimension, and $\tau \colon \Gamma \to \GL(V)$ a representation. Denote by $\V$ the associated local system and let $\phi \in Z^1(M, \V)$ be a closed 1-form (which we think of as a $\tau$-equivariant closed 1-form on $\tM$); let $z \in Z^1(\Gamma, V)$ be a 1-cocycle such that the cohomology classes of $\phi$ and $z$ correspond through the isomorphism $H^1(M, \V) \cong H^1(\Gamma, V)$. Then the set
  \begin{equation}\label{eqn:torsorF}
    \Big\{ F \colon \tM \to V \ :\ \de F = \phi \textnormal{ and } F(\gamma \tx) = \tau(\gamma)\cdot F(\tx) + z(\gamma) \Big\}
  \end{equation}
  is an affine space over $V^\Gamma = H^0(M, \V)$.
 \end{lemma}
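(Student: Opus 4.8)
\emph{Proof proposal.} The plan is to split the statement into two independent parts: (i) granting that the set \eqref{eqn:torsorF} is non-empty, showing that $V^\Gamma$ acts freely and transitively on it (the affine structure), and (ii) establishing non-emptiness, which is the only substantial point. The first part is elementary and I would dispatch it directly; the second part rests on the comparison between the primitive of $\phi$ and the cocycle $z$, and that comparison is where the real content sits.

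For the affine structure, I would take two elements $F_1, F_2$ of \eqref{eqn:torsorF}. Since $\de(F_1 - F_2) = \phi - \phi = 0$ and $\tM$ is connected, $F_1 - F_2$ is a constant vector $v_0 \in V$. Subtracting the two equivariance relations gives $v_0 = (F_1 - F_2)(\gamma\tx) = \tau(\gamma)\,(F_1 - F_2)(\tx) = \tau(\gamma) v_0$ for all $\gamma \in \Gamma$, so $v_0 \in V^\Gamma$. Conversely, for any $v_0 \in V^\Gamma$ and any $F$ in the set, the map $F + v_0$ still satisfies $\de(F + v_0) = \phi$, and, using $\tau(\gamma) v_0 = v_0$, it also satisfies the required equivariance relation. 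This exhibits a free and transitive action of $V^\Gamma = H^0(M, \V)$ on the set, so once non-emptiness is known, the set is an affine space over $V^\Gamma$.

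For non-emptiness I would integrate: as $\tM$ is simply connected and $\phi$ is closed, it admits a primitive $F_0 \colon \tM \to V$ with $\de F_0 = \phi$, unique up to an additive constant. The failure of $F_0$ to be equivariant is measured by $c(\gamma) := F_0(\gamma\tx) - \tau(\gamma) F_0(\tx)$. Differentiating this and using the equivariance $\gamma^*\phi = \tau(\gamma)\phi$ of the pulled-back form shows $\de\big(c(\gamma)\big) = 0$, so $c(\gamma)$ is independent of $\tx$; a short computation with the relation $F_0(\gamma\eta\,\tx) = \tau(\gamma) F_0(\eta\tx) + c(\gamma)$ then verifies that $c \in Z^1(\Gamma, V)$. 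Granting that $c$ and $z$ are cohomologous (see below), write $c - z = \delta v_0$ for some $v_0 \in V$, where $(\delta v_0)(\gamma) = \tau(\gamma) v_0 - v_0$. Setting $F := F_0 + v_0$ then gives $F(\gamma\tx) = \tau(\gamma) F(\tx) + \big(c(\gamma) - \tau(\gamma) v_0 + v_0\big) = \tau(\gamma) F(\tx) + z(\gamma)$, so $F$ lies in \eqref{eqn:torsorF}.

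The main obstacle is precisely the claim that $c$ and $z$ are cohomologous. What must be pinned down is that the cocycle $c$ attached to the primitive $F_0$ represents the image of $[\phi]$ under the isomorphism $H^1(M, \V) \cong H^1(\Gamma, V)$; this is exactly the assertion that this comparison isomorphism is ``induced by integration,'' recorded in the remark preceding the lemma. Once this is accepted, the hypothesis that the classes of $\phi$ and $z$ correspond means literally that $c$ and $z$ differ by a coboundary, which is all that the construction above uses. (If one wishes to avoid invoking the explicit shape of the isomorphism, one may instead \emph{define} the map $H^1(M,\V) \to H^1(\Gamma, V)$ by $[\phi] \mapsto [c]$ and observe it is an isomorphism, so that the compatibility hypothesis again says $c$ and $z$ are cohomologous.) Everything else in the argument is routine.
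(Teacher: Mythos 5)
Your proposal is correct and follows essentially the same route as the paper: take any primitive $F_0$ of $\phi$, observe that its equivariance defect $\gamma \mapsto F_0(\gamma\tx) - \tau(\gamma)F_0(\tx)$ is a well-defined cocycle cohomologous to $z$ (because the comparison isomorphism is given by integration, i.e.\ by $\gamma \mapsto \int_{\tx_0}^{\gamma\tx_0}\phi$), correct $F_0$ by a constant to make the defect equal $z$, and note that the remaining ambiguity is exactly $V^\Gamma$. The only cosmetic difference is that you separate the affine-structure verification from the non-emptiness argument, whereas the paper folds them together.
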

 \begin{proof}
  Taking any $F$ such that $\phi = \de F$, by equivariance the 1-cocycle $z_F$ defined by $z_F(\gamma) = F(\gamma \tx) - \tau(\gamma) \cdot F(\tx)$ is independent of $\tx$. By hypothesis, the cocycle $\gamma \mapsto \int_{\tx_0}^{\gamma \tx_0} \phi = F(\gamma \tx_0) - F(\tx_0)$ is cohomologous to $z$; but it is also cohomologous to $z_F$, since:
  $$
   F(\gamma \tx_0) - F(\tx_0) = z_F(\gamma) + \tau(\gamma) \cdot F(\tx_0) - F(\tx_0) = z_F(\gamma) + \delta\big(F(\tx_0)\big)(\gamma).
  $$
  Now if $\de F = \de \tilde F = \phi$, the difference between $F$ and $\tilde F$ is a fixed element $v \in V$, and the difference between $z_F$ and $z_{\tilde F}$ is the coboundary $\delta(v)$. Hence we can find one $F$ such that $z_F = z$ (that is, which is $\rhotfirst$-equivariant) and the difference of any two such choices must be $\Gamma$-invariant, as claimed.
 \end{proof}
 \begin{proof}[Proof of theorem \ref{thm:firstorder}]
  The lemma gives the first part, hence the existence of at least one harmonic $\rhotfirst$-equivariant first order deformation $v$. To conclude the proof, we only need to show that the difference of any two such deformations $v$, $v'$ is in $\hp$. Equivariance implies that $\xi = \beta_N(v - v')$ is $\Ad(\rho_0)$-equivariant; harmonicity implies $J(\xi) = 0$. Since $J = \de^*\de$, an integration by parts gives $\de(\xi) = 0$. Hence, $\xi \in \h$. On the other hand, both $\beta_N(v)$ and $\beta_N(v')$ are sections of $[\p]$, so $\xi \in \hp$, as claimed. Conversely, adding an element of $\hp$ to $\beta_N(v)$ gives another harmonic $\rhotfirst$-equivariant first-order deformation, hence the space of such deformations is affine over $\hp$ (and non-empty by the existence of $F$ above), as claimed.
 \end{proof}
 \begin{remark}
  One can easily prove that if $X$, $X'$ are K\"ahler manifolds and $\varphi \colon X' \to X$ is holomorphic, then the construction of theorem \ref{thm:firstorder} is functorial under pull-back by $\varphi$ (i.e. the surjective arrow fits in a square diagram with $\varphi^*$ as vertical arrows). The only non-trivial part are harmonicity of $f \circ \varphi$, which is classic (see \cite{ABCKT96}, chapter VI and \cite{Lo99}) and of $\varphi^*\omega$ (cfr. Example \ref{ex:kahler}).
 \end{remark}
 \begin{example}
  We see readily that when $G = \C^*$ is abelian, one gets back the usual abelian cohomology and harmonic functions. In this case $D = \Dcan$ is metric, and a representation $\rho_0 \colon \Gamma \to \C^*$ decomposes into a real and a unitary factor. The logarithm of the former gives a $1$-cohomology class on $M$, whose harmonic representative may be integrated to give a harmonic map $f \colon \tM \to N \cong \R$. Then $J = \Delta$ is the usual Laplace-Beltrami operator (up to a sign), hence functions $F$ of harmonic type are just harmonic complex functions; the projection $\piTN$ of theorem \ref{thm:firstorder} simply consists in taking the real part.
 \end{example}

 \section{$\C$-VHS as critical points of the energy}\label{sec:firstvariation}
 Let $f$ and $\rhotfirst$ be as above, and $v$ a $\rhotfirst$-equivariant deformation of $f$. Remark that the function defined on $\tM$ by $\scal{\nN v}{\de f}$ is $\Gamma$-invariant: Applying $\beta_N$ everywhere, at a point $\gamma \tx$ it equals
 \begin{equation}\label{eqn:firstderivative}
 \Bigscal{\Dcan \Big(\Ad_{\rho_0(\gamma)} \beta_N(v) + c(\gamma)_{\gamma \tx}^{[\p]}\Big)}{\Ad_{\rho_0(\gamma)}\tbeta}_{\gamma \tx}.
 \end{equation}
 Now the metric is $\Gamma$-equivariant, hence the first summand reduces to $\scal{\Dcan \beta_N(v)}{\tbeta}_{\tx}$, which is exactly the value of $\scal{\nN v}{\de f}$ at $\tx$. The second summand vanishes, because $\Dcan c(\gamma)^{[\p]} = -[\tbeta, c(\gamma)^{[\k]}]$, whose scalar product with $\tbeta$ equals the scalar product of $c(\gamma)^{[\k]}$ with $\trace[\tbeta, \tbeta] = 0$. Thus we can define:
 \begin{defn}
  Let $v$ be a $\rhotfirst$-equivariant first order deformation of $f$. We define the energy of $(f, v)$ as
  $$
  E(f, v) = E(f) + t \int_M \bigscal{\nN v}{\de f} \de \Vol \in \R[t]/(t^2).
  $$
 \end{defn}
 It is easy to see that when $f_t$ is $\rho_t$-equivariant, the two definitions for $\dev{E(f_t)}{t}\big|_{t=0}$ coincide.
 \begin{prop}\label{prop:firstvariation}
  Let $v$ be a harmonic and $\rhotfirst$-equivariant deformation of $f$. Then, keeping the notations in \ref{not:omega}, we have
  $$
  \dev{E(f,v)}{t}\Big|_{t=0} = \int_M \bigscal{\omega}{\beta} \de \Vol.
  $$
 \end{prop}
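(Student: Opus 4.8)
The plan is to reduce the whole computation to a pointwise Lie-algebra identity, with no integration by parts. By the definition of $E(f,v)$ the quantity to compute is $\dev{E(f,v)}{t}\big|_{t=0} = \int_M \scal{\nN v}{\de f}\de\Vol$, so everything comes down to identifying the integrand with $\scal{\omega}{\beta}$. The first move is to transport the pairing from $f^*TN$ into $\g$ via the isometry $\beta_N$. Using Theorem \ref{thm:firstorder} I write $v = \piTN(f,F)$ for an $\rhotfirst$-equivariant $F$ of harmonic type with $\de F = \omega$; then $\beta_N(v) = \Fp$ and $\beta_N(\de f) = \beta$, and the intertwining $\beta_N\circ\nN = \ncan\circ\beta_N$ recorded in Notation \ref{not:pullbackadjoint} turns $\scal{\nN v}{\de f}$ into $\scal{\ncan\Fp}{\beta}$ (all inner products carrying the implicit sum over a local orthonormal frame $\{E_j\}$).

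Next I would remove the projection onto $[\p]$. Since $\Dcan$ respects the splitting $\g = [\k]\oplus[\p]$ by point \eqref{item:Dcan} of Proposition \ref{prop:phls}, the form $\ncan\Fk$ takes values in $[\k]$, which is orthogonal to $\beta\in[\p]$; hence $\scal{\ncan\Fp}{\beta} = \scal{\Dcan F}{\beta}$. I then bring in the flat connection: from $D = \Dcan + \ad(\beta)$ and $DF = \de F = \omega$ one gets $\Dcan F = \omega - [\beta,F]$, so the integrand becomes $\scal{\omega}{\beta} - \scal{[\beta,F]}{\beta}$.

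The heart of the argument is the vanishing of the cross term $\scal{[\beta,F]}{\beta}$, and I expect it to vanish \emph{pointwise}. Writing the metric as $h(\cdot,\cdot) = S(\cdot,\sigma(\cdot))$, using $\sigma(\beta(E_j)) = -\beta(E_j)$ (as $\beta$ is $[\p]$-valued, i.e.\ selfadjoint) and the $\ad$-invariance of the Killing form $S$, each summand collapses to $-S([\beta(E_j),F],\beta(E_j)) = S(F,[\beta(E_j),\beta(E_j)]) = 0$, since $[\beta(E_j),\beta(E_j)] = 0$. The one point that needs care is the central factor $\a$, where $S$ is not the Killing form: but there all brackets vanish, so $[\beta,F]$ lands in the semisimple part and pairs trivially against the central directions. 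With the cross term gone, the two integrands coincide pointwise and integration yields the stated formula. The main obstacle is thus this Lie-theoretic cancellation --- getting the invariance of the polarizing form to do its job uniformly across the semisimple and central parts; notably the harmonic-map equation $\Dcan^*\beta = 0$ is not needed for the pointwise identity, entering only implicitly through the phls structure that guarantees $\de F = \omega$ for the harmonic-type primitive $F$.
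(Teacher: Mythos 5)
Your argument is correct and is essentially the paper's own proof run in the opposite direction: the paper starts from $\int_M\scal{\omega}{\beta}\de\Vol$, expands $\omega = \de F = \Dcan F + [\tbeta,F]$, kills the cross term via $\trace[\tbeta,\tbeta]=0$ (the same pointwise $\ad$-invariance cancellation you spell out, including implicitly the central factor), and uses orthogonality of $[\p]$ and $[\k]$ to land on $\scal{\Dcan\Fp}{\tbeta} = \scal{\nN v}{\de f}$. The extra care you take with the abelian part $\a$ and the observation that $\Dcan^*\beta=0$ is not needed are both consistent with, and slightly more explicit than, the paper's version.
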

 \begin{proof}
  By Theorem \ref{thm:firstorder}, we have $\Fp = \beta_N(v)$, for some $\rhotfirst$-equivariant $F \colon \tM \to \g$ such that $\de F = \omega$. By definition, then,
  $$
  \int_M \scal{\omega}{\beta} \de \Vol = \int_{\tM/\Gamma} \Big(\bigscal{\Dcan F}{\tbeta} + \bigscal{[\tbeta, F]}{\tbeta} \Big) \de \Vol.
  $$
  Now the second summand vanishes since $\trace[\tbeta,\tbeta] = 0$, as above. By orthogonality of $[\p]$ and $[\k]$, then, the first one equals $\scal{\Dcan \Fp}{\beta}$, which is what we wanted.
 \end{proof}

 \begin{example}
  When $M = X$ is a K\"ahler manifold, the expression is independent of the chosen metric in its K\"ahler class. Denoting by $\Omega$ the K\"ahler form in the notations of Example \ref{ex:kahler}, we can write the first variation of the energy as:
  $$
   \dev{E_t}{t}\Big|_{t=0} = \int_{X} \big\langle\omega \wedge *\beta\big\rangle = -\frac{1}{n!} \int_X \trace\big(\omega \wedge (\theta^* - \theta)\big) \wedge \Omega^{n-1}
  $$
  (since $*\beta = (\theta^*-\theta)\wedge \Omega^{n-1}$ takes values in the anti-selfadjoint part of $\g \otimes \C$). To prove the closeness of $\trace(\omega \wedge (\theta^* - \theta))$, one simply observes that $\de \omega = 0$ and $\de \theta = \de (\theta^*) = [\theta, \theta^*]$.
 \end{example}
 
 From now on, let $M = X$ be a K\"ahler manifold. We want to analyze the critical points of the energy functional on the representation space $\scrR(\Gamma, G)$. We start by two easy examples:
 \begin{example}\label{ex:C*}
  When $G = \C^*$, the formula of Proposition \ref{prop:firstvariation} is essentially trivial, at least for smooth families of harmonic functions $f_t$, since then $\omega = \dev{\beta_t}{t}\big|_{t=0}$ (in this case, $\beta_t = \de \log(f_t)$). Furthermore, when $M = X$ is smooth projective, the Dolbeault moduli space splits as $\MDol(X, \C^*) = \Pic^0(X) \oplus H^0(X, \Omega_X^1)$, so that the $\C^*$-action (see \cite{Si92}, \S 4) defined by $t \cdot (\calV, \theta) = (\calV, t\theta)$ has as only fixed points those with $\theta = 0$. These points are also the global minima of the energy $E(\calV, \theta) = \|\theta\|^2$, which are actually the only critical points. So, in this case, critical points, global minima and $\C$-VHS are synonymous.
 \end{example}
 \begin{example}
  When $X = \Sigma$ is a Riemann surface of genus $g \geq 2$, the energy functional has be intensively studied starting with \cite{Hi87}. In that case, it is a moment map for the circle action, hence the (smooth) critical points are exactly those induced by a $\C$-VHS. Remark that this implies the same for general smooth projective $X$ (with very ample metric): The energy can be expressed as $E(\calV, \theta) = \int_X \trace(\theta \wedge \theta^*) \wedge \Omega^{n-1}$, and the cohomology class determined by $\Omega^{n-1}$ can be taken as that of a smooth curve which is the complete intersection of $n-1$ hyperplane sections. Then Simpson (\cite{Si92}, \S 1) proves the functoriality with respect to pull-backs of the Higgs bundle associated to a representation and (loc. cit., \S 4) of the fixed points of the $\C^*$-action with respect to restrictions to a complete intersection, thus we can reduce to the case of curves.
 \end{example}

 Recall that when $\rho_0$ is induced by a $\C$-VHS, there is a harmonic map $f_0 \colon \tX \to G_0/K_0$ induced by the period mapping $\Phi_0 \colon \tX \to G_0/V_0$. Suppose that $f \colon \tX \to G/K$ is induced by this map (after some totally geodesic embedding $G_0/K_0 \subseteq G/K$). Denote by $\g_\C = \g \otimes \C$ the complexification of $\g$. Then, the vector bundle $\tilde X \times \g_\C$ has a Hodge decomposition of weight 0, $\tX \times \g_\C = \bigoplus_p [\g^{-p,p}]$. Denote by $\gamma$ the infinitesimal generator of the circle action, that is a section of the subbundle $[\v_0] \subseteq \tX \times \g$ (here $\v_0 \subseteq \k_0$ is the Lie algebra of $V_0$). It is determined by:
 \begin{equation}\label{eqn:defngamma}
 [\gamma, \xi] = \sum_p ip\xi^{-p,p}, \quad \text{where } \xi = \sum_p \xi^{-p,p}, \ \, \xi^{-p,p} \in [\g^{-p,p}].
 \end{equation}
 \begin{defn}
  A point $\rho \in \MB(X, G)$ is a \emph{critical point of the energy} if $\int_X \scal{\omega}{\beta} \de \Vol = 0$ for every $\omega \in \calH^1(X, \Ad(\rho_0))$.
 \end{defn}
 Remark that the choice of the harmonic metric is inconsistent. Also, if $\{\rho\}$ is a critical point, then the vanishing holds for every $\omega \in \calH^1(X, \Ad(\rho_0) \otimes \C)$, since $\g$ and $i\g$ are orthogonal, and more generally for every $\omega \in \calH^1(X, \tilde\g)$, where $\g \subset \tilde \g$ is a Lie subalgebra. Finally, note that if $\{\rho\}$ is a smooth point, then this definition coincides with the usual notion of critical point.

\begin{theorem}\label{thm:firstvariation}
  A point of the moduli space $\MB(X, G)$ is a critical point of the energy functional if, and only if, it is induced by a complex variation of Hodge structure. Using the Higgs bundle notation, this is actually equivalent to the unique vanishing: $\dev{E(\calV, t\theta)}{t} \big|_{t=1} = 0$.
 \end{theorem}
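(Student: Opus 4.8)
The plan is to reduce all three conditions to the single statement $\theta_{\textnormal{harm}}=0$, where $\theta_{\textnormal{harm}}$ is the harmonic part of the Higgs field (with respect to $\Delta'$, cfr.\ Example \ref{ex:kahler}). First I would compute the scaling derivative through Proposition \ref{prop:firstvariation}. Reparametrising so that $t=1$ becomes the origin, the family of representations attached to $(\calV,t\theta)$ has, in the Dolbeault picture, tangent direction $\dot\Phi=\theta$, $\dot A=0$; I would identify its de Rham harmonic representative $\omega$ with $\beta_{\textnormal{harm}}$, the harmonic part of $\beta=\theta+\theta^*$. Since a harmonic form is $L^2$-orthogonal to the exact and co-exact parts of $\beta$, Proposition \ref{prop:firstvariation} then gives
\[
 \dev{E(\calV,t\theta)}{t}\Big|_{t=1}=\int_X\bigscal{\beta_{\textnormal{harm}}}{\beta}\de\Vol=\big\|\beta_{\textnormal{harm}}\big\|^2=2\big\|\theta_{\textnormal{harm}}\big\|^2,
\]
so that this derivative vanishes exactly when $\theta_{\textnormal{harm}}=0$.

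Next I would treat criticality. Taking $\omega=\beta_{\textnormal{harm}}$ in the defining condition $\int_X\bigscal{\omega}{\beta}\de\Vol=0$ immediately yields $\big\|\beta_{\textnormal{harm}}\big\|^2=0$, so a critical point has $\theta_{\textnormal{harm}}=0$. Conversely, if $\theta_{\textnormal{harm}}=0$ then $\theta$ is $D''$-exact; by the generalised Kähler identities (Example \ref{ex:kahler}) every harmonic $\omega$ is also $D''$-harmonic, hence orthogonal to the image of $D''$, which makes $\int_X\bigscal{\omega}{\theta}\de\Vol=0$, and the analogous statement for $D'$ disposes of $\theta^*$. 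Thus being a critical point, the vanishing of the scaling derivative, and $\theta_{\textnormal{harm}}=0$ are all equivalent.

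It then remains to match $\theta_{\textnormal{harm}}=0$ with the variation of Hodge structure condition. If $\rho_0$ underlies a $\C$-VHS, the grading element $\gamma$ of \eqref{eqn:defngamma} is $\Dcan$-parallel and satisfies $[\theta,\gamma]=-i\theta$, so $\theta=D''(i\gamma)$ is exact and $\theta_{\textnormal{harm}}=0$. For the converse I would write $\theta=D''\xi$ with $\xi$ a global section of $\Ad(\rho_0)\otimes\C$; comparing bidegrees gives $\bar\partial\xi=0$ and $[\xi,\theta]=-\theta$, whence $\exp(s\xi)$ is a holomorphic automorphism with $\Ad_{\exp(s\xi)}\theta=e^{-s}\theta$, exhibiting isomorphisms $(\calV,\theta)\cong(\calV,e^{-s}\theta)$. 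Hence $(\calV,\theta)$ is a fixed point of the $\C^*$-action on $\MDol(X,G)$, and Simpson's classification of that fixed locus (\cite{Si92}) identifies it with a $\C$-VHS.

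The hard part will be the two identifications that the orthogonality arguments rest on: first, justifying that the de Rham harmonic representative of the scaling direction is precisely $\beta_{\textnormal{harm}}$, which requires carefully matching the Dolbeault and de Rham harmonic theories of the harmonic bundle; and second, the step from $\theta_{\textnormal{harm}}=0$ to a genuine variation of Hodge structure, where producing the holomorphic one-parameter group $\exp(s\xi)$ and appealing to Simpson's description of the $\C^*$-fixed points does the essential work.
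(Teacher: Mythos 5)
Your proposal is correct and its skeleton is the paper's: the first variation formula of Proposition \ref{prop:firstvariation}, Hodge-theoretic orthogonality via the generalized K\"ahler identities, and Simpson's identification of $\C^*$-fixed points with $\C$-VHS. The one real divergence is where you put the weight. You pivot the whole argument on the identity $\omega=\beta_{\textnormal{harm}}$ for the scaling direction, and you correctly flag this as the delicate step; the paper sidesteps it entirely. It observes that the Dolbeault tangent class of $(\calV,t\theta)$ at $t=1$ is $[\theta]\in H^1(\calA^\bullet(\calV),D'')$, takes the representative $\omega=\theta+D''\eta$ (harmonic by the K\"ahler identities), and plugs it straight into $\Re\int_X\scal{\omega}{\theta+\theta^*}_\C\,\de\Vol$: the cross terms die by type considerations, $\scal{\omega}{D''\eta}$ dies by harmonicity, and $\int\scal{\bar\partial\eta}{\theta^*}_\C$ dies by an integration by parts against $\partial\theta^*=0$, leaving $\|\omega\|_{L^2}^2$ with no need to recognize $\omega$ as a harmonic projection of $\beta$. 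Your identification is nevertheless true --- it follows from the dictionary \eqref{eqn:harmonictoHiggs} (which the paper only sets up in Section \ref{sec:secondvariation}) together with the fact that the space of harmonic forms is stable under adjunction, so that $\beta_{\textnormal{harm}}=\theta_{\textnormal{harm}}+(\theta_{\textnormal{harm}})^*$ --- but the paper's substitution is the cheaper way to discharge your ``hard part.'' Everything else matches up to notation: your $\theta=D''(i\gamma)$ is the paper's $\beta=D^c\gamma$ in different clothes, and your endgame $\theta=D''\xi\Rightarrow\Ad_{\exp(s\xi)}\theta=e^{-s}\theta\Rightarrow$ fixed point $\Rightarrow$ VHS is verbatim the paper's (note only that $\xi$ is a smooth global section of the adjoint \emph{bundle}, not a flat section of the local system $\Ad(\rho_0)\otimes\C$).
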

 \begin{proof}
  For the ``if'' part, let $\rho_0$ be induced by a $\C$-VHS, and give $\Ad(\rho_0)$ the metric induced by the period mapping. Any tangent direction to $\{\rho_0\}$ can be lifted to a $\rhotfirst$, hence giving rise to a harmonic 1-form $\omega$. Remark that since $\theta \in \calA^{1,0}([\g^{-1,1}])$ and $\theta^* \in \calA^{0,1}([\g^{1,-1}])$, \eqref{eqn:defngamma} implies
  \begin{equation}\label{eqn:betadec}
  \beta = \theta + \theta^* = i D'' \gamma - i D'\gamma = D^c \gamma.
  \end{equation}
  Thanks to Proposition \ref{prop:firstvariation}, we can then compute the variation of the energy along that direction:
  $$
  \dev{E}{t}\Big|_{t=0} = \int_X \bigscal{\omega}{D^c \gamma} \de \Vol = \int_X \bigscal{D^{c,*}\omega}{\gamma} \de \Vol = 0.
  $$
  For the ``only if'' part, consider the variation of the energy with respect to the $\C^*$-action. Actually, we will work with the action of $\R_{>0}$ only, since the energy is invariant under the circle action; so take the family of Higgs bundles $(\calV, t\theta)$ for $t \in (1-\varepsilon, 1+\varepsilon)$. The first order of this family corresponds to an element of the first cohomology group of the complex $(\calA^\bullet(\calV), D'')$; clearly, this element is represented by $\theta$ itself. Hence, the harmonic 1-form to which it corresponds (cfr. \cite{Si92}, Lemma 2.2) is $\omega = \theta + D''\eta$, for some $\eta \in \Cinfty(\calV)$. We obtain:
  $$
  0 = \dev{E(\calV, t\theta)}{t}\Big|_{t=0} = \Re \int_X \bigscal{\omega}{\theta+\theta^*}_\C \de \Vol = \Re \int_X \Big(\bigscal{\omega}{\theta}_\C + \bigscal{\bar\partial \eta}{\theta^*}_\C\Big) \de \Vol,
  $$
  where $\scal{\cdot}{\cdot}_\C$ denotes the Hermitian extension of $\scal{\cdot}{\cdot}$, so that the harmonic metric on $\tX \times \g_\C$ is its real part. Now $\int_X \scal{\omega}{\theta}_\C = \|\omega\|_{L^2}^2$, since the harmonicity of $\omega$ implies $\int \scal{\omega}{D''\eta} \de \Vol = 0$; also, $\int \scal{\bar\partial \eta}{\theta^*} = 0$, through an integration by parts, Stokes theorem and the identity $\partial \theta^* = 0$. Hence, $\omega = 0$. This implies $\theta = -D''\eta = -[\theta, \eta]$. Consider the 1-parameter group of automorphisms of $\calV$ defined by $g_t = \exp(t \eta)$; then $\Ad_{g_t}(\eta) = e^{-t}\theta$. Thus, $g_t \colon (\calV, \theta) \cong (\calV, e^t\theta)$, hence $\rho_0$ is induced by a $\C$-VHS. The last statement follows immediately.
 \end{proof}

 \section{Second order deformations}\label{sec:secondorder}
 
 \subsection{Equivariant and harmonic deformations}
 
 \begin{defn}
  A second order deformation of a map $f \colon \tM \to N$ is a pair of tangent fields along $f$ which we denote by
  \begin{equation} \label{eqn:secondorder}
   \Big( v \overset{\textnormal{not}}{=} \dev{f_t}{t}\Big|_{t=0},\ w \overset{\textnormal{not}}{=} \frac{D}{\partial t} \dev{f_t}{t}\Big|_{t=0}\Big) \in f^*\big(TN \times_N TN\big).
  \end{equation}
 \end{defn}
 Through the canonical connection, this description is equivalent to taking a section of the second jet bundle $J^2N$, which is a homogeneous space acted upon by the group $J^2G = \G(\R[t]/(t^3))$, that is in bijection with $G \times \g \times \g$ and whose group structure is, under this ``right trivialization $r$'', (cfr. \cite{Be08}, \S 23):
 \begin{equation}\label{eqn:productJ2G}
  (g, \xi, \mu) \cdot (h, \eta, \nu) = \Big(gh, \xi + \Ad_g (\eta), \mu + \Ad_g (\nu) + [\xi, \Ad_g(\eta)]\Big).
 \end{equation}
 \begin{defn}
  A second order deformation $\rhotsecond$ of $\rho_0$ (resp. of $\rhotfirst$) is a representation $\rhotsecond \colon \Gamma \to J^2G$ projecting to $\rho_0$ (resp. to $\rhotfirst$).
 \end{defn}
 Through the above trivialization, we can write $\rhotsecond = (\rho_0, c, k)$. Then one observes that $\rhotsecond$ is a deformation of $\rhotfirst = (\rho_0, c)$ if and only if $k \colon \Gamma \to \g$ makes the pair $(c, k)$ a 1-cocycle for the adjoint action of $\Gamma$ on $\g \otimes \R[t]/(t^2)$, that is, $(c + tk)(\gamma \eta) = c(\gamma) + t k(\gamma) + \gamma \cdot (c(\eta) + t k(\eta))$, where
 \begin{equation}\label{eqn:action2}
  \gamma \cdot (\xi + t \mu) = \Ad_{\rho_0(\gamma)}(\xi) + t \Big( \Ad_{\rho_0(\gamma)}(\mu) + \big[c(\gamma), \Ad_{\rho_0(\gamma)}(\xi)\big]\Big).
 \end{equation}
 \begin{lemma}\label{lemma:diagram}
  Keeping the same notations as in \ref{not:projection}, define a map $\piJN \colon N \times \g \times \g \to TN \times_N TN$ by
  $$
   \piJN(n, \xi, \mu) = \Big(v = \piTN(n, \xi), w = \piTN\big(n, \mu + [\xi^{[\k]}, \xi^{[\p]}]\big)\Big)
  $$
  Then the following diagram commutes:
  $$
  \xymatrix{
   J^2G \ar[r]^(.38)r\ar[dd]_{J^2\pi_{N}} & G \times \g \times \g \ar[d]^{/K}\\
   & N \times \g \times \g \ar[d]^{\piJN}\\
   J^2N \ar[r]^(.38){\sim} & TN \times_N TN
  }
  $$
 \end{lemma}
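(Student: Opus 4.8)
The plan is to chase the diagram by evaluating both composites on a general element of $J^2G$ realized as a $2$-jet of a smooth curve. I would represent a point of $J^2G$ over $g$ by a curve $g(t)$ with $g(0) = g$; in the right trivialization of \cite{Be08} the data $(g, \xi, \mu)$ is recovered as $\xi = \dot g(0) g^{-1}$ and $\mu = \dot\xi(0)$, where $\xi(t) = \dot g(t) g(t)^{-1}$ is the right logarithmic derivative (a short computation confirms that these conventions reproduce the product \eqref{eqn:productJ2G}). Then $J^2\pi_N$ sends this jet to the $2$-jet of $n(t) = g(t)K$, whose image in $TN \times_N TN$ is the pair $(v, w)$ with $v = \dot n(0)$ and $w = \nN_{d/dt} \dot n(t)\big|_{t=0}$, the velocity and the canonical (Levi-Civita) acceleration. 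It thus remains to show that this $(v,w)$ equals $\piJN(gK, \xi, \mu)$.

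The first-order identity is immediate: since $\dot g(t) = \xi(t) g(t)$, one has $v(t) = \dot n(t) = \piTN(n(t), \xi(t))$, so in particular $v = \piTN(gK, \xi)$, matching the first slot of $\piJN$. For the second-order part I would pass through $\beta_N$, using the intertwining $\beta_N \circ \nN = \ncan \circ \beta_N$ from Notation \ref{not:pullbackadjoint}: this reduces the computation of $w$ to that of $\beta_N(w) = \ncan_{d/dt} \beta_N(v(t))\big|_{t=0}$ inside the adjoint bundle $N \times \g$. On that bundle Example \ref{ex:adjoint} gives $\ncan = \de - \ad(\beta_N)$, and since $\beta_N(v(t)) = \xi(t)^{[\p]_{n(t)}}$ lies in $[\p]$ the correction term $[\beta_N(v),\beta_N(v)]$ vanishes, leaving $\beta_N(w) = \frac{d}{dt}\big(\xi(t)^{[\p]_{n(t)}}\big)\big|_{t=0}$.

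The only real difficulty is differentiating this last expression, because the fibre $[\p]_{n(t)} = \Ad_{g(t)}\p$ moves with $t$. The device I would use is the left logarithmic derivative $\zeta(t) = g(t)^{-1}\dot g(t) = \Ad_{g(t)^{-1}}\xi(t)$, which turns the moving decomposition into the fixed one: $\xi(t)^{[\p]_{n(t)}} = \Ad_{g(t)}\big(\zeta(t)^{\p}\big)$, where $\zeta(t)^\p$ is the projection onto the constant subspace $\p$. Differentiating and using the identity $\frac{d}{dt}\zeta(t)\big|_{t=0} = \Ad_{g^{-1}}\mu$ (which follows by expressing $\mu = \dot\xi(0)$ through $g$ and its derivatives) gives $\beta_N(w) = \mu^{[\p]} + [\xi, \xi^{[\p]}] = \mu^{[\p]} + [\xi^{[\k]}, \xi^{[\p]}]$. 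Because $[[\k], [\p]] \subseteq [\p]$ in a symmetric space, the bracket already lies in $[\p]$, so this coincides with $\beta_N\big(\piTN(gK, \mu + [\xi^{[\k]}, \xi^{[\p]}])\big)$; injectivity of $\beta_N$ then yields $w = \piTN(gK, \mu + [\xi^{[\k]}, \xi^{[\p]}])$, which is exactly the second slot of $\piJN$, establishing commutativity. By left $G$-equivariance of every map involved one may, if desired, first reduce to $g = e$, which removes the conjugations $\Ad_g$ but not the need to differentiate the moving projection, so the left-logarithmic-derivative step remains the crux.
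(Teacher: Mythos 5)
Your proof is correct, and its overall skeleton coincides with the paper's: realize the $2$-jet as a curve $g(t)$, identify the second slot $w$ with the Levi-Civita acceleration of $n(t)=g(t)K$, and transport the computation to the adjoint bundle via $\beta_N\circ\nN=\ncan\circ\beta_N$. Where you genuinely diverge is in the one delicate step, namely differentiating $\xi(t)^{[\p]_{n(t)}}$ with its $t$-dependent fibre. The paper covariantly differentiates the \emph{unprojected} curve $\xi(t)$, obtaining $\ncan_{\partial_t}\xi=\mu-[\xi^{[\p]},\xi]$ directly from $\Dcan=D-\ad(\beta)$, and only afterwards projects to $[\p]$; this is legitimate because $\Dcan$ commutes with the $[\k]\oplus[\p]$ splitting (point \eqref{item:Dcan} of Proposition \ref{prop:phls}), and it makes the moving-subspace issue disappear entirely. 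You project first and pay for it by having to differentiate the moving projection by hand, which you do correctly via the left logarithmic derivative $\zeta(t)=\Ad_{g(t)^{-1}}\xi(t)$ and the identity $\xi(t)^{[\p]_{n(t)}}=\Ad_{g(t)}\big(\zeta(t)^{\p}\big)$. Your route is more self-contained (it uses nothing beyond the definition of $\ncan$ and elementary $\Ad$-calculus), while the paper's is shorter because it leans on the already-established compatibility of the canonical connection with the Cartan decomposition. One small remark: the correction term $[\beta_N(v),\beta_N(v)]$ vanishes by antisymmetry of the bracket (it is a bracket of an element with itself), not because $\beta_N(v)$ lies in $[\p]$ --- in general $[[\p],[\p]]\subseteq[\k]$ is nonzero --- so your conclusion there is right but the stated reason is not the operative one.
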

 \begin{proof}
  Starting from a $n(t) = J^2\pi_N \circ r^{-1}(g, \xi, \mu)$, and recalling that the canonical connection is given by $\Dcan = D - [\beta, \cdot]$, its image in $TN\times_N TN$ is $(v, w)$ where by definition $\beta_N(v) = \xi^{[\p]}$ and $\beta_N(w)$ is the projection to $[\p]$ of $\mu - [\beta(\dev{}{t}), \xi] = \mu - [\beta_N(v), \xi]$. In turn, this equals $\mu - [\xi^{[\p]}, \xi^{[\k]}]$, which is the same as above.
 \end{proof}

 To define equivariant deformations (which are maps form $\tM$ to $TN\times_N TN$), either one follows the diagram of Lemma \ref{lemma:diagram} to identify actions, or one works out the formulas on the right hand side of the diagram only by considering a smooth family $f_t \colon \tM \to N$ of $\rho_t$-equivariant maps and then checks that they match trough $\piJN$; either way involves some computations. We limit ourselves to giving the resulting formulas; the details can be found in \cite{thesis}, \S 5.8.
 \begin{defn}\label{defn:wequivariant}
  A second order deformation $(v, w) \in \Cinfty(f^*(TN \times_N TN))$ is called equivariant if $v$ is, as a first order deformation, and $w$ satisfies:
  \begin{align*}
   w(\gamma \tx) &= \rho_0(\gamma)_* w(\tx)\\
                 &\phantom{=}\ + \piTN \Big({f(\gamma\tx)}, k(\gamma) + 2 \big[ c(\gamma)_{\gamma\tx}^{[\k]}, \Ad_{\rho_0(\gamma)}\beta_N(v(\tx))\big] + \big[c(\gamma)_{\gamma\tx}^{[\k]}, c(\gamma)_{\gamma\tx}^{[\p]}\big]\Big).
  \end{align*}
  A function $(F, F_2) \colon \tM \to \g \times \g$ is called $\rhotsecond$-equivariant if $F$ is $\rhotfirst$-equivariant and
  $$
  F_2(\gamma \tx) = \Ad_{\rho_0(\gamma)} F_2(\tx) + \big[c(\gamma), \Ad_{\rho_0(\gamma)} F(\tx)\big] + k(\gamma).
  $$
 \end{defn}

 \begin{defn}\label{defn:wharmonic}
  A second order deformation $(v, w)$ of $f$ is harmonic if $\calJ(v) = 0$ and, in terms of a local orthonormal frame $\{E_j\}$,
  $$
  \calJ(w) = 4 \sum_j R^N\big(\de f(E_j), v\big) \nN_{E_j} v.
  $$
  Fix $\omega$ as in Notation \ref{not:omega}; then we define two operators $D_2 \colon \calA_{\tM}^p (\g \times \g) \to \calA_{\tM}^{p+1}(\g \times \g)$ and $D_{2,*} \colon \calA_{\tM}^1 (\g \times \g) \to \calA_{\tM}^0(\g \times \g)$ and functions of harmonic type $(F, F_2)$ by:
  $$
   D_2 = \begin{pmatrix}\de&0\\ \ad(\omega)& \de\end{pmatrix}, \quad D_{2,*} = \begin{pmatrix}\de^*&0\\ \omega^*\cont& \de^*\end{pmatrix}, \quad D_{2,*} D_2 \begin{pmatrix}F\\F_2\end{pmatrix} = 0,
  $$
  where $\omega^* \cont$ denotes contraction by the adjoint $\omega^* = \omegap - \omegak$ of $\omega$: In terms of a local orthonormal frame $\{E_j\}$ and for $\tilde \alpha \in \calA_{\tM}^1(\g)$,
  $$
   \omega^* \cont \talpha = \sum_j \big[ \tomega(E_j)^{[\p]} - \tomega(E_j)^{[\k]}, \talpha(E_j) \big].
  $$
 \end{defn}
 \begin{remark}
  The operator $D_2$ actually defines a flat connection on $\tM \times \g \times \g$. The contraction $\omega^* \cont$ is defined so that for every $\calV$-valued $1$-form $\alpha$ and every section $\xi$ of $\calV$, we have $\scal{[\omega, \xi]}{\alpha} = \scal{\xi}{\omega^*\cont \alpha}$.
 \end{remark}

 The proof that if $f_t$ is a family of harmonic maps then defining $(v, w)$ as in \eqref{eqn:secondorder} gives a harmonic second order deformation follows the same lines as Lemma \ref{lemma:jacobi}, covariantly differentiating the expression found for $\covt \tau(f_t)$ and using the local symmetry condition $\nN(R^N) = 0$ whenever needed.
 \begin{lemma}\label{lemma:FF2vw}
  Let $(F, F_2) \colon \tM \to \g \times \g$ be $\rhotsecond$-equivariant and of harmonic type. Then defining $(v, w) = \piJN(F, F_2)$ gives a $\rhotsecond$-equivariant and harmonic second order deformation of $f$.
 \end{lemma}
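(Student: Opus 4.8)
The plan is to transport everything to the adjoint bundle $\calV = \tM\times_\Gamma\g$ through the isomorphism $\beta_N$, using the two relations recorded in Notation \ref{not:pullbackadjoint}, namely $\beta_N\circ\nN = \ncan\circ\beta_N$ and $\beta_N\circ\calJ = J\circ\beta_N$, together with the explicit shape of $\piJN$ from Lemma \ref{lemma:diagram}: if $(v,w) = \piJN(F,F_2)$ then $\beta_N(v) = \Fp$ and $\beta_N(w) = F_2^{[\p]} + [\Fk,\Fp]$. First I would dispose of the two easy assertions. Equivariance of $(v,w)$ is formal: the diagram of Lemma \ref{lemma:diagram} exhibits $\piJN$ as a $J^2G$-equivariant map (it is assembled from the equivariant $J^2\pi_N$ and the standard trivializations of \eqref{eqn:productJ2G}), so the $\rhotsecond$-equivariance of $(F,F_2)$ demanded in Definition \ref{defn:wequivariant} passes through $\piJN$ to yield the $\rhotsecond$-equivariance of $(v,w)$; alternatively one substitutes the equivariance formulas for $F(\gamma\tx)$ and $F_2(\gamma\tx)$ into $\beta_N(w)$ and checks the cocycle identity by hand. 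Harmonicity of $v$ is precisely the first-order computation: $\beta_N(\calJ(v)) = J(\beta_N(v)) = J(\Fp) = J(F)^{[\p]} = 0$, using that $J$ preserves the $[\k]/[\p]$-decomposition and that $J(F)=0$ is the first row of the harmonic-type condition $D_{2,*}D_2\binom{F}{F_2}=0$.

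The substance of the lemma is the second equation $\calJ(w) = 4\sum_j R^N(\de f(E_j),v)\nN_{E_j}v$ of Definition \ref{defn:wharmonic}. Applying $\beta_N$ and invoking $\beta_N\circ\calJ = J\circ\beta_N$ turns it into an identity in $\calV$: I must show that $J\big(F_2^{[\p]} + [\Fk,\Fp]\big)$ equals $\beta_N$ of the curvature term. On the left I would expand $J(\beta_N(w)) = J(F_2)^{[\p]} + J([\Fk,\Fp])$ and substitute the second row of the harmonic-type condition, which (after using $\de F = \omega$, forced exactly as in the remark following Theorem \ref{thm:firstorder}) reads $J(F_2) = -\,\omega^*\cont\omega - \de^*[\omega,F]$; here the definition of $\omega^*\cont$ gives $\omega^*\cont\omega = 2\sum_j[\omega(E_j)^{[\p]},\omega(E_j)^{[\k]}]$. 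On the right I would rewrite the Riemannian curvature of the symmetric space through $\beta_N$ by the standard identity $R^N(X,Y)Z \leftrightarrow -[[\beta_N X,\beta_N Y],\beta_N Z]$, turning the curvature term into $-4\sum_j[[\tbeta(E_j),\Fp],\ncan_{E_j}\Fp]$. Matching the two sides is then a bracket computation: one expands $J([\Fk,\Fp])$ using that $\ncan$ is a derivation of the bracket and that $J = \Delta^{\can} + \sum_j\ad(\tbeta(E_j))^2$, decomposes $\omega(E_j)$ into its $[\p]$- and $[\k]$-parts via $\omega^{[\p]} = \dcan\Fp + [\tbeta,\Fk]$ and $\omega^{[\k]} = \dcan\Fk + [\tbeta,\Fp]$ (the two halves of $\de F = \omega$), and repeatedly uses the Jacobi identity and the Maurer--Cartan equation \eqref{eqn:maurercartan}.

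The hard part is precisely this last matching, and the obstruction is entirely \emph{bookkeeping}: the curvature term on the right is nonlinear and, once translated, must be reproduced by the interplay between the correction term $[\Fk,\Fp]$ in $\beta_N(w)$, the contraction $\omega^*\cont\omega$, and the codifferential $\de^*[\omega,F]$, with every bracket and every numerical factor (in particular the coefficient $4$ and the factor $2$ from $\omega^*\cont\omega$) tracked carefully through the $[\p]$-projection. I expect the cleanest organization is to prove, once and for all, the intertwining statement that for $\rhotfirst$-equivariant $F$ with $\de F = \omega$ the $[\p]$-component of the second row of $D_{2,*}D_2\binom{F}{F_2}$ equals $\beta_N\big(\calJ(w) - 4\sum_j R^N(\de f(E_j),v)\nN_{E_j}v\big)$; the lemma then follows immediately from $D_{2,*}D_2\binom{F}{F_2}=0$, in perfect analogy with how $\beta_N\circ\calJ = J\circ\beta_N$ disposes of the first-order case.
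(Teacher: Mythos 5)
Your proposal is correct and follows essentially the same route as the paper's proof: equivariance is dismissed as a formal/tedious check, harmonicity of $v$ is the first-order identity $\beta_N(\calJ(v)) = J(\Fp) = 0$, and for $w$ one applies $\beta_N$, notes that $\de F = \omega$ is forced, reads off $J(F_2)$ from the second row of $D_{2,*}D_2\binom{F}{F_2}=0$, expands $J([\Fk,\Fp])$ via $\omega = \Dcan F + [\tbeta,F]$, and matches the sum against the curvature term. The bracket bookkeeping you defer is precisely what the paper's two displayed intermediate formulas for $J(F_2)^{[\p]}$ and $J([\Fk,\Fp])$ carry out, so the only remaining care needed is tracking the coefficients and the paper's sign convention for $R^N$ through that expansion.
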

 \begin{proof}
  The proof that equivariance conditions match is tedious but straightforward (it is more agile to prove that $\beta_N(w)$ has the same kind of equivariance as $F_2^{[\p]} + [\Fk, \Fp]$; for details, we refer to \cite{thesis}, \S 5.2). To prove that $(v, w)$ is harmonic, first observe that our hypotheses force $\de F = \omega$. Then, harmonic type condition gives the following expression for $J(F_2)$:
  \begin{equation}\label{eqn:JF2}
  J(F_2) = \sum_j \big[ \omega(E_j), \nt_{E_j} F\big] - \omega^* \cont \omega
  \end{equation}
  (recall that $\tilde D = \Dcan - \ad(\beta)$). Writing $\omega = \Dcan F + [\beta, F]$, and substituting everywhere in \eqref{eqn:JF2}, one gets
  \begin{align*}
   J(F_2^{[\p]}) = J(F_2)^{[\p]} &= \sum_j 4\big[[\beta(E_j), F^{[\p]}], \nW_{E_j}F^{[\p]}\big] + 2 \big[\nW_{E_j}F^{[\k]}, \nW_{E_j}F^{[\p]}\big]\\
                 &\phantom{=\sum}\ - 2 \Big[ \big[\beta(E_j), F^{[\k]}\big], \big[\beta(E_j), F^{[\p]}\big]\Big].
  \end{align*}
  Recalling that $J(\xi) = -\trace(\tilde D \de \xi)$ and that $J(F) = 0$, we obtain:
  $$
   J\big([\Fk, \Fp]\big) = \sum_j  2 \Big[ \big[\beta(E_j), F^{[\k]}\big], \big[\beta(E_j), F^{[\p]}\big]\Big]- 2 \big[\nW_{E_j}F^{[\k]}, \nW_{E_j}F^{[\p]}\big].
  $$
  Adding the two expressions together one obtains exactly
  $$
  J(\beta_N(w)) = 4 \sum_j \big[ [\beta(E_j), \Fp], \ncan_{E_j}\Fp\big].
  $$
 \end{proof}

 \subsection{Construction of $F_2$}
 
 From now on, we are given a second order deformation $\rhotsecond$ of $\rho_0$ and a harmonic and $\rho_0$-equivariant $f \colon \tM \to N$, and we try to construct a second order deformation of $f$. By lemma \ref{lemma:FF2vw}, it is enough to construct a $(F, F_2)$ both $\rhotsecond$-equivariant and of harmonic type. Remark that the centralizer $H$ (cfr. Notation \ref{not:G0}) acts on $H^1(M, \Ad(\rho_0))$ by conjugation. This action preserves the subspace of harmonic 1-forms: Indeed, if $\omega$ is harmonic and $h \in H \cap K$ it is easy to see that $\Ad_h(\omega)$ is still harmonic (after reducing from $f$ to $f_0$, $h \in K$ becomes unitary). To conclude, since $H = H^\circ \cdot (H \cap K)$, we need only to prove that the adjoint action of $\h$ preserves harmonic forms, i.e. that for every $\xi \in \h$ the 1-form $[\omega, \xi]$ is still harmonic. This follows as in the proof of point \eqref{item:globalsections} of Proposition \ref{prop:phls}, since $\xi$ is both $\Dcan$-closed and satisfies $[\beta, \xi] = 0$.
 
 Denote by $\Ad(\rhotfirst)$ the local system given by the adjoint action of $\Gamma$ on $\g \otimes \R[t]/(t^2)$, as in \eqref{eqn:action2}. Then we have an exact sequence of sheaves:
 \begin{equation}\label{eqn:exactsequence}
  0 \to \Ad(\rho_0) \textrightarrow{\times t} \Ad(\rhotfirst) \textrightarrow{\textnormal{mod } t} \Ad(\rho_0) \to 0.
 \end{equation}
 \begin{lemma}\label{lemma:hp}
  In the long exact sequence associated to \eqref{eqn:exactsequence}, the image of the map $H^0(M, \Ad(\rhotfirst)) \to H^0(M, \Ad(\rho_0)) = \h$ is the subspace $\h'\subseteq \h$ made of those $\xi \in \h$ such that $[\omega, \xi] = 0$.
 \end{lemma}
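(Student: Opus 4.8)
The plan is to extract the image directly from exactness of the long cohomology sequence attached to \eqref{eqn:exactsequence}, and to make the connecting homomorphism explicit. Passing to group cohomology of $\Gamma$ (which agrees with sheaf cohomology in the degrees $0$ and $1$ through the isomorphism $H^\bullet(\Gamma,\g)\cong H^\bullet(M,\Ad(\rho_0))$ already in use), the sequence reads
$$0 \to \h \textrightarrow{\times t} H^0(M,\Ad(\rhotfirst)) \textrightarrow{\textnormal{mod }t} \h \textrightarrow{\delta} H^1(M,\Ad(\rho_0)),$$
where $H^0(M,\Ad(\rho_0))=\h$ as in Notation \ref{not:omega}. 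By exactness, the image of the map $H^0(M,\Ad(\rhotfirst))\to\h$ is precisely $\Ker(\delta)$, so it suffices to compute $\delta$.

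First I would apply the standard recipe for the connecting map. Given $\xi\in\h$, a lift to the middle term of \eqref{eqn:exactsequence} is $\xi + t\cdot 0$, and its failure to be $\Gamma$-invariant is governed by the action \eqref{eqn:action2}: since $\Ad_{\rho_0(\gamma)}\xi=\xi$ for $\xi\in\h$, one finds $\gamma\cdot\xi-\xi = t\,[c(\gamma),\xi]$. Dividing by $t$, this exhibits $\delta(\xi)$ as the class of the assignment $\gamma\mapsto[c(\gamma),\xi]$; a short check using the cocycle relation for $c$ together with $\Ad_{\rho_0(\gamma)}\xi=\xi$ confirms that this is indeed a $1$-cocycle.

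Next I would transport $\delta(\xi)$ to the de Rham picture. Because $\xi$ is a flat global section, $\ad(\xi)$ is a morphism of the local system $\Ad(\rho_0)$, hence acts compatibly on $H^1(\Gamma,\g)$ and on $H^1(M,\Ad(\rho_0))$; applying it to $\{c\}$, which corresponds to $\{\omega\}$ by Notation \ref{not:omega}, identifies $\delta(\xi)$ with the de Rham class $\{[\omega,\xi]\}$. Thus $\xi\in\Ker(\delta)$ if and only if $[\omega,\xi]$ is cohomologically trivial. To upgrade this to the pointwise vanishing defining $\h'$, I would invoke the observation established just before the lemma: for $\xi\in\h$ the form $[\omega,\xi]$ is again harmonic (as $\xi$ is $\Dcan$-closed and satisfies $[\beta,\xi]=0$). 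A harmonic form representing the zero class is itself zero, so $\{[\omega,\xi]\}=0$ forces $[\omega,\xi]=0$, and therefore $\Ker(\delta)=\{\xi\in\h:[\omega,\xi]=0\}=\h'$.

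The step I expect to be most delicate is this middle identification: matching the algebraic connecting map $\xi\mapsto\{[c,\xi]\}$ with bracketing by the harmonic representative $\omega$, i.e.\ verifying that the isomorphism $H^1(\Gamma,\g)\cong H^1(M,\Ad(\rho_0))$ intertwines $\ad(\xi)$ on the two sides. Once that is in hand, the harmonicity of $[\omega,\xi]$ does the remaining work for free, converting the cohomological condition into the pointwise one that defines $\h'$.
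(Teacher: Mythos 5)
Your proof is correct and takes essentially the same route as the paper: the paper simply writes out directly the condition for $\xi + t\mu$ to be a $\Gamma$-invariant section of $\Ad(\rhotfirst)$, which is exactly your computation that the connecting homomorphism sends $\xi$ to the class of $\gamma\mapsto[c(\gamma),\xi]$, and both arguments then use the harmonicity of $[\omega,\xi]$ (noted just before the lemma) to pass from the cohomological vanishing $\{[c,\xi]\}=0$ to the pointwise condition $[\omega,\xi]=0$ defining $\h'$.
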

 \begin{proof}
  The condition for $\xi + t \mu$ to be a global section of $\Ad(\rhotfirst)$ is that $\xi \in \h$ and $\Ad_{\rho_0(\gamma)}\mu = \mu - [c(\gamma), \xi]$. This last condition can be rewritten as $[c, \xi] = \delta(\mu)$, where $\delta$ denotes the coboundary in group cohomology. This means exactly $\xi \in \h'$.
 \end{proof}

 Suppose we already have $(F, F_2)$ that is both $\rhotsecond$-equivariant and of harmonic type. Then we can define a 1-form $\psi = \psi(F, F_2) \in \calA_M^1(\calV)$ by
 $$
  \begin{pmatrix}\omega\\\psi\end{pmatrix} = D_2 \begin{pmatrix}F\\F_2\end{pmatrix} = \begin{pmatrix} \de F\\ \de F_2 + [\omega, F]\end{pmatrix}.
 $$
 By flatness of $D_2$ and harmonic type condition we obtain equations for $\psi$:
 \begin{equation}\label{eqn:depsi}
  \begin{split}
   \de \psi &= - [\omega, \omega];\\
   \de^* \psi &= -\omega^* \cont \omega = 2 \cdot \sum_j \big[\omega(E_j)^{[\k]}, \omega(E_j)^{[\p]}\big] \in \Cinfty(M, [\p]).
 \end{split}
 \end{equation}
 Thus the existence of a solution to \eqref{eqn:depsi} is a necessary condition for the existence of $(F, F_2)$ as above. We shall prove that it is also sufficient (cfr. Proposition \ref{prop:obstructionF2}). First of all, we investigate on uniqueness:
 \begin{lemma}\label{lemma:uniquenessFF2}
  Let $(F, F_2)$ be $\rhotsecond$-equivariant and of harmonic type. Then every other $(F', F_2')$ both $\rhotsecond$-equivariant and of harmonic type writes as:
  \begin{equation}\label{eqn:FpF2p}
   (F', F_2') = \big(F+\xi, F_2 + [F, \xi] + \eta\big),
  \end{equation}
  where $\xi$, $\eta$ are in $\h$. Conversely, every such expression gives a $\rhotsecond$-equivariant function of harmonic type. In particular, the 1-form $\psi = \psi(F, F_2)$ is unique if and only if $\h = \h'$.
 \end{lemma}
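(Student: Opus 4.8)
The plan is to exploit the linearity of the conditions ``$\rhotsecond$-equivariant and of harmonic type'' by passing to differences. Writing $\xi = F' - F$ and $\zeta = F_2' - F_2$, the inhomogeneous terms $c(\gamma)$ and $k(\gamma)$ cancel while the action \eqref{eqn:action2} is respected, so $(\xi,\zeta)$ is an $\Ad(\rhotfirst)$-equivariant function, and it lies in the kernel of $D_{2,*}D_2$. The first component of that system reads $\de^*\de\xi = 0$, i.e.\ $J(\xi)=0$; since $\xi$ is genuinely $\Ad(\rho_0)$-equivariant, hence a section of $\calV$, the integration-by-parts argument already used in the proof of Theorem \ref{thm:firstorder} gives $\de\xi = 0$, so $\xi \in \h$.

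With $\xi\in\h$ in hand I would next isolate the relevant piece of $\zeta$. A short equivariance computation, using that $F$ is $\rhotfirst$-equivariant and $\xi$ is $\Ad(\rho_0)$-equivariant, shows that $[F,\xi]$ carries exactly the same inhomogeneous term $[c(\gamma),\Ad_{\rho_0(\gamma)}\xi]$ as $\zeta$; hence $\eta := \zeta - [F,\xi]$ is $\Ad(\rho_0)$-equivariant, which is what motivates the shape of \eqref{eqn:FpF2p}. It remains to prove $\eta\in\h$. Substituting $\zeta = [F,\xi]+\eta$ into the second component of $D_{2,*}D_2(\xi,\zeta) = 0$ and using $\de F = \omega$ together with $\de\xi = 0$ (so that $\de[F,\xi] = [\omega,\xi]$), the equation collapses to $\de^*\de\eta + 2\,\de^*[\omega,\xi] = 0$. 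Here is the one non-formal input: for $\xi\in\h$ the $1$-form $[\omega,\xi]$ is again harmonic, as established just before the exact sequence \eqref{eqn:exactsequence}, so $\de^*[\omega,\xi]=0$ and the cross term drops. We are left with $J(\eta) = \de^*\de\eta = 0$, and a second application of the same integration by parts gives $\de\eta = 0$, hence $\eta\in\h$.

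For the converse I would simply substitute $(F+\xi,\, F_2+[F,\xi]+\eta)$ with $\xi,\eta\in\h$ back into the definitions. Equivariance follows by reversing the computation above, the term $[c(\gamma),\Ad_{\rho_0(\gamma)}\xi]$ produced by $[F,\xi]$ being precisely the correction that upgrades the $\rhotfirst$-equivariance of $F'$ to the $\rhotsecond$-equivariance of $F_2'$; the harmonic-type equations hold because $J$ is linear with $J(\xi)=J(\eta)=0$, $\de\xi=\de\eta=0$, and again $\de^*[\omega,\xi]=0$. Finally, for the statement about $\psi$, I would compute directly from $\psi = [\omega,F]+\de F_2$ that $\psi(F',F_2') - \psi(F,F_2) = 2[\omega,\xi]$, the $\eta$-contribution vanishing since $\de\eta=0$. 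Thus $\psi$ is independent of the choice exactly when $[\omega,\xi]=0$ for all $\xi\in\h$, which by Lemma \ref{lemma:hp} is the condition $\h=\h'$.

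The main obstacle, such as it is, is the harmonicity of $[\omega,\xi]$ for $\xi\in\h$: without it the cross term $\de^*[\omega,\xi]$ would survive and one could not conclude $J(\eta)=0$. Everything else is linear bookkeeping together with two uses of the integration-by-parts argument from Theorem \ref{thm:firstorder}.
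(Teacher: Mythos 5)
Your proof is correct and follows essentially the same route as the paper's: the paper reduces both components to Theorem \ref{thm:firstorder} (whose affine structure over $\h$ is exactly your integration-by-parts step $J = \de^*\de \Rightarrow \de\xi = \de\eta = 0$), and both arguments hinge on the same key input, namely the harmonicity of $[\omega,\xi]$ for $\xi \in \h$ established before \eqref{eqn:exactsequence}, together with the identity $\psi(F',F_2') = \psi(F,F_2) + 2[\omega,\xi]$. You merely unwind the operator equations $D_{2,*}D_2 = 0$ explicitly where the paper cites the first-order theorem, so the substance is identical.
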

 \begin{proof}
  One checks readily that $(F', F_2')$ defined as in \eqref{eqn:FpF2p} is both equivariant and of harmonic type (for the latter, one finds that $\psi(F', F_2') = \psi(F, F_2) + 2 [\omega, \xi]$ and uses that $[\omega, \xi]$ is harmonic). Theorem \ref{thm:firstorder} states that necessarily $F' = F + \xi$ for some $\xi \in \h$. One then reduces to $F = F'$, in which case $F_2 - F_2' \colon \tM \to \g$ becomes a $(\rho_0, k)$-equivariant map of harmonic type, and one applies again the same theorem. Then, the condition for uniqueness of $\psi$ is that for every $\xi \in \h$, $[\omega, \xi] = 0$, i.e., $\h = \h'$.
 \end{proof}

 Now we investigate the existence of a solution to \eqref{eqn:depsi}. It is well known (cfr. \cite{GoMi88}, \S 4.4) that the condition for $[\omega, \omega]$ to be null in cohomology is implied by the representation $\rhotfirst$ extending to the second order to some $\rhotsecond$. Thus, under our hypotheses, we can always find at least a solution to the first equation of \eqref{eqn:depsi}. Also remark that by the Hodge theorem on Riemannian manifolds the self-adjoint operator $J$ determines an orthogonal splitting
 $$
 \Cinfty(\calV) = \h \oplus \Image(J),
 $$
 since $\h = \ker(\de) = \ker(\de^*\de)$. Furthermore, this splitting is compatible with projections to $[\p]$ and $[\k]$, by point \eqref{item:globalsections} of Proposition \ref{prop:phls}. We can now prove the main result about the existence of $(F, F_2)$:
 
 \begin{prop}\label{prop:obstructionF2}
  The following are equivalent:
  \begin{enumerate}
   \item\label{item:soldepsi} The system of equations \eqref{eqn:depsi} admits a solution;
   \item\label{item:orthogonality} The section $\omega^*\cont \omega \in \Cinfty(\calV)$ is orthogonal to $\h$;
   \item\label{item:criticalpoint} The harmonic 1-form $\omega$ is a critical point for the $L^2$-norm in its $H$-orbit;
   \item\label{item:FF2exists} There is a pair $(F, F_2)$ which is both $\rhotsecond$-equivariant and of harmonic type;
   \item\label{item:everyFextends} Every $F \colon \tM \to \g$ both $\rhotfirst$-equivariant and of harmonic type extends to a $(F, F_2)$ as in point \eqref{item:FF2exists}.
  \end{enumerate}
 \end{prop}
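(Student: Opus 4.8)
The plan is to run the equivalences in two linked cycles: $(1)\Leftrightarrow(2)\Leftrightarrow(3)$ by Hodge theory on the twisted de Rham complex, and $(4)\Rightarrow(1)\Rightarrow(4)\Rightarrow(5)\Rightarrow(4)$, with $(1)\Rightarrow(4)$ carrying the real weight. For $(1)\Leftrightarrow(2)$ I would exploit the Hodge decomposition $\psi = \de a + \de^* b + h$ (with $h$ harmonic) for $\calV$-valued $1$-forms. Since $\de^2 = 0 = (\de^*)^2$, the two equations of \eqref{eqn:depsi} decouple into $\de\de^* b = -[\omega,\omega]$ and $\de^*\de a = Ja = -\omega^*\cont\omega$, with $h$ unconstrained. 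The first is solvable because $[\omega,\omega]$ is closed (as $\omega$ is) and exact (as $\{[\omega,\omega]\} = 0$, granted by the existence of $\rhotsecond$), so it imposes no condition; the second is solvable exactly when $-\omega^*\cont\omega \in \Image(J)$, which by the orthogonal splitting $\Cinfty(\calV) = \h \oplus \Image(J)$ means $\omega^*\cont\omega \perp \h$, i.e. $(2)$.

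For $(2)\Leftrightarrow(3)$, the tangent space to the $H$-orbit of $\omega$ at $\omega$ is $\{[\omega,\xi] : \xi \in \h\}$, and the derivative of the $L^2$-norm along $\xi \in \h$ is $2\scal{[\omega,\xi]}{\omega}_{L^2}$. By the defining property of the contraction, $\scal{[\omega,\xi]}{\alpha} = \scal{\xi}{\omega^*\cont\alpha}$ pointwise (since $\ad$ of a $[\k]$-element is anti-selfadjoint and of a $[\p]$-element is selfadjoint), so this derivative equals $2\scal{\xi}{\omega^*\cont\omega}_{L^2}$; it vanishes for all $\xi \in \h$ precisely when $\omega^*\cont\omega \perp \h$, which is again $(2)$.

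The core is $(1)\Leftrightarrow(4)$. The direction $(4)\Rightarrow(1)$ is the computation already recorded: from $(F,F_2)$ one sets $\psi = \de F_2 + [\omega,F]$ and reads off \eqref{eqn:depsi} from flatness of $D_2$ and the harmonic-type equation $D_{2,*}D_2\binom{F}{F_2} = 0$. For $(1)\Rightarrow(4)$ I would first produce $F$ from Theorem \ref{thm:firstorder} (so $\de F = \omega$, $F$ is $\rhotfirst$-equivariant, $JF = 0$), then build $F_2$ by packaging $\Phi = F + tF_2 \colon \tM \to \g \otimes \R[t]/(t^2)$ and observing that $\rhotsecond$-equivariance of $(F,F_2)$ is exactly equivariance of $\Phi$ for the local system $\Ad(\rhotfirst)$ of \eqref{eqn:action2} with cocycle $c + tk$, while $D_2$ is the associated flat connection. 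Thus I need a $D_2$-closed form $\omega + t\psi$ whose $\Ad(\rhotfirst)$-class is $\{c+tk\}$, after which the $\R[t]/(t^2)$-analogue of Lemma \ref{lemma:existence} furnishes an equivariant primitive $\Phi$. The given solution of \eqref{eqn:depsi} makes $\omega + t\psi$ closed (first equation) and forces $D_{2,*}(\omega + t\psi) = 0$, i.e. harmonic type (second equation); the only point to fix is that $\{\omega + t\psi\}$ and $\{c+tk\}$ both lift $\{\omega\} = \{c\}$, so their difference lies in the image of $\times t$ in \eqref{eqn:exactsequence} and equals $t\{\chi\}$ for a harmonic $\chi \in \calH^1(M,\Ad(\rho_0))$. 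Replacing $\psi$ by $\psi + \chi$ (still a solution, since $\chi$ is closed and co-closed) matches the classes, and Lemma \ref{lemma:existence} then yields $\Phi$, hence $(F,F_2)$.

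Finally $(4)\Leftrightarrow(5)$: the implication $(5)\Rightarrow(4)$ is immediate because harmonic-type $\rhotfirst$-equivariant functions $F$ exist by Theorem \ref{thm:firstorder}; conversely, given one solution $(F_0,F_{2,0})$, any harmonic-type $\rhotfirst$-equivariant $F$ differs from $F_0$ by some $\xi \in \h$ (the affine structure of Theorem \ref{thm:firstorder}), and Lemma \ref{lemma:uniquenessFF2} shows that $(F_0 + \xi,\, F_{2,0} + [F_0,\xi])$ is again equivariant and of harmonic type, so it extends $F$. I expect the genuine obstacle to be $(1)\Rightarrow(4)$: setting up the correct $\R[t]/(t^2)$-equivariant version of Lemma \ref{lemma:existence} for $\Ad(\rhotfirst)$, checking that $D_2$ is its flat connection, and performing the cohomology-class matching so that the resulting $F_2$ carries exactly the equivariance prescribed by $k$.
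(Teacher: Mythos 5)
Your proposal is correct and follows essentially the same route as the paper: the same integration-by-parts/moment-map arguments give \eqref{item:soldepsi}$\Rightarrow$\eqref{item:orthogonality}$\Leftrightarrow$\eqref{item:criticalpoint}, Lemma \ref{lemma:uniquenessFF2} gives \eqref{item:FF2exists}$\Leftrightarrow$\eqref{item:everyFextends}, and the main implication is obtained by feeding a closed $\Ad(\rhotfirst)$-valued $1$-form representing $c+tk$ into Lemma \ref{lemma:existence} (which, being stated for an arbitrary $\tau\colon\Gamma\to\GL(V)$, needs no separate $\R[t]/(t^2)$-analogue). The one difference is the order of corrections, and it is a mirror image of the paper's: the paper starts from an arbitrary representative $\omega+t\omega_2^0$ of $\{c+tk\}$ and adds $t\,\de\eta$ with $J(\eta)=-\omega^*\cont\omega-\de^*\psi^0$ to enforce the second equation of \eqref{eqn:depsi} (this is where hypothesis \eqref{item:orthogonality} enters, via $\h^\perp=\Image(J)$), whereas you start from a solution $\psi$ of \eqref{eqn:depsi} and add a harmonic $t\chi$ to match the class in $H^1(M,\Ad(\rhotfirst))$; both corrections preserve the other property, so both work. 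The only point in your write-up that needs care --- and you flag it yourself --- is that a solution $\psi\in\calA^1_M(\calV)$ of \eqref{eqn:depsi} is $\Ad(\rho_0)$-equivariant, so $\omega+t\psi$ is \emph{not} itself an $\Ad(\rhotfirst)$-equivariant form (the second component must pick up the extra term $[c(\gamma),\cdot\,]$); the object to which Lemma \ref{lemma:existence} applies is $\omega+t(\psi+[F,\omega])$, which is precisely the conversion $\psi^0=\omega_2^0-[F^0,\omega]$ built into the paper's definition of $\psi(F,F_2)$ via $D_2$. Once that substitution is made explicit, your argument closes.
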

 \begin{proof}
  Let $\xi$ be in $\h$. Then, if $\psi$ is a solution to \eqref{eqn:depsi}, $\scal{\omega^*\cont \omega}{\xi} = - \scal{\psi}{\de \xi} = 0$, hence \eqref{item:soldepsi}$\implies$\eqref{item:orthogonality}. Furthermore, since $\scal{\omega^* \cont \omega}{\xi} = -\frac{1}{2} \dev{}{t} \|\Ad_{\exp(t \xi)}(\omega) \|\big|_{t=0}$, we have \eqref{item:orthogonality}$\iff$\eqref{item:criticalpoint}. The implication \eqref{item:FF2exists}$\implies$\eqref{item:soldepsi} is the definition, and \eqref{item:FF2exists}$\iff$\eqref{item:everyFextends} is Lemma \ref{lemma:uniquenessFF2}. We are left with proving that \eqref{item:orthogonality} implies \eqref{item:FF2exists}. Start from an $F^0 \colon \tM \to \g$ that is both $\rhotfirst$-equivariant and of harmonic type, so that $\de F^0 = \omega$. There exists a $\g$-valued 1-form $\omega_2^0$ such that $\omega + t \omega_2^0$ is closed, $\Ad(\rhotfirst)$-valued and it represents $c + t k$ (i.e. the classes represented in $H^1(M, \Ad(\rhotfirst))$ by $\omega+t\omega_2^0$ and by $c+tk$ coincide). Define two $\calV$-valued 1-forms $\psi^0$, $\psi$ by:
  $$
  \psi^0 = \omega_2^0 - [F^0, \omega], \quad \psi = \psi^0 + \de \eta \ \textnormal{ where }\  J(\eta) = -\omega^*\cont \omega - \de^*\psi^0
  $$
  (such an $\eta$ exists because $-\omega^*\cont\omega \in \h^\perp = \Image(J)$ by hypothesis). Then $\psi$ satisfies $\eqref{eqn:depsi}$, and letting $\omega_2 = \omega_2^0 + \de \eta$, again $\omega + t \omega_2$ is an $\Ad(\rhotfirst)$-equivariant, closed 1-form that represents $c + t k$. We apply lemma \ref{lemma:existence} with this 1-form as $\phi$ and $\tau = \rhotfirst$, to construct a $F + t F_2 \colon \tM \to \g \otimes \R[t]/(t^2)$. This pair $(F, F_2)$ is then $\rhotsecond$-equivariant and of harmonic type.
 \end{proof}
 Remark that by the usual theory of moment maps (cfr. \cite{Ki84}, Part 1), when $G$ is a complex group, defining the moment map $\mu(\omega)(\xi) = -\frac{i}{2} \int_M \scal{[\xi, \omega]}{\omega}\de \Vol$ for $\xi \in \hk$, we find that point \eqref{item:criticalpoint} can be strengthened to ``$\omega$ is a minimum of the $L^2$-norm''. Such a minimum exists if and only if $\omega$ is a polystable point of the action.

 \subsection{Existence of $w$}
 We have discussed the existence of an equivariant pair $(F, F_2)$ of harmonic type, since its existence would imply the existence of a harmonic and equivariant second order deformation $w$. Now we investigate the existence of $w$ directly. For the sake of brevity, we introduce the following terminology (the reason of which will become clear in Proposition \ref{prop:complexobstructions}).

 \begin{defn}\label{defn:deformability}
  A map $f \colon \tM \to N$ is \emph{deformable} along $\rhotfirst = (\rho_0, c)$ if there exists second order deformations $\rhotsecond$ of $\rhotfirst$ and $(v, w)$ of $f$, the latter being $\rhotsecond$-equivariant and harmonic. It is called \emph{$\C$-deformable} if there exists a $\rhotsecond$ as above and a $\rhotsecond$-equivariant $(F, F_2)$ of harmonic type.
 \end{defn}
 We claim that the existence of these objects only depends on $f$ and $\rhotfirst$ only (i.e. not on the chosen $\rhotsecond$ nor on the first order deformation of $f$). That the existence of $F_2$ is independent on the $F$ chosen, is lemma \ref{lemma:uniquenessFF2}. The fact that the existence of $w$ depends on $\rhotfirst$ only has a similar proof: If $\rhotsecond$ and $\tilderhotsecond$ are two second order deformations of $\rhotfirst$, then the equations for the corresponding $w$ and $\tilde w$ are such that $w - \tilde w$ is a $(\rho_0, \tilde{k}-k)$-equivariant \emph{first order} deformation, hence we apply theorem \ref{thm:firstorder}. Now fix $\rhotsecond$, and suppose that there exists a second order deformation $(v, w)$ of $f$. Let $v'$ be any other $\rhotfirst$-equivariant harmonic first order deformation of $f$. Then there exists a $\xi \in \h$ such that $v = \piTN(f, F)$ and $v' = \piTN(f, F + \xi)$. One checks easily that
 $$
 w' = w + 2[\Fk, \xip] + [\xik, \xip]
 $$
 makes $(v', w')$ into a $\rhotsecond$-equivariant harmonic second order deformation. This concludes the proof that Definition \ref{defn:deformability} is well posed.
 
 In the following, suppose that $G$ is a complex algebraic group. Recall that in this case multiplication by $i$ anticommutes with adjunction, since $i [\k] = [\p]$. Then we have:
 \begin{prop}\label{prop:complexobstructions}
  Let $G$ be a complex group and consider the two first order deformations of $\rho_0$ given by $\rhotfirst = (\rho_0, c)$ and $\tilderhotfirst = (\rho_0, ic)$. Then $f$ is $\C$-deformable along $\rhotfirst$ if and only if it is along $\tilderhotfirst$. Furthermore, this is equivalent to $f$ being deformable \emph{both} along $\rhotfirst$ and $\tilderhotfirst$.
 \end{prop}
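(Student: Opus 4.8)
\emph{Equivalence of $\C$-deformability.} The whole argument runs through the orthogonality criterion of Proposition~\ref{prop:obstructionF2}: once a second order extension exists, $f$ is $\C$-deformable along $(\rho_0,c)$ exactly when $\omega^*\cont\omega\perp\h$ in $L^2$. Since $G$ is complex, multiplication by $i$ is a flat, skew-adjoint automorphism of $\Ad(\rho_0)$ interchanging $f^*[\k]$ and $f^*[\p]$; it therefore commutes with $\de$, $\Dcan$ and $\de^*$, preserves harmonic forms, and sends the harmonic representative of $c$ to that of $ic$, namely $i\omega$. Using $i[\k]=[\p]$ one finds $(i\omega)^*=-i\omega^*$ and hence
\[
 (i\omega)^*\cont(i\omega)=\omega^*\cont\omega .
\]
As moreover $\{[i\omega,i\omega]\}=-\{[\omega,\omega]\}$, the extension $\tilderhotsecond$ exists iff $\rhotsecond$ does, so Proposition~\ref{prop:obstructionF2} gives the first assertion at once.

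\emph{Reduction of deformability to the selfadjoint sector.} I would first note that $\omega^*\cont\omega$ takes values in $f^*[\p]$, hence is automatically orthogonal to $\hk\subseteq f^*[\k]$ (point~\eqref{item:globalsections} of Proposition~\ref{prop:phls}); thus $\C$-deformability is equivalent to $\omega^*\cont\omega\perp\hp$. Running the analysis of Proposition~\ref{prop:obstructionF2} in the selfadjoint sector only, deformability along $(\rho_0,c)$ amounts to solvability of the harmonicity equation $J(\beta_N(w))=\Theta$, where by Lemma~\ref{lemma:FF2vw} one has $\Theta=4\sum_j[[\beta(E_j),\Fp],\ncan_{E_j}\Fp]\in\Cinfty(f^*[\p])$; since $J$ is selfadjoint with $\ker(J)\cap f^*[\p]=\hp$, this holds iff $\Theta\perp\hp$. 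For the direction $ic$ the first order part is $\beta_N(v)=i\Fk$, and the same formula produces $\Theta'=-4\sum_j[[\beta(E_j),\Fk],\ncan_{E_j}\Fk]$, so deformability along $(\rho_0,ic)$ is $\Theta'\perp\hp$.

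\emph{The coupling identity and conclusion.} Expanding $\omega=\Dcan F+[\beta,F]$ into its $[\p]$ and $[\k]$ parts and reusing the bracket computation of Lemma~\ref{lemma:FF2vw}, I expect the pointwise identity
\[
 \omega^*\cont\omega=J\big([\Fk,\Fp]\big)-\tfrac12\big(\Theta+\Theta'\big)
\]
on $\tM$. Granting it, pairing with any $\xi\in\hp$ and integrating kills the term $J([\Fk,\Fp])$, leaving $\int_M\bigscal{\omega^*\cont\omega}{\xi}\de\Vol=-\tfrac12\int_M\bigscal{\Theta+\Theta'}{\xi}\de\Vol$. Hence deformability along \emph{both} $\rhotfirst$ and $\tilderhotfirst$ (that is, $\Theta\perp\hp$ and $\Theta'\perp\hp$) forces $\omega^*\cont\omega\perp\hp$, so $\omega^*\cont\omega\perp\h$, and Proposition~\ref{prop:obstructionF2} yields $\C$-deformability. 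Conversely $\C$-deformability produces, through Lemma~\ref{lemma:FF2vw}, a harmonic equivariant $(v,w)$ along $c$, and by the first part also along $ic$; thus $f$ is deformable along both.

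\emph{Main obstacle.} The delicate point is exactly the vanishing $\int_M\bigscal{J([\Fk,\Fp])}{\xi}\de\Vol=0$ for $\xi\in\hp$. Because $F$ is only $\rhotfirst$-equivariant, the projections $\Fk,\Fp$, and hence $[\Fk,\Fp]$, do \emph{not} descend to $M$ — they acquire the explicit cocycle correction coming from $c$ — so the naive adjunction $\bigscal{J\eta}{\xi}=\bigscal{\eta}{J\xi}$ leaves boundary contributions over a fundamental domain. The heart of the proof is to show these cancel, equivalently that the \emph{descended} section $J([\Fk,\Fp])=\omega^*\cont\omega+\tfrac12(\Theta+\Theta')$ lies in the image of $J$ acting on honest sections over $M$, so that it is orthogonal to $\h=\ker J$. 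I would establish this by the same equivariant integration-by-parts bookkeeping as in Lemma~\ref{lemma:FF2vw}, exploiting the flatness relations $\Dcan\xi=0$ and $[\beta,\xi]=0$ enjoyed by $\xi\in\hp$.
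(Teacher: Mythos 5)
Your first paragraph reproduces the paper's argument for the first assertion essentially verbatim: $(i\omega)^*\cont(i\omega)=\omega^*\cont\omega$ makes condition \eqref{item:orthogonality} of Proposition~\ref{prop:obstructionF2} invariant under $c\mapsto ic$, and the explicit lift $\tilderhotsecond=(\rho_0,ic,-k)$ settles the existence of a second order extension. Your ``coupling identity'' is also correct: expanding $\omega^{[\k]}=\ncan\Fk+[\beta,\Fp]$ and $\omega^{[\p]}=\ncan\Fp+[\beta,\Fk]$ in $\omega^*\cont\omega=-2\sum_j[\omega(E_j)^{[\k]},\omega(E_j)^{[\p]}]$ does give $J([\Fk,\Fp])-\tfrac12(\Theta+\Theta')$, and this is exactly the computation the paper performs in the disguised form $J(\eta)=2\,\omega^*\cont\omega$ for $\eta=-\big(\beta_N(w)+2i[\beta_N(\tilde v),\beta_N(v)]+\beta_N(\tilde w)\big)$.

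The gap is the one you flag yourself, and it is not cosmetic. Neither $\Theta$, nor $\Theta'$, nor $[\Fk,\Fp]$ descends to $M$: under $\gamma\in\Gamma$ each acquires correction terms coming from $c(\gamma)$, and only the combination $\omega^*\cont\omega$ is $\Gamma$-invariant. So pairing the three terms of your identity separately against $\xi\in\hp$ over $M$ is not defined, and the adjunction $\int\scal{J(\cdot)}{\xi}=\int\scal{\cdot}{J\xi}=0$ is not available term by term. Moreover your intermediate criterion ``$f$ is deformable along $(\rho_0,c)$ iff $\Theta\perp\hp$'' is neither established nor correct as stated: the unknown $w$ must also satisfy an inhomogeneous equivariance condition, so after subtracting a particular equivariant $W_0$ the genuine obstruction is the orthogonality to $\hp$ of the \emph{descended} section $\Theta-J(W_0)$, not of $\Theta$. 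The paper resolves both difficulties at once by assembling $\beta_N(w)$, $2i[\beta_N(\tilde v),\beta_N(v)]$ and $\beta_N(\tilde w)$ into the single section $-\eta$ above, checking that the equivariance defects cancel so that $\eta\in\Cinfty(M,\calV)$, and then reading your identity as $J(\eta)=2\,\omega^*\cont\omega$, which exhibits $\omega^*\cont\omega$ as an element of $\Image(J)=\h^{\perp}$ and closes the loop through Proposition~\ref{prop:obstructionF2}. The ``equivariant integration-by-parts bookkeeping'' you defer is precisely this repackaging; without it the converse direction is not proved.
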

 \begin{proof}
  Since $(i\omega)^* = -i \omega^*$, we have $(i\omega)^*\cont(i\omega) = \omega^*\cont \omega$. Thus the condition \eqref{item:orthogonality} of Proposition \ref{prop:obstructionF2} is invariant under passing from $\rhotfirst$ to $\tilderhotfirst$. Alternatively, one can explicitly compute that if $\rhotsecond = (\rho_0, c, k)$ is a second order deformation of $\rhotfirst$, then $\tilderhotsecond = (\rho_0, ic, -k)$ is one of $\tilderhotfirst$, and if $(F, F_2)$ are what we seek for the former, then $(\tilde F, \tilde F_2) = (iF, -F_2 -\eta)$ are for the latter, for any $\eta$ such that $J(\eta) = 2 \omega^*\cont \omega$.
  
  To prove that if $f$ is deformable along both directions then it is $\C$-deformable, suppose that $(v, w)$ are defined along $\rhotsecond$ and $(\tilde v, \tilde w)$ along $\tilderhotsecond$ (defined as above). A long but straightforward computation, then, proves that defining
  $$
  -\eta = \beta_N(w) + 2i \big[\beta_N(\tilde v), \beta_N(v)\big] + \beta_N(\tilde w)
  $$
  then $J(\eta) = 2 \omega^*\cont\omega$, as claimed.
 \end{proof}

 So far, we do not know of any example of a deformable $f$ which is not $\C$-deformable. There are, however, plenty of obstructed (i.e. not deformable) first order deformations:
 \begin{example}\label{ex:obstruction}
  Consider the trivial representation $\rho_0 \colon \Gamma \to \SL(n, \R)$ and a second order deformation $\rhotsecond$ such that $c$ and $k$ are strictly upper triangular. The metrics $f \colon \tM \to N = \SL(n, \R)/O(n)$ are constant maps, hence $\beta = 0$ and the canonical connection is just flat differentiation. A short computation proves that $\de^*$ is independent of the chosen metric and $\Delta = \de^*\de$ is (up to sign) the usual Laplace-Beltrami operator on $M$. Thus harmonic first and second order deformations $\beta_N(v)$, $\beta_N(w)$ are just matrices with harmonic functions as entries. If $f$, $f' = g\cdot f$ are two metrics, with $g \in G$, ones sees easily that multiplying by $g$ sends $(\rho_0, c, k)$-equivariant and harmonic deformations of $f$ to $(\rho_0, \Ad_g(c), \Ad_g(k))$-equivariant and harmonic deformations of $f'$, so we can suppose $f \equiv eK$; in this way, $\beta_N(v)$ and $\beta_N(w)$ are symmetric matrices. Writing down explicitly the equivariance conditions, we obtain, for example for the first component $w_{11}$ of $w$:
  $$
   w_{11}(\gamma \tx) = w_{11}(\tx) + \sum_{j=2}^n\lambda_{1j}(\gamma) F_{1j}(\tx) + \frac{1}{2} \sum_{j=2}^n\lambda_{1j}(\gamma)^2; \qquad \Delta(w_{11}) = 0,
  $$
  where $\lambda_{ij}(\gamma)$ are the components of $c(\gamma)$ and $F_{ij}$ is the $(i,j)$-th component of the upper triangular matrix $F$ with $\de F = \omega$ (whose symmetrization is $\beta_N(v)$). However, one sees readily that this is the same kind of equivariance as that of $\frac{1}{2}\sum_{i=1}^n F_i(\cdot)^2$, which is subharmonic. Thus the difference $\frac{1}{2}\sum_{i=1}^n F_i(\cdot)^2 - w_{11}$ is a subharmonic function defined on $M$, which is compact, hence constant. It follows that $\frac{1}{2}\sum_{i=1}^n F_i(\cdot)^2$ is harmonic, as well, which forces it to be constant and all of the $\lambda_{1j}$'s to vanish. Proceeding inductively on the other diagonal members $w_{jj}$, one eventually finds out that, unless $c = 0$, no $\rhotsecond$-equivariant harmonic second order deformation can exist.
 \end{example}
 Clearly, the same proof works with $\SL(n, \C)$ in place of $\SL(n,\R)$, but in any of these examples if $f$ is not deformable along $(\rho_0, c)$ then it is not along $(\rho_0, ic)$ as well. Some other example has to be investigated in order to find a deformable non $\C$-deformable $f$.

 \subsection{Conclusions}
 
 To conclude, we collect the main results in the following theorem, and then we investigate the conditions on $\rhotfirst$ for which every $\rho_0$-equivariant metric is deformable to the second order.
 
 \begin{theorem}\label{thm:conclusions2}
 Let $\rhotsecond = (\rho_0, c, k)$ be a second order deformation of $\rho_0$, and $f$ a harmonic metric. If one of the equivalent conditions in proposition \ref{prop:obstructionF2} holds, then the map
\begin{align*}
\piJN \colon \bigg\{ \begin{pmatrix}F\\F_2\end{pmatrix} \begin{array}{l} \rhotsecond \text{-equivariant }\\\text{of harmonic type}\end{array}\bigg\} &\longrightarrow
\bigg\{\begin{pmatrix}v\\w\end{pmatrix} \begin{array}{l} \rhotsecond \text{-equivariant}\\ \text{and harmonic}\end{array}\bigg\}\\
   \begin{pmatrix}F\\F_2\end{pmatrix} &\longmapsto
\Big(\piTN(f, F), \piTN\big(f, F_2 + [F^{[\k]}, F^{[\p]}]\big)\Big)
\end{align*}
 is surjective, and in fact every $\rhotfirst$-equivariant and harmonic first order deformation $(f, v)$ extends to a second order $\rhotsecond$-equivariant and harmonic $(f, v, w)$. When $G$ is a complex algebraic group, the condition above is equivalent to the existence of two harmonic and equivariant second order deformations, one along $(\rho_0, c)$ and the other along $(\rho_0, ic)$. In this case, up to changing $f$ to $h^{-1}f$, for some $h \in H$, the condition can be satisfied if and only if the orbit $H \cdot \omega$ is closed in $\calH^1(M, \Ad(\rho_0))$.
 \end{theorem}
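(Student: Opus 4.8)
The plan is to assemble the four assertions of the statement from the machinery already in place, reducing each to Theorem \ref{thm:firstorder} and Propositions \ref{prop:obstructionF2} and \ref{prop:complexobstructions}. First I would prove surjectivity of $\piJN$ and the extension claim together. Assuming one of the equivalent conditions of Proposition \ref{prop:obstructionF2}, Lemma \ref{lemma:FF2vw} already guarantees that $\piJN$ sends $\rhotsecond$-equivariant $(F, F_2)$ of harmonic type to $\rhotsecond$-equivariant harmonic $(v,w)$, so only surjectivity needs an argument. Given a target $(v,w)$, its first-order part $v$ is harmonic and $\rhotfirst$-equivariant, so Theorem \ref{thm:firstorder} yields an $F$ of harmonic type with $v = \piTN(f, F)$ and $\de F = \omega$; condition \eqref{item:everyFextends} then extends $F$ to an equivariant $(F, F_2)$ of harmonic type, and I set $(v, w') = \piJN(F, F_2)$. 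Because $w$ and $w'$ satisfy the same Jacobi equation (whose inhomogeneous term $4\sum_j R^N(\de f(E_j), v)\nN_{E_j}v$ depends only on $v$) and the same equivariance driven by the same $v$, the difference $w - w'$ is a harmonic deformation equivariant along the \emph{trivial} first-order deformation; hence $\beta_N(w-w') \in \hp$ by Theorem \ref{thm:firstorder}. By Lemma \ref{lemma:uniquenessFF2}, replacing $F_2$ by $F_2 + \eta$ for $\eta \in \h$ keeps $v$ fixed and shifts $w'$ by $\piTN(f,\eta)$, which sweeps out all of $\hp$; choosing $\eta$ to cancel $w - w'$ gives $w' = w$, proving surjectivity. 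The same construction, run on an arbitrary $v$ furnished by Theorem \ref{thm:firstorder}, shows that each harmonic $\rhotfirst$-equivariant $(f,v)$ extends to a harmonic $\rhotsecond$-equivariant $(f,v,w)$.

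Next, for complex $G$, I would observe that the hypothesis of Proposition \ref{prop:obstructionF2} is precisely $\C$-deformability of $f$ along $\rhotfirst$ (its condition \eqref{item:FF2exists}). Proposition \ref{prop:complexobstructions} equates this with $f$ being deformable both along $(\rho_0, c)$ and along $(\rho_0, ic)$; translated through the equivalences of Proposition \ref{prop:obstructionF2}, this is exactly the existence of the two harmonic equivariant second-order deformations asserted in the statement.

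Finally, for the closed-orbit criterion, I would start from Proposition \ref{prop:obstructionF2}\eqref{item:criticalpoint} together with the moment-map remark following it: for a \emph{fixed} harmonic metric $f$, the condition holds if and only if $\omega$ minimizes the $L^2$-norm in its $H$-orbit. Since $G_0$ is reductive by Corlette's theorem, $H = Z_G(G_0)$ is reductive and acts linearly on the finite-dimensional space $\calH^1(M, \Ad(\rho_0))$, with $H \cap K$ acting by isometries of the norm attached to $f$. The key computation is that replacing $f$ by $h^{-1}f$ transforms the harmonic representative equivariantly, $\omega \mapsto h^{-1}\cdot\omega$, and rescales the norm so that $\|h^{-1}\cdot\omega\|_{h^{-1}f} = \|\omega\|_f$; thus letting $f$ range over all harmonic metrics is exactly the freedom to move the base point along the orbit $H\cdot\omega$. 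Therefore the condition can be realized for some $f$ if and only if $H\cdot\omega$ contains a vector of minimal norm, which by the Kempf--Ness theorem \cite{KeNe78} (see also \cite{Ki84}) is equivalent to $H\cdot\omega$ being closed. The hard part will be this last step: checking carefully that deforming the harmonic metric by $H$ intertwines with the linear $H$-action on $\calH^1(M, \Ad(\rho_0))$ and scales the $L^2$-norm as claimed, so that the choice-of-metric problem collapses to the finite-dimensional Kempf--Ness picture; everything else is bookkeeping layered on Theorem \ref{thm:firstorder} and Propositions \ref{prop:obstructionF2} and \ref{prop:complexobstructions}.
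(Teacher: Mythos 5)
Your proposal is correct and follows essentially the same route as the paper, which states this theorem as a summary of Section 5 and gives no separate proof: the ingredients are exactly Theorem \ref{thm:firstorder}, Lemmas \ref{lemma:FF2vw} and \ref{lemma:uniquenessFF2}, Propositions \ref{prop:obstructionF2} and \ref{prop:complexobstructions}, and the moment-map/polystability remark, and your difference argument for surjectivity ($w-w'$ is harmonic and equivariant along the trivial first-order deformation, hence lands in $\hp$, which is swept out by shifting $F_2$ by $\h$) correctly supplies the one step the paper leaves implicit. The only slip is cosmetic: under $f\mapsto h^{-1}f$ the harmonic representative of the fixed class $\{c\}$ is unchanged while the $L^2$-norm transforms by $\|\alpha\|_{h^{-1}f}=\|\Ad_h\alpha\|_f$, so the orbit point being tested moves through the isometry rather than the representative itself; this does not affect the conclusion that varying the metric sweeps the orbit and reduces the question to Kempf--Ness.
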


 \begin{prop}
  Let $G$ be a complex algebraic group and $\rhotfirst$ a first order deformation of $\rho_0$. Then the following conditions are equivalent to every $\rho_0$-equivariant $f \colon \tM \to N$ being $\C$-deformable along $\rhotfirst$:
  \begin{enumerate}
   \item\label{item:flatness} The $\R[t]/(t^2)$-module $H^0\big(M, \Ad(\rhotfirst)\big)$ is flat;
   \item\label{item:exactsequence} There is an exact sequence in cohomology:
   $$
   0 \to H^0(M, \Ad(\rho_0)) \textrightarrow{\times t} H^0(M, \Ad(\rhotfirst)) \textrightarrow{\textnormal{mod } t} H^0(M, \Ad(\rho_0)) \to 0,
   $$
   i.e. $\h = \h'$ (otherwise said, the orbit $H \cdot \omega \subseteq \calH^1(M, \Ad(\rho_0))$ is discrete).
  \end{enumerate}
 \end{prop}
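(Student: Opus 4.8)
The plan is to prove the two equivalences \eqref{item:flatness}$\iff$\eqref{item:exactsequence} and \eqref{item:exactsequence}$\iff$(every $\rho_0$-equivariant $f$ is $\C$-deformable) separately: the first by homological algebra over $R = \R[t]/(t^2)$, the second through the moment-map picture on the finite-dimensional space $\calH^1(M,\Ad(\rho_0))$. The bridge is Proposition \ref{prop:obstructionF2}: for a \emph{fixed} harmonic metric $f$, being $\C$-deformable along $\rhotfirst$ is equivalent to $\omega$ being a critical point of the $L^2$-norm in its $H$-orbit. By Notation \ref{not:G0} every harmonic $\rho_0$-equivariant metric has the form $f = h\cdot f_0$ with $h\in H$, and the assignment $f\mapsto\omega$ intertwines the $H$-action on metrics with the adjoint action on $\calH^1(M,\Ad(\rho_0))$; hence ``every $f$ is $\C$-deformable'' translates exactly into ``every point of the orbit $H\cdot\omega$ is a critical point of the $L^2$-norm''. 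Throughout I use the standing assumption, inherited from the setup of Proposition \ref{prop:obstructionF2}, that $\rhotfirst$ extends to at least one second order deformation $\rhotsecond$, and that $H = Z_G(G_0)$ is reductive (which holds since $G$ is complex and $G_0$ is reductive).

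For \eqref{item:flatness}$\iff$\eqref{item:exactsequence}, write $P = H^0(M,\Ad(\rhotfirst))$, a finitely generated module over the local Artinian ring $R$, for which flat $=$ free. I would use the periodic resolution $\cdots\xrightarrow{t}R\xrightarrow{t}R\to R/(t)\to 0$ to identify $\textnormal{Tor}_1^R(R/(t),P) = \ker(t\colon P\to P)/tP$, so that $P$ is flat iff $\ker(t\colon P\to P) = tP$. Left-exactness of global sections applied to \eqref{eqn:exactsequence} shows $\times t\colon\h\to P$ is injective, whence $\ker(t\colon P\to P) = t\cdot\h$; on the other hand $tP = t\cdot\Image(\textnormal{mod }t) = t\cdot\h'$ by Lemma \ref{lemma:hp}. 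Thus flatness is equivalent to $\h = \h'$, which is precisely surjectivity of $\textnormal{mod }t$, i.e. exactness of the sequence in \eqref{item:exactsequence}.

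For \eqref{item:exactsequence}$\iff$(A): if $\h = \h'$ then $[\omega,\xi]=0$ for every $\xi\in\h$, so the orbit $H\cdot\omega$ is discrete; every point of a discrete orbit is trivially critical, and Proposition \ref{prop:obstructionF2} then produces a pair $(F,F_2)$ for each $f$, i.e. $\C$-deformability. For the converse I would show that pointwise criticality forces the orbit to be $0$-dimensional. Along a geodesic $s\mapsto\exp(is\eta)\cdot\omega$ with $\eta$ in the compact Lie algebra $\k_H$, the Kempf--Ness function $\|\exp(is\eta)\omega\|^2$ is convex; criticality at \emph{every} point forces its derivative to vanish identically, so it is constant, and combined with $K_H$-invariance this makes $\|\cdot\|^2$ constant on the whole orbit. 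A bounded orbit of a complex reductive group under a linear action must collapse to a single $H^\circ$-point, since the matrix coefficients $h\mapsto\langle\Ad_h\omega,e_j\rangle$ are regular functions on $H$, bounded on $H^\circ$, hence constant. Therefore $H^\circ\cdot\omega = \{\omega\}$, the orbit is discrete, $[\omega,\xi]=0$ for all $\xi\in\h$, and $\h = \h'$.

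The main obstacle is this last converse (A)$\Rightarrow$\eqref{item:exactsequence}: upgrading the pointwise criticality supplied by Proposition \ref{prop:obstructionF2} into the global statement that the orbit is discrete. This is exactly where the convexity of the Kempf--Ness function and the Liouville-type rigidity of bounded regular functions on a reductive group are needed; it is the counterpart, for ``every $f$'', of the closed-orbit criterion for ``some $f$'' recorded in Theorem \ref{thm:conclusions2}, the key difference being that it is \emph{constancy} of the norm on the orbit (rather than mere attainment of a minimum) that forces discreteness. The remaining points --- injectivity/left-exactness bookkeeping for $P$, and the identification of $\Image(\textnormal{mod }t)$ with $\h'$ --- are routine once Lemma \ref{lemma:hp} is invoked.
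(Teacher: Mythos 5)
Your proof is correct and follows the paper's architecture for most of its length: the same reduction of flatness to $\h=\h'$ via the unique nontrivial ideal of $\R[t]/(t^2)$ and Lemma \ref{lemma:hp} (you phrase it through $\textnormal{Tor}_1$ and the periodic resolution, the paper through injectivity of $(t)\otimes_A\calM\to\calM$ --- the same computation), the same use of Proposition \ref{prop:obstructionF2} to translate $\C$-deformability of every metric $h\cdot f_0$ into criticality of $\Ad_h\omega$ for the $L^2$-norm, and the same observation that criticality at every point of the orbit forces the norm to be constant on $H\cdot\omega$. Where you genuinely diverge is in extracting $\h=\h'$ from constancy of the norm. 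The paper stays inside Kempf--Ness theory: the minimal locus of the norm on the orbit is a single $(H\cap K)$-orbit, so $H=(H\cap K)\cdot H'$ and hence $\h=\hk+\h'$; combined with the fact that $\h'$ is a reductive complex subalgebra, so that $\h'=(\h'\cap\hk)\oplus(\h'\cap\hp)$ with $i(\h'\cap\hk)=\h'\cap\hp$, a short linear-algebra argument yields $\h=\h'$. You instead note that a constant-norm orbit is bounded, that the matrix coefficients $h\mapsto\scal{\Ad_h\omega}{e_j}$ are regular functions on the reductive group $H$, and that a bounded regular function on a connected complex reductive group is constant (restrict to translates of the one-parameter subgroups isomorphic to $\C$ and $\C^*$ that generate $H^\circ$); hence $H^\circ$ fixes $\omega$ outright and $[\omega,\xi]=0$ for all $\xi\in\h$. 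Your endgame avoids the structure theory of the minimal locus and the reductivity of the stabilizer $H'$, at the mild cost of invoking the algebraicity of the $H$-action on $\calH^1(M,\Ad(\rho_0))$, which the paper does establish (the action preserves harmonic forms and is the restriction of the linear algebraic action on $H^1$). Both arguments are complete; yours is arguably the more elementary way to close the loop, while the paper's keeps the whole proof within the moment-map formalism it has already set up.
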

 \begin{proof}
  We start by proving that \eqref{item:flatness}$\iff$\eqref{item:exactsequence}. The only non trivial ideal of $A = \R[t]/(t^2)$ is $(t)$, so writing $\calM = H^0(M, \Ad(\rhotfirst))$, flatness is equivalent to the injectivity of $(t)\otimes_A \calM \to \calM$, that is, to the proposition:
  $$
  \forall t\eta \in \calM, \quad t \otimes t \eta = 0 \in (t) \otimes_A \calM.
  $$
  Now $t \otimes t\eta = 0$ in $(t) \otimes_A \calM$ if and only if $\eta \in \calM$, while $t \eta \in \calM$ is equivalent to $\eta \in \h$. Thus flatness is equivalent to $\h \subseteq \calM$, as wanted. Then, if we assume point \eqref{item:exactsequence}, that is, $\h = \h'$, we have $[\omega, \xi] = 0$ for all $\xi \in \h$, hence $\scal{\omega}{[\omega, \xi]} = 0$. Thus $\omega^* \cont \omega \perp \h$, and Proposition \ref{prop:obstructionF2} implies $\C$-deformability.

  To prove the converse (namely that flatness is also necessary for the $\C$-deformability of every metric), we make use of the theory of moment maps. By hypothesis, $\omega$ must be a critical point for every metric; since the metrics are of the form $h \cdot f_0$, for a fixed $f_0$ and $h \in H$, this is tantamount to saying that $\Ad_h(\omega)$ must be critical in the norm induced by $f_0$ for every $h$, that is, that the orbit has constant $L^2$-norm. But the minimal locus is also a $(H \cap K)$-orbit, hence we get $H = (H \cap K) \cdot H'$, where $H'$ is the subgroup of $H$ fixing $\omega$ (its Lie algebra is thus $\h'$); at the level of Lie algebras, $\h = \hk + \h'$. Furthermore, we know that $\h' = (\h' \cap \hk) \oplus (\h' \cap \hp)$ is a reductive and complex Lie algebra. By elementary linear algebra, these facts together imply that $\h = \h'$.
 \end{proof}

 \section{The second variation of the energy functional}\label{sec:secondvariation}
 
 Let $\rhotsecond$ be a second order variation of $\rho_0$, and suppose that $(v, w)$ is a $\rhotsecond$-equivariant second order deformation of $f$. Remark that, given a local orthonormal frame $\{E_j\}$, the expression
 $$
 \bigscal{\nN w}{\de f} + \sum_j \Bigscal{R^N\Big(\de f(E_j), v\Big) v}{\de f(E_j)} + \big\|\nN v\big\|^2
 $$
 is $\Gamma$-invariant. The proof follows the same lines as for \eqref{eqn:firstderivative} (for details, see \cite{thesis}, Lemma 6.1.2).
 \begin{defn}
  The energy of a $\rhotsecond$-equivariant second order deformation $(f, v, w)$ is defined as:
  \begin{align}
    E(f,v,w) &= E(f) + t \int \scal{\nabla v}{\de f} + \nonumber\\
	     &\phantom{=}\ \frac{t^2}{2} \int \scal{\nN w}{\de f} + \sum_j \Bigscal{R^N\Big(\de f(E_j), v\Big) v}{\de f(E_j)} +  \big\|\nN v\big\|^2. \label{eqn:secondorderenergy}
  \end{align}
 \end{defn}
 Again, one sees directly that when $(v, w)$ are induced by a smooth family $f_t$, this coincides up to the second order with $E(f_t)$. Applying $\beta_N$ to every term in \eqref{eqn:secondorderenergy}, we find the alternative expression for the second order:
 \begin{align}
  \frac{\partial^2 E(f,v,w)}{\partial t^2}\Big|_{t=0} = \int \bigscal{\nW \beta_N(w)}{\tbeta} + \big\|[\tbeta,\beta_N(v)]\big\|^2 + \big\| \nW F^{[\p]} \big\|^2\label{eqn:secondorderenergybeta} 
 \end{align}
 \begin{prop}\label{prop:secondvariationenergy}
  Let $\rhotsecond$ be a second order deformation of $\rho_0$, and suppose that $(v, w)$ is a second order deformation of $f$ induced by a $\rhotsecond$-equivariant $(F, F_2)$ of harmonic type as in Theorem \ref{thm:conclusions2}. Set $\psi = \psi(F, F_2)$; then, the second order of the energy $E(f, v, w)$ may be written as:
  \begin{equation}\label{eqn:secondorderenergypsi}
  \frac{\partial^2 E_t}{\partial t^2}\Big|_{t=0} = \int_M \bigscal{\psi}{\beta} + \|\omega^{[\p]}\|^2.
  \end{equation}
 \end{prop}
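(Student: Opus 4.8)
The plan is to show that the two integrands \eqref{eqn:secondorderenergybeta} and \eqref{eqn:secondorderenergypsi} agree \emph{pointwise} on $\tM$, so that the proposition follows by integration with no further analytic input. First I would substitute the explicit descriptions coming from $\piJN$ and the definition of $\psi$: by Theorem \ref{thm:firstorder} and Lemma \ref{lemma:diagram} one has $\beta_N(v) = \Fp$ and $\beta_N(w) = F_2^{[\p]} + [\Fk, \Fp]$, while $\psi = \de F_2 + [\omega, F]$ with $\omega = \de F$. The only structural facts I would use are the orthogonal splitting $\tM \times \g = [\k] \oplus [\p]$ for the harmonic metric, the identity $\de = \Dcan + \ad(\tbeta)$ with $\tbeta \in [\p]$, and the fact that $\ad(\xi)$ is self-adjoint for $\xi \in [\p]$ and skew-adjoint for $\xi \in [\k]$ (Lemma \ref{lemma:propphls2}).

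Decomposing $\omega = \de F$ into $[\k]$- and $[\p]$-parts gives $\omegak = \Dcan\Fk + [\tbeta,\Fp]$ and $\omegap = \Dcan\Fp + [\tbeta,\Fk]$; the same expansion of $\psi = \de F_2 + [\omega,F]$ yields $\psi^{[\p]} = \Dcan F_2^{[\p]} + [\tbeta,F_2^{[\k]}] + [\omegak,\Fp] + [\omegap,\Fk]$. Pairing against $\tbeta \in [\p]$ only the $[\p]$-components survive, and $\bigscal{[\tbeta,F_2^{[\k]}]}{\tbeta}$ vanishes because $\ad(\tbeta)$ is self-adjoint and $[\tbeta,\tbeta]=0$, exactly as in the proof of Proposition \ref{prop:firstvariation}. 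Since $\nW\beta_N(w) = \Dcan F_2^{[\p]} + [\Dcan\Fk,\Fp] + [\Fk,\Dcan\Fp]$, the two $\Dcan F_2^{[\p]}$ contributions coincide and cancel in the difference $\bigscal{\nW\beta_N(w)}{\tbeta} - \bigscal{\psi}{\tbeta}$; in particular $F_2$ drops out entirely, which is why no integration by parts on $F_2$ is needed. Substituting the formulas for $\omegak,\omegap$ then reduces this difference to $\scal{\Xi}{\tbeta}$ with $\Xi = -[[\tbeta,\Fp],\Fp] - 2[\Dcan\Fp,\Fk] - [[\tbeta,\Fk],\Fk]$.

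The final step is to evaluate $\scal{\Xi}{\tbeta}$ using the (anti)self-adjointness of $\ad$: the three summands pair with $\tbeta$ to give $-\|[\tbeta,\Fp]\|^2$, $2\scal{\Dcan\Fp}{[\tbeta,\Fk]}$ and $\|[\tbeta,\Fk]\|^2$ respectively. On the other hand $\|\nW\Fp\|^2 - \|\omegap\|^2 = \|\Dcan\Fp\|^2 - \|\Dcan\Fp + [\tbeta,\Fk]\|^2 = -2\scal{\Dcan\Fp}{[\tbeta,\Fk]} - \|[\tbeta,\Fk]\|^2$. Adding $\scal{\Xi}{\tbeta} + \|[\tbeta,\Fp]\|^2 + \|\nW\Fp\|^2 - \|\omegap\|^2$ then cancels term by term, which is precisely the pointwise equality of the integrands of \eqref{eqn:secondorderenergybeta} and \eqref{eqn:secondorderenergypsi}; integrating over $M$ gives \eqref{eqn:secondorderenergypsi}.

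I expect the main obstacle to be purely the bookkeeping: keeping the $[\k]$/$[\p]$ projections straight and fixing the signs in the three bracket-pairing identities, each of which follows from moving a single $\ad$ across $\scal{\cdot}{\cdot}$ but requires applying the distinction between $\ad([\p])$ self-adjoint and $\ad([\k])$ skew-adjoint consistently. It is worth recording that harmonicity of $F$ and $F_2$ beyond $\de F = \omega$ is never invoked, nor is the harmonic-type equation for $w$: the matching of \eqref{eqn:secondorderenergybeta} with \eqref{eqn:secondorderenergypsi} is a purely algebraic pointwise computation, the analytic content (harmonicity of $f$, i.e.\ $\Dcan^*\tbeta = 0$) having already been spent in deriving \eqref{eqn:secondorderenergybeta}.
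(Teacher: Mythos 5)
Your proposal is correct and follows essentially the same route as the paper's proof: both reduce to the pointwise identity between the integrands of \eqref{eqn:secondorderenergybeta} and \eqref{eqn:secondorderenergypsi} by substituting $\beta_N(v)=\Fp$, $\beta_N(w)=F_2^{[\p]}+[\Fk,\Fp]$ and $\psi = \de F_2 + [\omega,F]$, killing the $F_2$-terms via $\scal{[\tbeta,F_2]}{\tbeta}=0$, and expanding $\omega = \Dcan F + [\tbeta,F]$ into $[\k]$/$[\p]$ parts (the paper merely organizes this by solving $DF_2 = \psi + [F,\omega]$ for $\nW\beta_N(w)$ rather than expanding $\psi^{[\p]}$, which is the same algebra). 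Your three bracket-pairing evaluations and the final cancellation check out, and your observation that the identity is purely algebraic and pointwise is consistent with the paper.
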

 \begin{proof}
  Recall that, using Notation \ref{not:projection}, the relation between $(v, w)$ and $(F, F_2)$ is:
  \begin{align}
   \beta_N(v) = F^{[\p]}; \quad \beta_N(w) = F_2^{[\p]} + \big[F^{[\k]}, F^{[\p]}\big]. \label{eqn:FF2vw}
  \end{align}
  Since $\omegap = \Dcan \Fp + [\tbeta, \Fk]$, we have
  $$
  \big\|\omegap\big\|^2 = \big\|\nW \Fp\big\|^2 + \big\|[\tbeta, \Fk]\big\|^2 + 2 \bigscal{\nW \Fp}{[\tbeta,\Fk]}.
  $$
  Thus, comparing \eqref{eqn:secondorderenergybeta} to \eqref{eqn:secondorderenergypsi}, we are reduced to proving that
  \begin{equation}\label{eqn:secondvariationtmp}
   \scal{\nW \beta_N(w)}{\tbeta} + \|[\tbeta, \beta_N(v)]\|^2 = \scal{\psi}{\tbeta} + \| [\tbeta, F^{[\k]}]\|^2 + 2 \scal{\nW F^{[\p]}}{[\tbeta, F^{[\k]}]}.
  \end{equation}
  Using $\Dcan F_2 + [\tbeta, F_2] = D F_2 = \psi + [F, \omega]$ and \eqref{eqn:FF2vw}:
  $$
   \nW \beta_N(w) = \Big( \psi + [F, \omega] - [\tbeta, F_2] \Big)^{[\p]} + [\nW F^{[\k]}, F^{[\p]}] + [F^{[\k]}, \nW F^{[\p]}].
  $$
  Substituting this expression into \eqref{eqn:secondvariationtmp}, and writing $\omega = \Dcan F + [\tbeta, F]$ in terms of $\Fk$ and $\Fp$ gives the result (remark also that $\scal{[\tbeta, F_2]}{\tbeta} = 0$ as in the proof of Proposition \ref{prop:firstvariation}).
 \end{proof}
  When $M = (X, \Omega)$ is a K\"ahler manifold the expression in \eqref{eqn:secondorderenergypsi} is independent of the metric chosen in its K\"ahler class, as it follows from the next lemma (which also gives a different proof for the analogous statement for the first order).
 \begin{lemma}\label{lemma:independence}
  Let $\alpha_1$, $\alpha_2$ be two $\g$-valued 1-forms on a compact K\"ahler manifold $(X, \Omega)$, and suppose that at least one of them takes values in the subbundle $[\p]$. Then their $L^2$ product
  \begin{equation}\label{eqn:L2product}
   \int_X \bigscal{\alpha_1}{\alpha_2}\ \Omega^n = \int_X \big\langle \alpha_1 \wedge *\alpha_2 \big \rangle
  \end{equation}
  is independent of the metric chosen in the K\"ahler class $\Omega$.
 \end{lemma}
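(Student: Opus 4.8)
The plan is to convert the $L^2$-product into a cup product against a power of the Kähler class, so that the only metric-dependent datum becomes $[\Omega]^{n-1}$, which is fixed by the Kähler class alone. First I would invoke the Kähler Hodge-star identity on $1$-forms: writing $n = \dim_\C X$ and $\alpha_2 = \alpha_2' + \alpha_2''$ for the type decomposition (which depends only on the fixed complex structure, not on the metric chosen in the class), one has $*\alpha_2 = \frac{i}{(n-1)!}(\alpha_2'' - \alpha_2')\wedge\Omega^{n-1}$. Substituting this into the right-hand side of \eqref{eqn:L2product} and factoring the scalar form $\Omega^{n-1}$ out of the fibrewise pairing yields
\[
\int_X\langle\alpha_1\wedge *\alpha_2\rangle = \frac{i}{(n-1)!}\int_X\eta\wedge\Omega^{n-1},\qquad \eta := \langle\alpha_1\wedge(\alpha_2''-\alpha_2')\rangle,
\]
where $\eta$ is a scalar-valued $2$-form built only from $\alpha_1$, $\alpha_2$ and the fibre metric; in particular $\eta$ does not involve the Kähler metric.

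With this reduction in hand, metric-independence becomes cohomological. Any two Kähler metrics in the class $[\Omega]$ have forms $\Omega_0,\Omega_1$ with $\Omega_1-\Omega_0 = i\partial\bar\partial\phi = \de(i\bar\partial\phi)$ for a real function $\phi$, so $\Omega_1^{n-1}-\Omega_0^{n-1}$ is $\de$-exact. Hence, provided $\de\eta = 0$, Stokes' theorem gives $\int_X\eta\wedge\Omega_1^{n-1} = \int_X\eta\wedge\Omega_0^{n-1}$; equivalently, $\int_X\eta\wedge\Omega^{n-1} = \langle[\eta]\cup[\Omega]^{n-1},[X]\rangle$ depends only on $[\Omega]$. (One may also differentiate along a path $\Omega_t$ in the class, where $\dot\Omega = i\partial\bar\partial\dot\phi$ is exact, and the derivative is killed by Stokes as soon as $\de\eta = 0$.)

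The heart of the proof, and the place where the hypothesis that $\alpha_2$ is $[\p]$-valued enters, is therefore the closedness $\de\eta = 0$. Self-adjointness of $\alpha_2$ gives $\alpha_2'' = (\alpha_2')^*$, so that $\alpha_2''-\alpha_2'$ is anti-self-adjoint, i.e. $[\k]$-valued, and only the $[\k]$-part of $\alpha_1$ survives in the fibre pairing. I would then compute $\de\eta$ through the canonical connection $\Dcan$, which is compatible with the fibre metric, so that $\de$ of the pairing distributes into $\Dcan$-covariant derivatives of the two factors; the residual terms are brackets with $\beta$, and these cancel by virtue of $\Dcan\beta = 0$, the Maurer--Cartan equation \eqref{eqn:maurercartan}, and the closedness of the harmonic factor. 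This is exactly the cancellation already carried out for $\trace\big(\omega\wedge(\theta^*-\theta)\big)$ in the example following Proposition \ref{prop:firstvariation}, now performed for a general $[\p]$-valued $\alpha_2$.

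I expect this bracket cancellation to be the main obstacle: the cohomological formalism of the first two steps is routine, whereas verifying $\de\eta = 0$ is where the self-adjointness and the flat/Maurer--Cartan structure genuinely do the work. Once $\de\eta = 0$ is established, combining it with the first step completes the proof; specializing $\alpha_1 = \omega$, $\alpha_2 = \beta$ recovers the metric-independence of \eqref{eqn:secondorderenergypsi} and of the first-order formula of Proposition \ref{prop:firstvariation}.
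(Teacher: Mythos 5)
Your first two steps coincide with the paper's proof: the Weil formula $*\alpha_2 = c\,(\alpha_2''-\alpha_2')\wedge\Omega^{n-1}$ pushes all the metric dependence into $\Omega^{n-1}$, and once the scalar $2$-form $\eta$ is known to be closed, $\int_X\eta\wedge\Omega^{n-1}$ depends only on $[\Omega]$ because $\Omega_1^{n-1}-\Omega_0^{n-1}$ is exact. But there are two problems, both located in the step you yourself identify as the crux and leave unexecuted. A small one first: since the Hodge star acts only on the form part, $*\alpha_2$ is still $[\p]$-valued, so it is the $[\p]$-part of $\alpha_1$, not the $[\k]$-part, that survives in the fibre pairing; the paper uses this to reduce to the case where \emph{both} $\alpha_i$ are $[\p]$-valued, and then $\alpha_2''-\alpha_2'$ takes values in the anti-selfadjoint part $[\k\oplus i\p]$ of $\g\otimes\C$, not in $[\k]$. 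Starting the computation of $\de\eta$ from the wrong projection of $\alpha_1$ would derail what follows.

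The more serious issue is the list of ingredients you propose for proving $\de\eta=0$: $\Dcan\beta=0$, the Maurer--Cartan equation, and ``closedness of the harmonic factor'' are properties of the particular forms $\beta$ and $\omega$ occurring in the energy formulas, whereas the lemma is stated for arbitrary $\g$-valued $1$-forms and is applied in Proposition \ref{prop:secondvariationenergy} with $\alpha_1=\psi$, which is \emph{not} closed ($\de\psi=-[\omega,\omega]$). So the cancellation you defer to would not go through in the generality required, and the analogy with the computation for $\trace(\omega\wedge(\theta^*-\theta))$ is misleading: that computation genuinely used $\de\omega=0$ and $\de\theta=[\theta,\theta^*]$. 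The paper's argument for $\de\eta=0$ is different and purely pointwise-algebraic: writing $\de\,\trace(\alpha_1\wedge\gamma)=\trace(D\alpha_1\wedge\gamma)-\trace(\alpha_1\wedge D\gamma)$ with $D=\Dcan+\ad(\beta)$, the $\Dcan$-terms drop out because $\Dcan$ preserves the selfadjoint/anti-selfadjoint splitting and the trace pairing is orthogonal across it, and the two remaining $\ad(\beta)$-terms cancel against each other by the cyclic symmetry of the trace combined with the symmetry $[\alpha',\alpha'']=[\alpha'',\alpha']$ of the bracket of $1$-forms. No closedness, harmonicity or Maurer--Cartan identity for $\alpha_1,\alpha_2$ enters. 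That algebraic cancellation is the actual content of the proof, and your proposal does not supply it.
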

 \begin{proof}
  Without loss of generality, both $\alpha_1$ and $\alpha_2$ take values in $[\p]$. Hence $\alpha_2 = \varphi + \varphi^*$, where $\varphi$ is the $(1, 0)$-part of $\alpha_2$. We get $*\alpha_2 = (\varphi^* - \varphi) \wedge \Omega^{n-1}$. Thus, up to some constant, \eqref{eqn:L2product} is $\int_X \trace(\alpha_1 \wedge (\varphi - \varphi^*)) \wedge \Omega^{n-1}$. We are only left to prove that $\trace(\alpha_1 \wedge (\varphi - \varphi^*))$ is a closed 2-form. Now $\varphi - \varphi^*$ takes values in $[\k^\C] = [\k \oplus i\p]$, the anti-selfadjoint part of $\g \otimes \C$. Then, by orthogonality:
  $$
  \trace \Big( \de \big (\alpha_1 \wedge (\varphi - \varphi^*)\big) \Big) = \trace \Big(\big[\beta, \alpha_1] \wedge (\varphi - \varphi^*)\Big) - \trace \Big( \alpha_1 \wedge \big[ \beta, \varphi - \varphi^*\big]\Big).
  $$
  Combining the cyclic symmetry of the trace with the basic symmetry for every two 1-forms $[\alpha', \alpha''] = [\alpha'', \alpha']$, this expression vanishes.
 \end{proof}
 \begin{example}
  Continuing the case $G = \C^*$ from Example \ref{ex:C*}, one sees that in that case $\psi^{\p}$ is simply $\frac{\partial^2 \tbeta_t}{\partial t^2}\big|_{t=0}$, thus equation \eqref{eqn:secondorderenergypsi} becomes the following trivial expression:
  $$
   \frac{\partial^2E(t)}{\partial t^2}\Big|_{t=0} = \int_X \dev{}{t}\Bigscal{\dev{\tbeta}{t}}{\tbeta_t}\Big|_{t=0}\de\Vol = \int_X \bigg(\Bigscal{\frac{\partial^2\tbeta_t}{\partial t^2}\Big|_{t=0}}{\tbeta} + \bigg\|\dev{\tbeta_t}{t}\Big|_{t=0}\bigg\|^2 \bigg)\de \Vol.
  $$
 \end{example}
 
 \subsection{Plurisubharmonicity of the energy}
 
 \begin{defn}
  When $G$ is a complex group, the Betti complex structure on the tangent space $Z^1(\Gamma, \g)$ to the representation space $\scrR(\Gamma, G)$ at $\rho_0$ is defined, for every 1-cocycle $c$, by $J_B(c) = ic$. The metric on the same tangent space is given by the $L^2$ scalar product of the corresponding harmonic representatives in $\calH^1(M, \Ad(\rho_0))$.
 \end{defn}

 \begin{theorem}\label{thm:kahlerpotential}
  Let $G$ be a complex group. The energy functional is a K\"ahler potential for the K\"ahler structure on the moduli space $\MB(M, G)$. In particular, the energy functional is plurisubharmonic on the smooth points of $\MB(M, G)$, and thus defines a plurisubharmonic function on the normalization of the moduli space.
 \end{theorem}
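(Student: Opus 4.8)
The plan is to compute the complex Hessian (Levi form) of $E$ on the smooth locus of $\MB(M,G)$ directly from the second variation formula \eqref{eqn:secondorderenergypsi}, and to check that it coincides with the Betti metric. First I would recall that at a smooth point $\{\rho_0\}$ the tangent space is $\calH^1(M,\Ad(\rho_0))$, the Betti complex structure is $\omega\mapsto i\omega$ (legitimate since $G$, hence $\g$, is complex), and the Kähler metric is $\|\omega\|_{L^2}^2$. Being a Kähler potential amounts to $\de\de^c E=\Omega_{\textnormal{Betti}}$; evaluated on a real direction $c$ with harmonic representative $\omega$, the Levi form equals, up to a universal positive constant, the sum of the second derivatives of $E$ along the two real directions $c$ and $J_B c=ic$, namely $\tfrac14\big(\partial_t^2+\partial_s^2\big)(E\circ u)\big|_0$ for a holomorphic disc $u$ with $u(0)=\{\rho_0\}$, $u'(0)=c$, restricted to $z=t$ and $z=is$.

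The geometric input I would exploit is that such a holomorphic disc has the shape $\rho_z=\rho_0+z\,c+z^2 q+\cdots$; restricting to the real and imaginary axes produces precisely the two paired second-order deformations $(\rho_0,c,k)$ and $(\rho_0,ic,-k)$ of Proposition \ref{prop:complexobstructions}, whose second-order terms are opposite. At a smooth point the deformations are unobstructed, so the equivalent conditions of Proposition \ref{prop:obstructionF2} hold and Proposition \ref{prop:secondvariationenergy} applies on each axis, giving $\int_M\scal{\psi}{\beta}\,\de\Vol+\|\omegap\|^2$ along $c$ and $\int_M\scal{\tilde\psi}{\beta}\,\de\Vol+\|(i\omega)^{[\p]}\|^2$ along $ic$.

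I would then add the two contributions and watch everything collapse. By Proposition \ref{prop:complexobstructions} the data for the $ic$-direction is $(\tilde F,\tilde F_2)=(iF,-F_2-\eta)$ with $J(\eta)=2\,\omega^*\cont\omega$, whence a direct computation gives $\tilde\psi=-\psi-\de\eta$; therefore $\int_M\scal{\psi}{\beta}\,\de\Vol+\int_M\scal{\tilde\psi}{\beta}\,\de\Vol=-\int_M\scal{\de\eta}{\beta}\,\de\Vol=-\int_M\scal{\eta}{\de^*\beta}\,\de\Vol=0$, the last step using that $f$ is harmonic, so $\de^*\beta=\dcan^*\beta=0$. On the other side, since multiplication by $i$ interchanges $[\k]$ and $[\p]$, one has $(i\omega)^{[\p]}=i\,\omegak$, so $\|(i\omega)^{[\p]}\|^2=\|\omegak\|^2$ and the two norm terms combine to $\|\omegap\|^2+\|\omegak\|^2=\|\omega\|^2$. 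Hence the sum of the two second derivatives is exactly $\|\omega\|_{L^2}^2$, the Levi form of $E$ equals a positive multiple of the Betti metric, and this simultaneously shows $E$ is a Kähler potential and is strictly plurisubharmonic on the smooth locus.

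Finally, to reach the normalization I would invoke continuity of $E$ (Proposition \ref{prop:Econtinuous}) together with the fact that the singular locus is an analytic subset of complex codimension at least one: a continuous function that is plurisubharmonic off such a subset is plurisubharmonic, and plurisubharmonicity is preserved under the finite normalization morphism. The main obstacle I anticipate is the first, bookkeeping step — rigorously identifying the sum of the two formal second variations with the genuine Levi form of $E$, i.e. verifying that the paired algebraic deformations $(\rho_0,c,k)$ and $(\rho_0,ic,-k)$ are indeed the restrictions to the two axes of a holomorphic disc, and that the choice-of-$k$-dependent ``acceleration'' contributions are exactly those the Levi form discards. The cancellation of the $\scal{\psi}{\beta}$ terms, which makes the final answer independent of $k$, is the technical fact that makes this identification robust.
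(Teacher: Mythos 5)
Your proposal follows essentially the same route as the paper: both evaluate $\de\de^c E$ by summing the second variations along $c$ and $ic$, use Proposition \ref{prop:complexobstructions} to pair the data as $(\tilde F,\tilde F_2)=(iF,-F_2-\eta)$ so that $\tilde\psi=-\psi-\de\eta$, kill the remaining term via $\de^*\beta=0$, and combine $\|\omegap\|^2+\|\omegak\|^2=\|\omega\|^2$ to recover the Betti metric. The paper simply cites Fornaess--Narasimhan for the passage to the normalization rather than arguing via continuity across the singular locus, but the core computation is identical.
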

 \begin{proof}
  For the consequence on the normalization, see the argument in \cite{FoNa80}, section 3. Let $\dev{}{x} \in T_{\rho_0}\Hom(\Gamma, G)$ be a tangent direction to the representation space, and define as usual $\dev{}{y} = J_B \dev{}{x}$. Then we need to prove that
  $$
  \de \de^c E \Big(\dev{}{x}, \dev{}{y}\Big) = \Big(\devd{}{x}+ \devd{}{y}\Big) E = \Big\| \dev{}{x} \Big \|^2.
  $$
  In the same notation we have used so far, this reduces our proof to showing that, for every harmonic map $f$ deformable both along $\dev{}{t}$ and along $J_B\dev{}{t}$, we have 
  $$
  \bigg(\frac{\partial^2}{\partial t^2} + \Big(J_B \dev{}{t}\Big)^2 \bigg) E = \int_M \|\omega\|^2 \de \Vol_g \geq 0.
  $$

  By Theorem \ref{thm:conclusions2}, $f$ is $\C$-deformable, hence $(v, w) = \piJN(F, F_2)$ for some $(F, F_2)$. As in the proof of Proposition \ref{prop:complexobstructions}, if $\eta$ is such that $J(\eta) = 2 \omega^*\cont \omega$, then the pair $(F', F_2') = (iF, -F_2 -\eta)$ induces the deformation along $(\rho_0, ic)$. The corresponding 1-form is $\psi(F', F_2')= -\psi -\de \eta$. Thus we compute:
  \begin{align*}
   \bigg(\frac{\partial^2}{\partial t^2} + \Big(J_B \dev{}{t}\Big)^2 \bigg) E(f_t) &= \int \scal{\psi}{\beta} + \big\|\omegap \big\|^2 + \scal{-\psi-\de \eta}{\beta} + \big\|(i\omega)^{[\p]}\big\|^2\\
             &= \int \big\|\omegap \big\|^2  - \scal{\de \eta}{\beta} + \big\|i\omegak \big\|^2 = \int \|\omega\|^2,
  \end{align*}
  since $\de^*\beta = 0$ is one way to express the harmonicity of $f$.
 \end{proof}
 \begin{remark}
  It is a well-known consequence of Uhlenbeck's compactness theorem that the energy functional is proper on $\MB(M, G)$ (see, for example, \cite{DaDoWe98}, Proposition 2.1); this fact, combined with Theorem \ref{thm:kahlerpotential}, gives another proof that $\MB(M, G)$ is Stein.
 \end{remark}

 \subsection{Positivity of the Hessian of the energy}
 Recall that Hitchin \cite{Hi87} constructed the moduli space of solutions to the self-duality equations on a Riemann surface $\Sigma$ as the quotient of the infinite dimensional affine space $\calA \times \Omega^{0,1}$, where $\calA$ is the space of flat connections on a principal bundle $P$ (modeled on $\calA^{1,0}(\Sigma, \ad(P) \otimes \C)$) and $\Omega^{0,1} = \calA^{0,1}(\Sigma, \ad(P) \otimes \C)$. Tangent vectors to the moduli space lift to pairs $(\dot{A}, \dot{\Phi})$ belonging to the associated vector space. One can see easily that in our notations a direction determined by $\omega$ corresponds to
 \begin{equation}\label{eqn:harmonictoHiggs}
 \dot{A} = \big(\omegak\big)''; \quad \dot{\Phi} = \big(\omegap\big)',
 \end{equation}
 where $\alpha = \alpha' + \alpha''$ stands for the $(1,0)$ and $(0,1)$ parts of a 1-form $\alpha$ and $\alphak$ is such that $\alphak(\chi) = \alpha(\chi)^{[\k]}$ for real tangent fields $\chi \in \Xi(\Sigma)$ (for the case of $G = \C^*$, cfr. \cite{GoXi08} \S 4.2 for this passage ``from harmonic coordinates to Higgs coordinates''). For general K\"ahler manifolds $X$, we will take \eqref{eqn:harmonictoHiggs} as a definition of $\dot{A}$, $\dot{\Phi}$ and we aim to generalize the result in \cite{Hi92}, \S 9. What we prove is the following:
 \begin{theorem}\label{thm:hessian}
  Let $G$ be a complex algebraic group, and suppose that $\rho_0 \colon \Gamma \to G$ is representation which is induced by a $\C$-VHS (i.e. a critical point of the energy). Denote by $f_0 \colon \tM \to G_0/K_0 \subset G/K$ the map induced by the period mapping, as in Notation \ref{not:G0}. Then, denoting by $(\dot{A}, \dot{\Phi})$ a tangent direction to the moduli space and by $\g = \bigoplus [\g^{-p,p}]$ the Hodge structure on $\tM \times \g$, along $\C$-deformable directions we have
  \begin{equation}\label{eqn:secondvariationHodgeHitchin}
   \devd{E(f_t)}{t}\Big|_{t=0} = 2\int_X \sum_p \Big(-p \big\| \dot{A}^{-p,p} \big\|^2 + (1-p) \big\| \dot{\Phi}^{-p,p} \big\|^2\Big)\de \Vol.
  \end{equation}
 \end{theorem}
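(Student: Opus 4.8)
The plan is to start from Proposition \ref{prop:secondvariationenergy}, which (under the standing hypothesis of $\C$-deformability, so that a pair $(F,F_2)$ of harmonic type exists and $\psi=\psi(F,F_2)$ is defined) gives $\devd{E_t}{t}\big|_{t=0} = \int_M \scal{\psi}{\beta} + \|\omegap\|^2\,\de\Vol$. Since $\rho_0$ is induced by a $\C$-VHS and $f_0$ is the period-map metric, the key input is the relation \eqref{eqn:betadec}, $\beta = D^c\gamma$, with $\gamma$ the infinitesimal generator of the circle action and $D^c = i(D''-D')$ in Simpson's operators $D'=\partial+\theta^*$, $D''=\bar\partial+\theta$. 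By Lemma \ref{lemma:independence} the whole expression is independent of the metric in the K\"ahler class, so I may compute throughout with $f_0$. I would treat the two summands separately: $\int_M\|\omegap\|^2$ will supply the weight-independent part, and $\int_M\scal{\psi}{\beta}$ will carry all the factors of $p$. Concretely, writing the target \eqref{eqn:secondvariationHodgeHitchin} as $2\int_X\sum_p\big(-p\|\dot{A}^{-p,p}\|^2 - p\|\dot{\Phi}^{-p,p}\|^2\big) + 2\int_X\sum_p\|\dot{\Phi}^{-p,p}\|^2$, I aim to match the first group with $\int_M\scal{\psi}{\beta}$ and the second with $\int_M\|\omegap\|^2$.

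The first summand is easy. As $\omega$ is real, $\omegap = \dot{\Phi} + \overline{\dot{\Phi}}$ with $\dot{\Phi} = (\omegap)'$ of type $(1,0)$ and $\overline{\dot{\Phi}}$ of type $(0,1)$; these have opposite form-type, hence are $L^2$-orthogonal, and by conjugation-invariance of the norm together with the orthogonality of Hodge pieces one gets $\int_M\|\omegap\|^2 = 2\int_X\sum_p\|\dot{\Phi}^{-p,p}\|^2$. This is exactly the ``$+1$'' inside the coefficient $(1-p)$.

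The heart of the argument is $\int_M\scal{\psi}{\beta}$. I would integrate by parts against $\gamma$ using $\beta = D^c\gamma$, then invoke the K\"ahler identity for harmonic bundles $(D^c)^* = -[\Lambda, D]$. Since $\psi$ is a $1$-form, $\Lambda\psi = 0$, so $(D^c)^*\psi = -\Lambda D\psi = \Lambda[\omega,\omega]$ by the first equation of \eqref{eqn:depsi}; hence $\int_M\scal{\psi}{\beta} = \int_X\scal{\Lambda[\omega,\omega]}{\gamma}\,\de\Vol$, reducing matters to a pointwise identity. Only the $(1,1)$-part of $[\omega,\omega]$ survives $\Lambda$, so writing $\omega' = \overline{\dot{A}}+\dot{\Phi}$ and $\omega'' = \dot{A}+\overline{\dot{\Phi}}$ I expand $\Lambda[\omega,\omega] = 2\Lambda[\omega',\omega'']$ in a unitary frame. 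Pairing with $\gamma$ and using the $\ad_\gamma$-invariance of the metric together with $[\gamma,\xi^{-p,p}] = ip\,\xi^{-p,p}$ from \eqref{eqn:defngamma}, the two mixed brackets $[\overline{\dot{A}},\overline{\dot{\Phi}}]$ and $[\dot{\Phi},\dot{A}]$ drop out: since $\dot{A}$ takes values in even Hodge weights (it lands in $[\k^\C]$) and $\dot{\Phi}$ in odd ones (it lands in $[\p^\C]$), these brackets have purely odd total weight and are therefore orthogonal to the weight-zero element $\gamma$. The two surviving diagonal brackets $[\overline{\dot{A}},\dot{A}]$ and $[\dot{\Phi},\overline{\dot{\Phi}}]$ have even weight, and for each $p$ contribute the eigenvalue $ip$ of $\ad_\gamma$ against $\|\dot{A}^{-p,p}\|^2$, resp. $\|\dot{\Phi}^{-p,p}\|^2$, assembling (after the imaginary units cancel in conjugate pairs) into $\int_M\scal{\psi}{\beta} = -2\int_X\sum_p p\big(\|\dot{A}^{-p,p}\|^2 + \|\dot{\Phi}^{-p,p}\|^2\big)$. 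Adding the two contributions yields \eqref{eqn:secondvariationHodgeHitchin}.

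The main obstacle is precisely this last pointwise step: one must keep track, simultaneously, of the form-type decomposition ($(1,0)$ versus $(0,1)$), the $[\k]/[\p]$-splitting, and the Hodge grading, and verify that $\ad_\gamma$-invariance of $S$ produces exactly the real constant $-2p$ rather than a stray cross term or a wrong sign. The vanishing of the mixed brackets by the even/odd weight parity is what forces the final answer to be diagonal in $\dot{A}$ and $\dot{\Phi}$; once that is in hand, the only remaining routine point is checking that the normalizations of the real norm $\|\omegap\|^2$ and the Hermitian norms $\|\dot{\Phi}^{-p,p}\|^2$ are compatible, accounting for the overall factor $2$.
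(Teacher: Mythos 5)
Your overall route is the paper's: reduce to $\int_M\scal{\psi}{\beta}+\|\omegap\|^2$ via Proposition \ref{prop:secondvariationenergy}, convert $\int\scal{\psi}{\beta}$ into $\int\scal{\Lambda[\omega,\omega]}{\gamma}$ using $\beta=D^c\gamma$ and the K\"ahler identities (this is exactly Lemma \ref{lemma:secondvariationenergyVHS}), identify $\int\|\omegap\|^2$ with $2\int\sum_p\|\dot{\Phi}^{-p,p}\|^2$ (your argument for this piece is fine, and cleaner than the component computation in the paper), and finish with a pointwise evaluation of $\scal{\Lambda[\omega,\omega]}{\gamma}$ against the Hodge grading. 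The two intermediate identities you target are also the correct ones.

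The gap is in the pointwise step. Your vanishing of the mixed brackets rests on the claim that $\dot{A}$ takes values only in even Hodge weights and $\dot{\Phi}$ only in odd ones. This is false with the definitions in force: here $\k=\ku\oplus i\pu$ and $\p=\pu\oplus i\ku$, so neither $[\k]$ nor $[\p]$ is a sum of Hodge pieces --- only $\ku\oplus i\ku=\bigoplus_{p\equiv 0}[\g^{-p,p}]$ and $\pu\oplus i\pu=\bigoplus_{p\equiv 1}[\g^{-p,p}]$ are. Concretely, writing $\omega(\partial/\partial x_j)=\xi_1+i\xi_2$ with $\xi_1,\xi_2\in\u$ as in the paper, one finds $\dot{A}^{-p,p}=\xi_1^p+i\eta_1^p$ for $p$ even \emph{and} $\dot{A}^{-p,p}=i\xi_2^p-\eta_2^p$ for $p$ odd, so $\dot{A}$ (and likewise $\dot{\Phi}$) has components of both parities, and $[\dot{\Phi}^{-p,p},\dot{A}^{p,-p}]$ lands in $[\g^{0,0}]$ and pairs nontrivially with $\gamma$. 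The mixed terms do disappear from the final answer, but not for weight reasons: their contribution to the Hermitian pairing $\bigscalC{\cdot}{\gamma}$ is purely imaginary (equivalently, it is antisymmetric under $p\mapsto-p$ because $\omega$ is real), so it drops out only upon taking the real part and summing over $\pm p$. This is precisely the cancellation the paper tracks by brute force with the $\xi_1,\xi_2,\eta_1,\eta_2$ bookkeeping leading to \eqref{eqn:intermediate}; your argument as written would ``prove'' the vanishing of terms that are pointwise nonzero. A secondary issue in the same step: the conjugations implicit in $\overline{\dot{A}}=(\omegak)'$ and $\overline{\dot{\Phi}}=(\omegap)''$ are taken with respect to two different real structures of $\g$ (the real forms $\k$ and $\p$ respectively), so the $\ad_\gamma$-invariance manipulation producing the real coefficient $-2p$ needs more care than ``the imaginary units cancel in conjugate pairs.'' To repair the proof you should either redo the diagonal-versus-mixed analysis using the genuinely pure-parity subspaces $\ku^\C$ and $\pu^\C$ (i.e.\ split $\omega$ by the real form $\u$ rather than by $[\k]\oplus[\p]$), or follow the paper and verify the identity through the explicit intermediate expression.
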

 \begin{cor}\label{cor:variationG0}
  If we suppose further that the deformation takes place in $G_0$ only, that is, that $\omega \in \calA^1(\g_0)$, then the following more convenient expressions are available (the last two are in terms of the weight 1 Hodge-Deligne $(P,Q)$-decomposition of $\calH^1(M, \Ad(\rho_0))$, cfr. \cite{Zu79}):
  \begin{align*}
   \devd{E(f_t)}{t}\Big|_{t=0} &= 2 \int_X \sum_p c_p \big\| (\omega')^{-p,p}\big\|^2, \quad c_p = \begin{cases} p,& \mbox{if } p \mbox{ is even},\\ 1-p, & \mbox{if } p \mbox{ is odd}. \end{cases} \\
   &= \int_X \sum_{P+Q = 1} c_P \|\omega^{(P,Q)}\|^2 = \int_X \sum_{P \textnormal{ even}} 2P \| \omega^{(P,Q)} \|^2.
  \end{align*}
 \end{cor}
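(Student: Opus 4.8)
The plan is to deduce the Corollary from Theorem \ref{thm:hessian} by rewriting its right-hand side in terms of the Hodge decomposition $\tX \times \g_\C = \bigoplus_p [\g^{-p,p}]$, using that for a $\C$-VHS the complexified $\sigma$-eigenbundles are $[\k^\C] = \bigoplus_{p \text{ even}} [\g^{-p,p}]$ and $[\p^\C] = \bigoplus_{p \text{ odd}} [\g^{-p,p}]$ (this is the content of the example on variations of Hodge structure and of the weight convention fixed around \eqref{eqn:defngamma}). Throughout I write $A_p := (\omega')^{-p,p}$ for the $(1,0)$-part of $\omega$ with coefficients in $[\g^{-p,p}]$.

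First I would record the parity constraints coming from \eqref{eqn:harmonictoHiggs}. Since $\dot A = (\omegak)''$ is valued in $[\k^\C]$, the component $\dot A^{-p,p}$ vanishes unless $p$ is even; likewise $\dot\Phi = (\omegap)'$ is valued in $[\p^\C]$, so $\dot\Phi^{-p,p} = 0$ unless $p$ is odd. Because $\omega$ is $\g$-valued it is real, whence $\omega'' = \overline{\omega'}$; as conjugation interchanges $[\g^{-p,p}]$ with $[\g^{p,-p}]$ and reverses form type, one finds $(\omega'')^{-p,p} = \overline{A_{-p}}$. Hence $\dot A^{-p,p} = \overline{A_{-p}}$ for $p$ even and $\dot\Phi^{-p,p} = A_p$ for $p$ odd, so that $\|\dot A^{-p,p}\|^2 = \|A_{-p}\|^2$ and $\|\dot\Phi^{-p,p}\|^2 = \|A_p\|^2$. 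Substituting into Theorem~\ref{thm:hessian} and reindexing the first sum by $p \mapsto -p$ (which preserves parity) collapses everything to $2\int_X \sum_p c_p \|A_p\|^2$ with $c_p = p$ for $p$ even and $c_p = 1-p$ for $p$ odd, which is the first displayed equality.

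For the second equality I would pass to the weight-$1$ Hodge--Deligne structure on $\calH^1(X, \Ad(\rho_0))$: by the $L^2$-Hodge theory for VHS coefficients \cite{Zu79}, a harmonic form of form-type $(a,b)$ with coefficients in $[\g^{-p,p}]$ contributes to the total type $(a-p, b+p)$, and the harmonic projections onto distinct total types are $L^2$-orthogonal. Thus $A_p$ has type $(1-p, p)$ and $\overline{A_{-p}} = (\omega'')^{-p,p}$ has type $(-p,1+p)$, so for $P+Q = 1$ the component $\omega^{(P,Q)}$ is the sum of the $(1,0)$-piece $A_{1-P}$ and the $(0,1)$-piece $\overline{A_P}$; as these have different form type they are orthogonal and $\|\omega^{(P,Q)}\|^2 = \|A_{1-P}\|^2 + \|A_P\|^2$. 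Since the coefficient function satisfies $c_{1-P} = c_P$, summing gives $\sum_{P+Q=1} c_P \|\omega^{(P,Q)}\|^2 = 2\sum_p c_p \|A_p\|^2$, matching the previous line after integration. For the last equality I would pair $(P,1-P)$ with its conjugate $(1-P,P)$: reality gives $\|\omega^{(P,1-P)}\| = \|\omega^{(1-P,P)}\|$, exactly one of $P$, $1-P$ is even, and $c_P + c_{1-P} = 2P$ when $P$ is even, so grouping the two terms yields $\sum_{P \text{ even}} 2P\|\omega^{(P,Q)}\|^2$.

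The only non-formal ingredient is the identification of the bigraded harmonic decomposition with the Hodge--Deligne decomposition of $H^1$, i.e.\ the appeal to \cite{Zu79} for both the total-type formula $(a-p,b+p)$ and the orthogonality of the projections; the rest is bookkeeping. The point requiring the most care is the interplay of the reindexing $p \mapsto -p$ with the two uses of reality ($\omega'' = \overline{\omega'}$ and $\omega^{(P,Q)} = \overline{\omega^{(Q,P)}}$), so that the parities entering $c_p$, the overall factor $2$, and the final restriction to even $P$ all come out consistently; I would verify this by checking the elementary identities $c_{1-P} = c_P$ and $c_P + c_{1-P} = 2P$ (for $P$ even) directly.
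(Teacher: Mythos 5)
Your proof is correct and takes essentially the same route as the paper's (much terser) argument: both deduce the corollary from Theorem \ref{thm:hessian} by combining the reality of $\omega$ --- i.e.\ the identities $(\omega'')^{-p,p}=\overline{(\omega')^{p,-p}}$ and $\omega^{(Q,P)}=\overline{\omega^{(P,Q)}}$ --- with the parity constraints forcing $\omega^{[\k]}$ (resp.\ $\omega^{[\p]}$) into $\bigoplus_{p \text{ even}}[\g^{-p,p}]$ (resp.\ $\bigoplus_{p \text{ odd}}[\g^{-p,p}]$), followed by the same Hodge--Deligne bookkeeping. The only quibble is notational: the eigenbundle you call $[\k^{\C}]$ should be the complexification of the compact part $[\k_0]$ of $[\g_0]$ (equivalently of $[\ku]$), not of the maximal compact subalgebra $\k=\ku\oplus i\pu$ of the complex group $G$ used in the proof of Theorem \ref{thm:hessian}; since $\omega$ is assumed $\g_0$-valued this does not affect the argument.
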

 \begin{proof}
  Equalities between all the stated expressions follow from the hypothesis of $\omega$ being real (i.e. in $\g_0$) by making use of
  \begin{equation*}
   \|\omega^{-p,p}\|^2 = \big\|(\omega')^{-p,p}\big\|^2 + \big\|(\omega'')^{-p,p}\big\|^2 = \big\|(\omega')^{-p,p}\big\|^2 + \big\|(\omega')^{p,-p}\big\|^2
  \end{equation*}
  and $\|(\omega'')^{p,-p}\|^2 = \|(\omega')^{-p,p}\|^2$.
 \end{proof}
 \begin{cor}
  In the moduli space $\MB(X, \PSL(2, \R))$, at every critical point the Hessian of the energy is semipositive definite.
 \end{cor}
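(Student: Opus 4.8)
The plan is to recognize $\PSL(2,\R)$ as a rank-one group of Hermitian symmetric type and to specialize the Hodge-theoretic formula of Corollary \ref{cor:variationG0}. By Theorem \ref{thm:firstvariation} every critical point $\rho_0 \in \MB(X,\PSL(2,\R))$ is induced by a complex variation of Hodge structure, so I may fix the harmonic metric $f_0$ coming from the period mapping (Notation \ref{not:G0}) and the resulting weight-$0$ Hodge decomposition $\sl(2,\C) = \bigoplus_p [\g^{-p,p}]$. The key structural input is that the symmetric space of $\PSL(2,\R)$ is the hyperbolic plane $\mathbb{H}^2$, so $\sl(2,\C)$ is a $3$-graded Lie algebra $\p^- \oplus \k_\C \oplus \p^+$ with $[\p^+,\p^+]=0$; for any nontrivial $\C$-VHS the infinitesimal generator $\gamma$ is nonzero and this grading coincides with the Hodge grading, whence $[\g^{-1,1}] = \p^+$, $[\g^{0,0}] = \k_\C$, $[\g^{1,-1}] = \p^-$ and $p$ ranges only over $\{-1,0,1\}$.

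Before invoking the formula I would check that the second variation is actually given by the Hodge-theoretic expression at $\rho_0$. At a Zariski-dense representation $G_0 = \PSL(2,\R)$, so the centralizer $H = Z_G(\Image(\rho_0))$ is the trivial center and $\h = 0$; the orthogonality condition of Proposition \ref{prop:obstructionF2} then holds vacuously, so every real harmonic class $\omega \in \calH^1(X,\Ad(\rho_0))$ is $\C$-deformable and the second-variation formula (whose real, Hermitian symmetric form is Corollary \ref{cor:variationG0}, with standing assumption $\omega \in \g_0$) is available along all tangent directions to $\MB(X,\PSL(2,\R))$.

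With $p \in \{-1,0,1\}$ I would substitute into the coefficients $c_p$ of Corollary \ref{cor:variationG0}: since $c_p = 1-p$ for odd $p$ and $c_p = p$ for even $p$, we get $c_{-1} = 2$, $c_0 = 0$ and $c_1 = 0$, all non-negative. Only the $p=-1$ summand survives, giving
$$
\devd{E(f_t)}{t}\Big|_{t=0} = 2\int_X \sum_p c_p \big\|(\omega')^{-p,p}\big\|^2 \,\de \Vol = 4\int_X \big\|(\omega')^{1,-1}\big\|^2\,\de \Vol \geq 0,
$$
which is the asserted semi-positive definiteness; the kernel of the Hessian consists exactly of the directions with $(\omega')^{1,-1}=0$, i.e.\ those that remain a variation of Hodge structure to first order.

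The step I expect to require the most care is the real-versus-complex mismatch: Theorem \ref{thm:hessian} and Corollary \ref{cor:variationG0} are phrased for a complex group, whereas $\PSL(2,\R)$ is a real form. I would resolve this by regarding $\rho_0$ inside $\PSL(2,\C)$ and observing that the restriction of $E$ to $\MB(X,\PSL(2,\R))$ is differentiated only along real classes $\omega$ with values in $\g_0 = \sl(2,\R)$ — precisely the setting $\omega \in \g_0$ of Corollary \ref{cor:variationG0} — so the Hermitian symmetric computation is unaffected. The remaining point is the non-Zariski-dense locus, where the image lies in a proper reductive subgroup (a torus or $\SO(2)$); there $H$ is larger and one reduces the second-variation computation to that subgroup, recovering non-negativity from the abelian analysis of Example \ref{ex:C*}. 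Together these cover every critical point.
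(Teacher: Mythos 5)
Your argument is essentially the paper's own: both reduce to the observation that $\dim_\C\sl(2,\C)=3$ forces the induced weight-zero Hodge grading to live in $p\in\{-1,0,1\}$, and then read off the non-negative coefficients $c_{-1}=2$, $c_0=c_1=0$ from Corollary \ref{cor:variationG0}, so the main case is correct and the extra checks you make ($\h=0$ at Zariski-dense points, hence $\C$-deformability of every direction; reality of $\omega$) are sound. The one weak spot is your treatment of the non-Zariski-dense critical points: ``reducing the second-variation computation to the subgroup'' and invoking the abelian Example \ref{ex:C*} does not account for tangent directions $\omega$ that leave the subgroup (and many of those are in fact obstructed, cf.\ Example \ref{ex:obstruction}), so that reduction is not valid as stated. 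The paper dispatches this case in one line by noting that such critical points have $E=0$, i.e.\ they are global minima of $E\geq 0$, which makes semipositivity of the Hessian there automatic; you should replace your reduction by that observation.
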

 \begin{proof}
  At every such point, either the energy is zero or we have $\gamma \in [\g^{0,0}]$ and $\theta \in \calA^{1,0}([\g^{-1,1}])$, and since $\g$ has complex dimension $3$, there can be nothing in $[\g^{-p,p}]$ for $|p| \geq 2$. Thus any expression in Corollary \ref{cor:variationG0} proves the claim.
 \end{proof}
 \begin{defn}
  Suppose $f$ to be induced by a $\C$-VHS as above, and let $v$ be a $\rhotfirst$-equivariant and harmonic first order deformation of $f$. We say that $v$ is $\C$-VHS to the first order if $\partial \beta_N(v) = \covt \theta_t\big|_{t=0} \in \calA^{1,0}([\g^{-1,1}])$.
 \end{defn}
 One can see easily that this is the first order condition for $\beta_t = (\de f_t \cdot f_t^{-1})^{[\p]}$ to remain in $\calA^{1,0}([g^{-1,1}]) \oplus \calA^{0,1}([\g^{1,-1}])$.
 \begin{cor}
  Let $\rho_0$ be induced by a $\C$-VHS, and denote by $G_0$ its real Zariski closure. If $\rho_0$ is of Hermitian type, then the Hessian is semipositive definite along $\MB(X, G_0)$. The directions along which it vanishes are exactly those which are $\C$-VHS to the first order.
 \end{cor}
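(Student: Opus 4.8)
The plan is to deduce everything from Corollary \ref{cor:variationG0}. Since we restrict to deformations within $\MB(X, G_0)$, every tangent direction at $\rho_0$ is represented by a harmonic form $\omega \in \calH^1(X, \Ad(\rho_0))$ taking values in $\g_0$, so the hypothesis $\omega \in \calA^1(\g_0)$ of that corollary is satisfied and, along $\C$-deformable directions, I may use
$$
\devd{E(f_t)}{t}\Big|_{t=0} = 2 \int_X \sum_p c_p \big\| (\omega')^{-p,p}\big\|^2 \de \Vol, \qquad c_p = \begin{cases} p,& p \text{ even},\\ 1-p, & p \text{ odd}. \end{cases}
$$
The only structural input needed is the shape of the weight-$0$ Hodge decomposition $\g_0 \otimes \C = \bigoplus_p [\g^{-p,p}]$ attached to $\rho_0$.

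First I would unpack the Hermitian hypothesis. Being of Hermitian type means precisely that the period domain of $\rho_0$ is the Hermitian symmetric space $G_0/K_0$ itself; equivalently, the isotropy $V_0$ of the period mapping (whose Lie algebra is $\v_0$, with $[\g^{0,0}] = [\v_0 \otimes \C]$) coincides with $K_0$, so that $[\g^{0,0}] = [\k \otimes \C]$ while $[\p \otimes \C] = [\g^{-1,1}] \oplus [\g^{1,-1}]$ splits into the $\pm$ eigenspaces of the invariant complex structure on $[\p]$. In particular all $[\g^{-p,p}]$ with $|p| \geq 2$ vanish; this replaces the dimension count used in the $\PSL(2,\R)$ corollary above. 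Plugging $p \in \{-1,0,1\}$ into the displayed formula kills every term except $p=-1$, since $c_0 = 0$, $c_1 = 1-1 = 0$ and $c_{-1} = 1-(-1) = 2$, whence
$$
\devd{E(f_t)}{t}\Big|_{t=0} = 4 \int_X \big\| (\omega')^{1,-1} \big\|^2 \de \Vol \geq 0.
$$
This gives semipositivity along every $\C$-deformable direction, and shows that the Hessian vanishes exactly when $(\omega')^{1,-1} = 0$ identically on $X$.

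It then remains to identify this vanishing locus with the $\C$-VHS-to-first-order directions. By Theorem \ref{thm:firstorder} I may write $v = \piTN(f, F)$ with $\de F = \omega$, so $\beta_N(v) = \Fp$, and as recorded in the proof of Proposition \ref{prop:secondvariationenergy} one has $\omegap = \Dcan \Fp + [\beta, \Fk]$. Taking $(1,0)$-parts and writing $\theta = \beta'$ for the Higgs field (so $\theta \in \calA^{1,0}([\g^{-1,1}])$) gives $\dot{\Phi} = (\omegap)' = \partial \Fp + [\theta, \Fk]$. Since $\Fk$ is a section of $[\g^{0,0}]$ and the Hodge grading is compatible with the bracket, $[\theta, \Fk]$ lands in $[\g^{-1,1}]$ and so contributes nothing to the $[\g^{1,-1}]$-component; combined with $(\omega')^{1,-1} = (\dot{\Phi})^{1,-1}$ (the projection to $[\g^{1,-1}] \subset [\p \otimes \C]$ factors through the $[\p]$-part) and the definition $\dot{\theta} = \partial\beta_N(v) = \partial\Fp$, this yields
$$
(\omega')^{1,-1} = (\dot{\Phi})^{1,-1} = (\partial \Fp)^{1,-1} = (\dot{\theta})^{1,-1}.
$$
As $\partial\Fp$ is a $(1,0)$-form valued in $[\g^{-1,1}] \oplus [\g^{1,-1}]$, the vanishing of its $[\g^{1,-1}]$-part is the same as $\dot{\theta} \in \calA^{1,0}([\g^{-1,1}])$, i.e. exactly the condition that $v$ be $\C$-VHS to the first order. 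Hence the Hessian vanishes along $v$ precisely on those directions.

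The main obstacle I expect is the Hodge-type bookkeeping: one must track $\omega'$, $\dot{\Phi}$, $\dot{\theta}$ and the correction $[\theta, \Fk]$ through the decomposition $\g_0 \otimes \C = [\g^{-1,1}] \oplus [\g^{0,0}] \oplus [\g^{1,-1}]$, and in particular verify that the bracket term is \emph{invisible} in the $[\g^{1,-1}]$-component. The other delicate point is the structural fact that Hermitian type concentrates the Hodge structure in weights $\{-1,0,1\}$ (equivalently $V_0 = K_0$); once this is in place, the analytic content is entirely furnished by Corollary \ref{cor:variationG0}, and no further computation is required.
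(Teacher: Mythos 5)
Your proposal is correct and follows essentially the same route as the paper's proof: reduce via Corollary \ref{cor:variationG0} and the weight-$\{-1,0,1\}$ Hodge structure of the Hermitian case to $4\int_X \|(\omega')^{1,-1}\|^2\,\de\Vol$, then observe that the bracket correction in $\omega = \Dcan F + [\beta,F]$ is invisible in the $[\g^{1,-1}]$-component, so that $(\omega')^{1,-1} = \partial F^{1,-1}$ and its vanishing is exactly the $\C$-VHS-to-first-order condition. Your bookkeeping through $\omegap$ and $\dot\Phi$ (bracketing against $\Fk$ rather than the full $F$) is only a cosmetic variant of the paper's argument.
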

 \begin{proof}
  By Corollary \ref{cor:variationG0}, since the Hodge structure on $\g$ has only weights $\pm1$ and $0$, the second variation is $4\int_X \|(\omega')^{1,-1}\|^2 \de \Vol$. Now $\omega(\partial_j) = \partial_j F + [\theta(\partial_j), F]$, the second summand of which must take values in $[\g^{-1,1}]\oplus[\g^{0,0}]$, hence it plays no role in $\|(\omega')^{1,-1}\|^2$. Thus
  $$
  \omega(\partial_j)^{1,-1} = \partial F^{1,-1} = 0 \iff \covt \theta_t \Big|_{t=0} = \partial \Fp \in \calA^{1,0}\big([\g^{-1,1}]\big).
  $$
 \end{proof}
 
 \begin{lemma}\label{lemma:secondvariationenergyVHS}
  Let $\rhotsecond$ be a second order deformation of a representation $\rho_0$, supposed to be induced by a $\C$-VHS. Let $f$ be induced by the period mapping, as above, and suppose that it is $\C$-deformable along $\rhotfirst$. Then, the second variation of the energy reads
  \begin{equation}\label{eqn:secondvariationenergyVHS}
   \frac{\partial^2 E(f_t)}{\partial t^2} \Big|_{t=0} = \int_X \Big(\bigscal{\Lambda [\omega,\omega]}{\gamma} + \|\omegap\|^2\Big)\de \Vol.
  \end{equation}
 \end{lemma}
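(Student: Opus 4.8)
The plan is to start from the general second-order formula of Proposition \ref{prop:secondvariationenergy}. Under the hypothesis that $f$ is $\C$-deformable along $\rhotfirst$, the pair $(v,w) = \piJN(F,F_2)$ exists and is harmonic and equivariant, so that proposition applies and gives
$$
\devd{E(f_t)}{t}\Big|_{t=0} = \int_X \bigscal{\psi}{\beta} + \big\|\omegap\big\|^2\, \de \Vol,
$$
where $\psi = \psi(F,F_2)$ satisfies the two equations \eqref{eqn:depsi}. Since the summand $\|\omegap\|^2$ already matches \eqref{eqn:secondvariationenergyVHS}, the entire task reduces to rewriting the pairing $\int_X \scal{\psi}{\beta}\,\de\Vol$ as $\int_X \scal{\Lambda[\omega,\omega]}{\gamma}\,\de\Vol$.

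To do this I would exploit that $\rho_0$ underlies a $\C$-VHS. Precisely, by \eqref{eqn:betadec} the selfadjoint part $\beta = \theta + \theta^*$ is $D^c$-exact with primitive the infinitesimal generator $\gamma$ of the circle action, namely $\beta = D^c\gamma$ with $D^c = i(D'' - D')$. Integrating by parts on the compact Kähler manifold $X$ (with respect to the real harmonic metric) then transforms the pairing into
$$
\int_X \bigscal{\psi}{\beta}\,\de\Vol = \int_X \bigscal{\psi}{D^c\gamma}\,\de\Vol = \int_X \bigscal{(D^c)^*\psi}{\gamma}\,\de\Vol,
$$
so that everything is reduced to the computation of the single term $(D^c)^*\psi$.

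The key step is to evaluate $(D^c)^*$ by means of the generalized Kähler identities for harmonic bundles recalled in Example \ref{ex:kahler} (see \cite{Si92}), which in the present sign convention read $(D')^* = i[\Lambda, D'']$ and $(D'')^* = -i[\Lambda, D']$. Taking the formal adjoint of $D^c = i(D'' - D')$ and substituting these two identities, the factors of $i$ combine so that $(D^c)^* = -[\Lambda, \de]$. Applied to the $1$-form $\psi$, the term $\de\,\Lambda\psi$ vanishes because $\Lambda\psi = 0$ on a $1$-form, leaving $(D^c)^*\psi = -\Lambda\,\de\psi$; finally the first equation of \eqref{eqn:depsi}, $\de\psi = -[\omega,\omega]$, gives $(D^c)^*\psi = \Lambda[\omega,\omega]$. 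Substituting back into the integration by parts above and combining with Proposition \ref{prop:secondvariationenergy} produces exactly \eqref{eqn:secondvariationenergyVHS}.

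I expect the only delicate point to be the bookkeeping of signs and factors of $i$: one must treat the complex operator $D^c$ as the genuinely real operator it is (so that the adjunction is taken with respect to the real harmonic metric), and apply the Kähler identities with sign conventions consistent with the normalization $\beta = iD''\gamma - iD'\gamma$ of \eqref{eqn:betadec}. Once these are fixed, the remaining manipulations are a routine integration by parts on a closed Kähler manifold.
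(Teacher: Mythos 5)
Your proposal is correct and follows the paper's own proof essentially verbatim: reduce via Proposition \ref{prop:secondvariationenergy} to the term $\int\scal{\psi}{\beta}$, substitute $\beta=D^c\gamma$ from \eqref{eqn:betadec}, integrate by parts, apply the Kähler identities to get $(D^c)^*=-[\Lambda,\de]$, and conclude with $\de\psi=-[\omega,\omega]$ from \eqref{eqn:depsi}. Nothing to add.
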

 \begin{proof}
  Thanks to the K\"ahler identities (cfr. \cite{Zu79} or \cite{Si92}), equation \eqref{eqn:betadec} and the identity $\de \psi = -[\omega, \omega]$ in \eqref{eqn:depsi}, we have
  \begin{align*}
  \int\scal{\psi}{\beta} \overset{\eqref{eqn:betadec}}{=} \int \scal{\psi}{D^c \gamma} = \int \scal{{D^c}^* \psi}{\gamma} \overset{\textnormal{KI}}{=} \int \scal{-[\Lambda, \de] \psi}{\gamma} \overset{\eqref{eqn:depsi}}{=} \int \scal{\Lambda[\omega,\omega]}{\gamma}.
  \end{align*}
 \end{proof}

 \begin{proof}[Proof of Theorem \ref{thm:hessian}]
  Denote as usual by $G_0$ the monodromy group, and denote by $\overline{G_0^{\C}}$ its complex Zariski closure. By hypothesis, $\rho_0$ being induced by a $\C$-VHS means that there is a faithful linear representation $\overline{G_0^{\C}} \hookrightarrow \GL(r, \C)$ such that the resulting vector bundle $\calV = (\tX \times \C^r)/\Gamma$ supports a $\C$-VHS; we give $\End(\calV) = (\tX \times \gl_n(\C))/\Gamma$ the induced $\C$-VHS structure of weight 0. Then we know that $G_0$ is the intersection of $\overline{G_0^{\C}}$ with the subgroup $U(p,q)$ of $\GL(r, \C)$ respecting the polarization (this is essentially the content of a theorem by Karpelevich and Mostow, \cite{Ka53,Mo55}), and that if we set $\ku = \u \cap \bigoplus_{p \equiv 0} \g^{-p,p}$ and $\pu = \u \cap \bigoplus_{p \equiv 1} \g^{-p,p}$ we obtain a Cartan decomposition for $\u$. We define $\k = \ku \oplus i\pu$ and $\p = \pu \oplus i\ku$ for the induced Cartan decomposition of $\g$. Then since $f$ takes values in $G_0/K_0$, every two out of the four terms of the decomposition $\g = \ku \oplus \pu \oplus i\ku \oplus i\pu$ are orthogonal with respect to the metric on $\tX \times \g$. This is twice the real part of the Hermitian extension $\scal{\cdot}{\cdot}_\C$ of the metric $\scal{\cdot}{\cdot}$ induced on $\u$ by $f$. Taking an adequate faithful representation, then, we can suppose without loss of generality that $\g = \gl_r(\C)$.
  
  Fix a local orthonormal frame on $X$ of the form $\{\dev{}{x_j}, \dev{}{y_j} = i\dev{}{x_j}\}$, and write for brevity (dropping the $j$ in the notation):
  $$
   \omega\Big(\dev{}{x_j}\Big) = \xi_1 + i \xi_2; \quad \omega\Big(\dev{}{y_j}\Big) = \eta_1 + i \eta_2, \quad \xi_1, \xi_2, \eta_1, \eta_2 \in \u.
  $$
  Write $\xi_k = \sum_p \xi_k^p$ for the projection $\xi_k^p \in [\g^{-p,p}]$ (which is no more in $\u$), and similarly for the $\eta$'s. 
  We aim to prove that both \eqref{eqn:secondvariationHodgeHitchin} and \eqref{eqn:secondvariationenergyVHS} reduce to:
  \begin{equation}\label{eqn:intermediate}
   \begin{split}
    \int_X \sum_p \bigg((-1)^p 4p \Im\Big(\bigscalC{\xidp}{\etadp} - \bigscalC{\xiup}{\etaup}\Big) &+ \sum_{p\equiv 1} 2 \normC{\xiup} + 2 \normC{\etaup}\\
                 &+ \sum_{p \equiv 0} 2 \normC{\xidp} + 2 \normC{\etadp} \bigg) \de \Vol.
   \end{split}
  \end{equation}
  Indeed, the first term of \eqref{eqn:secondvariationenergyVHS} equals the term in \eqref{eqn:intermediate} involving the imaginary part, and the second one the part involving the squares. The latter claim is proved explicitly by computing $\|\omegap\|^2 = \sum \|\xi_1^{[\p_0]}\|^2 + \|(i\xi_2)^{[i\k_0]}\|^2 + \|\eta_1^{[\p_0]}\|^2 + \|(i\eta_2)^{[i\k_0]}\|^2$, and using that $[\pu \oplus i\pu] = \bigoplus_{p\equiv 1} [\g^{-p,p}]$ to get the result. The former is a bit longer; first of all, $\int \scal{\Lambda [\omega, \omega]}{\gamma}$ equals:
  \begin{align}
  &\int_X \Re \Big( -2i \BigscalC{\xi_1 + \eta_2 + i \big(\xi_2 - \eta_1\big)}{\big[\gamma, \big(\xi_1 - \eta_2 + i \big(\xi_2 + \eta_1\big)\big)^*\big]}\Big)\nonumber\\
  &= 2 \int_X \bigscalC{\xi_1 + \eta_2}{\big[\gamma, \xi_2^* + \eta_1^*\big]} + \bigscalC{\xi_2 - \eta_1}{\big[\gamma, \xi_1^* - \eta_2^*\big]} \label{eqn:intermediate2}
  \end{align}
  (here we have disregarded the purely imaginary terms and the last expression is, in fact, real). Remark that, $\xi_1$ being real (that is, in $\u$), $\xi_1^* = \sum_p (-1)^{p+1} \xi_1^p$, and similarly for $\xi_2^*$, $\eta_1^*$ and $\eta_2^*$; thus:
  \begin{align*}
   \eqref{eqn:intermediate2} = 2 \int_X &\sum_p (-1)^p ip \bigg( \bigscalC{\xiup}{\xidp} + \bigscalC{\etadp}{\xidp} + \bigscalC{\xiup}{\etaup} + \bigscalC{\etadp}{\etaup}\bigg)\\
                     & + \sum_p (-1)^p ip \bigg( \bigscalC{\xidp}{\xiup} - \bigscalC{\xidp}{\etadp} - \bigscalC{\etaup}{\xiup} + \bigscalC{\etaup}{\etadp}\bigg).
  \end{align*}
  Since the result must be real, the terms $\bigscalC{\xiup}{\xidp} + \bigscalC{\xidp}{\xiup}$ (and the respective ones for the $\eta$'s) cancel out. This finishes the first half of the proof.
  
  To prove that \eqref{eqn:secondvariationHodgeHitchin} equals \eqref{eqn:intermediate}, the usual relations between Cartan and Hodge decompositions give
  \begin{align*}
   \big\|\dot{A}^{-p,p}\big\|^2 &= \bignormC{\omegak(2 \bar\partial_j)^{-p,p}} = \sum_{p \equiv 0} \bignormC{\xiup + i \etaup} + \sum_{p \equiv 1} \bignormC{i \xidp-\etadp};\\
   \big\|\dot{\Phi}^{-p,p}\big\|^2 &= \bignormC{\omegap(2\partial_j)^{-p,p}} = \sum_{p \equiv 0} \bignormC{i\xidp + \etadp} + \sum_{p \equiv 1} \bignormC{\xiup - i \etaup}.
  \end{align*}
  Substituting into \eqref{eqn:secondvariationHodgeHitchin} and using the identity $\normC{a + ib} = \normC{a} + \normC{b} + 2 \Im\scal{a}{b}_{\C}$, we get:
  \begin{align*}
   &2\sum_{p \equiv 0} -p \Big( \bignormC{\xiup} + \bignormC{\etaup} + 2 \Im\bigscalC{\xiup}{\etaup} \Big)
   + (1-p) \Big(\bignormC{\xidp} + \bignormC{\etadp} + 2 \Im\bigscalC{\etadp}{\xidp} \Big)\\
   &+ 2 \sum_{p \equiv 1} -p \Big( \bignormC{\xidp} + \bignormC{\etadp} + 2 \Im\bigscalC{\xidp}{\etadp} \Big)
    + (1-p) \Big(\bignormC{\xiup} + \bignormC{\etaup} + 2 \Im\bigscalC{\etaup}{\xiup} \Big).
  \end{align*}
  Finally, as $\xi_1$ is real, $\normC{\xiup} = \normC{\xi_1^{-p}}$, so that summing over positive and negative $p$'s cancel out (and similarly for the other norms). For the same reason, the terms of the form  $\Im\scal{\xidp}{\etadp}_{\C}$ cancel out \emph{unless} they are multiplied by $p$. Removing the vanishing terms gives \eqref{eqn:intermediate}.
 \end{proof}
 
 \begin{remark}
  Avoiding technical complications, the same ideas in this proof can be used directly to prove Corollary \ref{cor:variationG0} for variations inside $\g_0$ only; the resulting computations simplify significantly (cfr. \cite{thesis}, Proposition 6.3.4).
 \end{remark}

\bibliographystyle{alpha}
\bibliography{refs}

\newcommand{\etalchar}[1]{$^{#1}$}
\begin{thebibliography}{GPGMiR13}

\bibitem[ABC{\etalchar{+}}96]{ABCKT96}
J.~Amor{\'o}s, M.~Burger, K.~Corlette, D.~Kotschick, and D.~Toledo.
\newblock {\em Fundamental groups of compact {K}\"ahler manifolds}, volume~44
  of {\em Mathematical Surveys and Monographs}.
\newblock American Mathematical Society, Providence, RI, 1996.

\bibitem[Ber08]{Be08}
Wolfgang Bertram.
\newblock {Differential geometry, Lie groups and symmetric spaces over general
  base fields and rings.}
\newblock {\em Mem. Am. Math. Soc.}, 900:202 p., 2008.

\bibitem[BGPG03]{BrGPGo03}
Steven~B. Bradlow, Oscar Garc{\'{\i}}a-Prada, and Peter~B. Gothen.
\newblock Surface group representations and {${\rm U}(p,q)$}-{H}iggs bundles.
\newblock {\em J. Differential Geom.}, 64(1):111--170, 2003.

\bibitem[BGPG06]{BrGPGo06}
Steven~B. Bradlow, Oscar Garc{\'{\i}}a-Prada, and Peter~B. Gothen.
\newblock Maximal surface group representations in isometry groups of classical
  {H}ermitian symmetric spaces.
\newblock {\em Geom. Dedicata}, 122:185--213, 2006.

\bibitem[BH99]{BrHa99}
Martin~R. Bridson and Andr{\'e} Haefliger.
\newblock {\em Metric spaces of non-positive curvature}, volume 319 of {\em
  Grundlehren der Mathematischen Wissenschaften [Fundamental Principles of
  Mathematical Sciences]}.
\newblock Springer-Verlag, Berlin, 1999.

\bibitem[BR90]{BuRa90}
Francis~E. Burstall and John~H. Rawnsley.
\newblock {\em Twistor theory for {R}iemannian symmetric spaces}, volume 1424
  of {\em Lecture Notes in Mathematics}.
\newblock Springer-Verlag, Berlin, 1990.
\newblock With applications to harmonic maps of Riemann surfaces.

\bibitem[BS72]{BrSu72}
R.W. Brockett and H.J. Sussmann.
\newblock {Tangent bundles of homogeneous spaces are homogeneous spaces.}
\newblock {\em Proc. Am. Math. Soc.}, 35:550--551, 1972.

\bibitem[Cor88]{Co88}
Kevin Corlette.
\newblock Flat {$G$}-bundles with canonical metrics.
\newblock {\em J. Differential Geom.}, 28(3):361--382, 1988.

\bibitem[Cor91]{Co91}
Kevin Corlette.
\newblock Rigid representations of {K}\"ahlerian fundamental groups.
\newblock {\em J. Differential Geom.}, 33(1):239--252, 1991.

\bibitem[dC92]{doCa92}
Manfredo~Perdig{\~a}o do~Carmo.
\newblock {\em Riemannian geometry}.
\newblock Mathematics: Theory \& Applications. Birkh\"auser Boston Inc.,
  Boston, MA, 1992.
\newblock Translated from the second Portuguese edition by Francis Flaherty.

\bibitem[DDW98]{DaDoWe98}
G.~Daskalopoulos, S.~Dostoglou, and R.~Wentworth.
\newblock Character varieties and harmonic maps to {${\bf R}$}-trees.
\newblock {\em Math. Res. Lett.}, 5(4):523--533, 1998.

\bibitem[Don87]{Do87}
S.~K. Donaldson.
\newblock Twisted harmonic maps and the self-duality equations.
\newblock {\em Proc. London Math. Soc. (3)}, 55(1):127--131, 1987.

\bibitem[EL83]{EeLe83}
James Eells and Luc Lemaire.
\newblock {\em Selected topics in harmonic maps}, volume~50 of {\em CBMS
  Regional Conference Series in Mathematics}.
\newblock Published for the Conference Board of the Mathematical Sciences,
  Washington, DC, 1983.

\bibitem[ES64]{EeSa64}
James Eells, Jr. and J.~H. Sampson.
\newblock Harmonic mappings of {R}iemannian manifolds.
\newblock {\em Amer. J. Math.}, 86:109--160, 1964.

\bibitem[FN80]{FoNa80}
John~Erik Forn{\ae}ss and Raghavan Narasimhan.
\newblock The {L}evi problem on complex spaces with singularities.
\newblock {\em Math. Ann.}, 248(1):47--72, 1980.

\bibitem[GM87]{GoMi87}
William~M. Goldman and John~J. Millson.
\newblock Deformations of flat bundles over {K}\"ahler manifolds.
\newblock In {\em Geometry and topology ({A}thens, {G}a., 1985)}, volume 105 of
  {\em Lecture Notes in Pure and Appl. Math.}, pages 129--145. Dekker, New
  York, 1987.

\bibitem[GM88]{GoMi88}
William~M. Goldman and John~J. Millson.
\newblock The deformation theory of representations of fundamental groups of
  compact {K}\"ahler manifolds.
\newblock {\em Inst. Hautes \'Etudes Sci. Publ. Math.}, (67):43--96, 1988.

\bibitem[GPGMiR13]{GPGoMR13}
O.~Garc{\'{\i}}a-Prada, P.~B. Gothen, and I.~Mundet~i Riera.
\newblock Higgs bundles and surface group representations in the real
  symplectic group.
\newblock {\em J. Topol.}, 6(1):64--118, 2013.

\bibitem[GT77]{GiTr77}
David Gilbarg and Neil~S. Trudinger.
\newblock {\em Elliptic partial differential equations of second order}.
\newblock Springer-Verlag, Berlin, 1977.
\newblock Grundlehren der Mathematischen Wissenschaften, Vol. 224.

\bibitem[GX08]{GoXi08}
William~M. Goldman and Eugene~Z. Xia.
\newblock Rank one {H}iggs bundles and representations of fundamental groups of
  {R}iemann surfaces.
\newblock {\em Mem. Amer. Math. Soc.}, 193(904):viii+69, 2008.

\bibitem[Har67]{Ha67}
Philip Hartman.
\newblock On homotopic harmonic maps.
\newblock {\em Canad. J. Math.}, 19:673--687, 1967.

\bibitem[Hit87]{Hi87}
N.~J. Hitchin.
\newblock The self-duality equations on a {R}iemann surface.
\newblock {\em Proc. London Math. Soc. (3)}, 55(1):59--126, 1987.

\bibitem[Hit92]{Hi92}
N.~J. Hitchin.
\newblock Lie groups and {T}eichm\"uller space.
\newblock {\em Topology}, 31(3):449--473, 1992.

\bibitem[Kar53]{Ka53}
F.~I. Karpelevi{\v{c}}.
\newblock Surfaces of transitivity of a semisimple subgroup of the group of
  motions of a symmetric space.
\newblock {\em Doklady Akad. Nauk SSSR (N.S.)}, 93:401--404, 1953.

\bibitem[Kir84]{Ki84}
Frances~Clare Kirwan.
\newblock {\em Cohomology of quotients in symplectic and algebraic geometry},
  volume~31 of {\em Mathematical Notes}.
\newblock Princeton University Press, Princeton, NJ, 1984.

\bibitem[KN79]{KeNe78}
George Kempf and Linda Ness.
\newblock The length of vectors in representation spaces.
\newblock In {\em Algebraic geometry ({P}roc. {S}ummer {M}eeting, {U}niv.
  {C}openhagen, {C}openhagen, 1978)}, volume 732 of {\em Lecture Notes in
  Math.}, pages 233--243. Springer, Berlin, 1979.

\bibitem[Lin99]{Li99}
Fang-Hua Lin.
\newblock Gradient estimates and blow-up analysis for stationary harmonic maps.
\newblock {\em Ann. of Math. (2)}, 149(3):785--829, 1999.

\bibitem[Lou99]{Lo99}
E.~Loubeau.
\newblock Pluriharmonic morphisms.
\newblock {\em Math. Scand.}, 84(2):165--178, 1999.

\bibitem[Maz73]{Ma73}
E.~Mazet.
\newblock La formule de la variation seconde de l'\'energie au voisinage d'une
  application harmonique.
\newblock {\em J. Differential Geometry}, 8:279--296, 1973.

\bibitem[Mos55]{Mo55}
G.~D. Mostow.
\newblock Some new decomposition theorems for semi-simple groups.
\newblock {\em Mem. Amer. Math. Soc.}, 1955(14):31--54, 1955.

\bibitem[Par11]{Pa11}
Anne Parreau.
\newblock Espaces de repr\'esentations compl\`etement r\'eductibles.
\newblock {\em J. Lond. Math. Soc. (2)}, 83(3):545--562, 2011.

\bibitem[Sam86]{Sa86}
J.~H. Sampson.
\newblock Applications of harmonic maps to {K}\"ahler geometry.
\newblock In {\em Complex differential geometry and nonlinear differential
  equations ({B}runswick, {M}aine, 1984)}, volume~49 of {\em Contemp. Math.},
  pages 125--134. Amer. Math. Soc., Providence, RI, 1986.

\bibitem[Sim92]{Si92}
Carlos~T. Simpson.
\newblock Higgs bundles and local systems.
\newblock {\em Inst. Hautes \'Etudes Sci. Publ. Math.}, (75):5--95, 1992.

\bibitem[Sim94]{Si94}
Carlos~T. Simpson.
\newblock Moduli of representations of the fundamental group of a smooth
  projective variety. {I}, {II}.
\newblock {\em Inst. Hautes \'Etudes Sci. Publ. Math.}, (80):5--79 (1995),
  1994.

\bibitem[Siu80]{Si80}
Yum~Tong Siu.
\newblock The complex-analyticity of harmonic maps and the strong rigidity of
  compact {K}\"ahler manifolds.
\newblock {\em Ann. of Math. (2)}, 112(1):73--111, 1980.

\bibitem[Spi13]{thesis}
M.~Spinaci.
\newblock Deformations of twisted harmonic maps and variation of the energy.
\newblock arXiv:1310.7694, 2013.

\bibitem[Tol12]{To12}
Domingo Toledo.
\newblock Hermitian curvature and plurisubharmonicity of energy on
  {T}eichm\"uller space.
\newblock {\em Geom. Funct. Anal.}, 22(4):1015--1032, 2012.

\bibitem[Zuc79]{Zu79}
Steven Zucker.
\newblock {Hodge theory with degenerating coefficients: $L_2$ cohomology in the
  Poincar\'e metric.}
\newblock {\em Ann. Math. (2)}, 109:415--476, 1979.

\end{thebibliography}
\end{document}